%% file: BeckChevalleyFibrations.tex
\documentclass[10pt, notitlepage]{article}
\usepackage{amsmath,amssymb,amsthm} 
\usepackage[linktocpage=true]{hyperref}
\usepackage{cleveref} 
\usepackage[dvipsnames]{xcolor}
\usepackage{geometry} 
\usepackage{lmodern} 

\newcommand*{\boldone}{\text{\usefont{U}{bbold}{m}{n}1}}
\let\mathbbalt\mathbb

\let\mathbb\mathbbalt

\usepackage{eucal} 
\usepackage[english]{babel}
\usepackage{graphicx}
\usepackage{csquotes} 
\usepackage[style=alphabetic, backend=biber]{biblatex} 
\usepackage{tikz-cd}
\usetikzlibrary{babel} 
\tikzcdset{every diagram/.style={sep={large},/tikz/baseline=0pt}}
\usepackage{enumitem} 

\setlist[enumerate]{label=(\arabic*)}

\newcommand{\R}{\mathcal{R}}
\newcommand{\Q}{\mathbb{Q}}

\newcommand{\C}{\mathcal{C}}
\newcommand{\Z}{\mathbb{Z}}
\newcommand{\F}{\mathbb{F}}

\DeclareMathOperator*{\colim}{colim}
\DeclareMathOperator{\Fun}{Fun}
\DeclareMathOperator{\Map}{Map}
\DeclareMathOperator{\id}{id}
\DeclareMathOperator{\Nm}{Nm}
\DeclareMathOperator{\LocSys}{LocSys}
\DeclareMathOperator{\Cart}{Cart}
\DeclareMathOperator{\ev}{ev}
\DeclareMathOperator{\coker}{coker}

\DeclareMathOperator{\Sp}{Sp}
\DeclareMathOperator{\Str}{Str}
\DeclareMathOperator{\Un}{Un}

\renewcommand{\P}{\mathcal{P}}
\renewcommand{\to}{\longrightarrow}

\newcommand{\Ab}{\mathsf{Ab}}

\newcommand{\Grpd}{\mathsf{Grpd}}
\newcommand{\Cat}{\mathsf{Cat}}

\newcommand{\Vect}{\mathsf{Vect}}

\newcommand{\op}{\mathrm{op}}
\newcommand{\X}{\mathcal{X}}
\newcommand{\Y}{\mathcal{Y}}

\let\paragraph\S
\renewcommand{\S}{\mathcal{S}}
\newcommand{\D}{\mathcal{D}}
\newcommand{\E}{\mathcal{E}}

\newcommand{\wt}{\widetilde} 

\title{Beck-Chevalley Fibrations}
\author{Thomas H. Surlykke}
\date{}

\usepackage{soul} 
\usepackage[perpage]{footmisc}

\hypersetup{
    colorlinks,
    linkcolor={red!50!black},
    citecolor={blue!50!black},
    urlcolor={blue!80!black}
}
\theoremstyle{plain}
\newtheorem*{theorem*}{Theorem}
\newtheorem{theorem}{Theorem}[section]
\newtheorem{proposition}[theorem]{Proposition}
\newtheorem{lemma}[theorem]{Lemma}
\newtheorem{corollary}[theorem]{Corollary}
\theoremstyle{definition}
\newtheorem{definition}[theorem]{Definition}
\newtheorem{notation}[theorem]{Notation}
\newtheorem{construction}[theorem]{Construction}
\newtheorem{remark}[theorem]{Remark}
\newtheorem{example}[theorem]{Example}

\definecolor{darkgreen}{RGB}{0, 120, 0}
\renewcommand{\d}{\textcolor{darkgreen}}
\renewcommand{\L}{\mathcal{L}}

\crefname{equation}{diagram}{diagrams}
\Crefname{equation}{Diagram}{Diagrams}
\crefformat{equation}{diagram (#2#1#3)}
\Crefformat{equation}{Diagram (#2#1#3)}

\begin{document}
\maketitle
\linespread{1.05}
\input{abstract.tex}

\tableofcontents

\section{Introduction}
\input{introduction/introduction.tex}
\clearpage

\section{Preliminaries}
\subsection{The Straightening Theorem} \label{sec:straightening}
\input{definitions/straightening.tex}
\subsection{Beck-Chevalley Fibrations and Ambidexterity}
\input{definitions/definitions.tex}

\clearpage

\section{The Norm Square}
\subsection{Beck-Chevalley Transformations}
\input{normSquare/bcMaps.tex}

\subsection{The Norm Square Theorem}
\input{normSquare/normMap.tex}
\subsection{Base Change of Beck-Chevalley Fibrations}
\input{normSquare/baseChange.tex}

\clearpage
\section{Applications}
\input{applications/normApplications.tex}

\emergencystretch=1em
\clearpage
\printbibliography[heading=bibintoc]
\end{document}

%% file: abstract.tex
\begin{abstract}
  We extend the theory of ambidexterity developed by M.J. Hopkins and J. Lurie
  by proving commutativity of the \emph{norm square} induced from a weakly
  ambidextrous morphism by two Beck-Chevalley fibrations that are associated by a
  functor. By showing how ambidexterity is preserved under base change of
  Beck-Chevalley fibrations, we demonstrate that our result is a generalization
  of the naturality property of the norm shown by M.J. Hopkins and J. Lurie.
  Furthermore, we demonstrate how our generalization implies two specific
  results previously shown by S. Carmeli, T. M. Schlank, and L. Yanovski,
  namely, that the induced norm square of local systems, and the induced norm
  square of equivariant powers, both commute.
\end{abstract}

%% file: introduction/introduction.tex
\subsection{Background}
\subsubsection*{Representation Theory}
Given a $1$-category $\C$, we can identify an action of a group $G$ on an
object $X \in \C$ with a functor $BG \to \C$ from the \emph{classifying space}
of $G$ to $\C$, which sends the unique object of $BG$ to $X$. Thus, the
category $\Fun(BG, \C)$ is the category of objects of $\C$ equipped with a
$G$-action. Consider the case where $\C$ is the category $\Vect_{k}$ of vector
spaces over a field $k$. Given a vector space $V$ with a $G$-action, two vector
spaces related to $V$ are of interest: the subspace of \emph{$G$-invariants} of
$V$,
\[
  V^{G} = \{ x \in V \mid \forall g \in G : gx = x \},
\]
and the quotient vector space of \emph{$G$-coinvariants}, $V_{G}$, which is
defined as the quotient of $V$ by the subspace generated by elements of the
form $x - gx$ for $x \in V$ and $g \in G$. If $G$ is a finite group, there is a
well-defined natural map
\[
  \Nm_{G} \colon V_{G} \to V^{G}
\]
from the coinvariants to the invariants, given by $x \mapsto \sum_{g \in G}
gx$. We refer to this map as the \emph{norm map} associated to $V$. If $k$ has
characteristic 0, such as $\Q$, then $\Nm_{G}$ is an equivalence. However, in
general, this norm map is not an equivalence.

To make this construction functorial, consider the functor $\Fun(-, \C) \colon
\Grpd^{\op} \to \Cat$, which takes a groupoid $X$ to the functor category
$\Fun(X, \C)$. Applying this functor to the projection $p \colon BG \to *$, we
obtain the precomposition functor $p^{*} \colon \C \to \Fun(BG, \C)$, which
sends an object of $\C$ to the same object, equipped with the trivial action of
$G$. If $\C$ admits limits and colimits, then $p^{*}$ admits a left adjoint
$p_{!}$ and a right adjoint $p_{*}$ by left and right Kan extension,
respectively. As Kan extensions along the projection to a point are given by
computing the (co)limit, we for each $X \in \Fun(BG, \C)$ have $p_{!}(X) =
\colim_{BG} X$ and $p_{*}(X) = \lim_{BG} X$. In the case where $\C =
\Vect_{k}$, we for a vector space $V$ equipped with a $G$-action obtain
$p_{!}(X) = V_{G}$ and $p^{*}(X) = V^{G}$.

This construction relates to \emph{Tate cohomology}. If $\C$ is an abelian
category, then $p_{!}$ is a \emph{right exact functor} and $p_{*}$ is a
\emph{left exact functor} as a consequence of these functors being left and
right adjoints respectively. The functors are not in general exact, so it is
natural to consider the \emph{right derived functors} of $p_{!}$
\[
  H_i(G; -) \colon \Fun(BG, \C) \to \C,
\]
and the \emph{left derived functors} of $p_{*}$
\[
  H^{i}(G; -) \colon \Fun(BG, \C) \to \C,
\]
for each integer $i\geq 0$. Letting $\Ab$ denote the category of abelian groups,
note that for a group $G$, the functor category $\Fun(BG, \Ab)$ is canonically
equivalent to the category of $G$-modules. Thus, for $\C=\Ab$, this construction
provides the definitions of \emph{group homology} and \emph{group cohomology}.
Now, as before, let $\C$ be the category $\Vect_{k}$ of $k$-vector spaces. A
vector space with a $G$ action is in particular a $G$-module, so the derived
functors also define group (co)homology in this situation. As for any
$G$-module $V$ the $0$th derived functors are given by $H_{0}(G; V) \cong
V_{G}$ and $H^{0}(G;V) \cong V^{G}$, we see as above that we have a norm map
$\Nm_{G} \colon H_{0}(G; V) \to H^{0}(G; V)$ when $G$ is a finite group. This
relation between the $0$th homology and cohomology groups leads to the
definition of \emph{Tate cohomology}, which for any finite group $G$ is given
by
\[
  \widehat{H}^{i}(G; V) =
  \begin{cases}
    H^{n}(G; V)     &   n \geq 1 \\
    \coker \Nm_{G}  &   n=0 \\
    \ker \Nm_{G}    &   n=-1  \\
    H_{-(n+1)}(G;V) &   n\leq -2
  \end{cases}.
\]
In particular, the Tate cohomology groups measure the failure of the norm map
$\Nm_{G}$ being an equivalence.

All of the above constructions also apply in the $\infty$-categorical setting.
Let $\C$ be an $\infty$-category, and consider the functor $\Fun(-, \C) \colon
\S^{\op} \to \Cat_{\infty}$ from the \emph{$\infty$-category of spaces} $\S$ to
the \emph{$\infty$-category of $\infty$-categories}. Since the classifying
space $BG$ of a group $G$ is a Kan complex, we can again consider the image of
the projection $p \colon BG \to *$, which is the precomposition functor $p^{*}
\colon \C \to \Fun(BG, \C)$. As before, the left adjoint $p_{!}$ and right
adjoint $p_{*}$ of $p^{*}$, when they exist, are given by computing the colimit
and limit, respectively. Thus, we obtain $p_{!}(X) = X_{hG}$, the
\emph{homotopy orbits} of $X$, and $p_{*}(X)= X^{hG}$, the \emph{homotopy fixed
points} of $X$. When $G$ is finite, we also have a \emph{norm map} $\Nm_{G}
\colon X_{hG} \to X^{hG}$.

In this paper, we define the notion of \emph{(weak) ambidexterity} of morphisms
in a given base category, with respect to certain contravariant functors into
$\Cat_{\infty}$. When the base category is $\S$, the $\infty$-category of
spaces, and the functor is $\Fun(-, \C)$ for some $\infty$-category $\C$ which
admit limits and colimits, these definitions have the following properties: If
$f$ in $\S$ is \emph{weakly ambidextrous}, then there exists a natural
transformation $\Nm_{f} \colon f_{!} \to f_{*}$ which we call the \emph{norm
map} associated to $f$. Moreover, if $f$ is \emph{ambidextrous}, then this norm map
is an equivalence. Therefore, the question of whether $\Nm_{G} \colon V_{G} \to
V^{G}$ exists is a question of whether $p \colon BG \to *$ is weakly
ambidextrous, and the question of whether $\Nm_{G} \colon V_{G} \to V^{G}$ is
an equivalence is a question of whether $p$ is ambidextrous. In
general, for $X$ a space, we refer to an object in the $\infty$-category
$\Fun(X, \C)$ as a \emph{$\C$-valued local system on $X$}. It was shown by
Yonatan Harpaz in \cite{harpaz} that ambidexterity for local systems $\Fun(-,
\C)$ can be encoded as the action of a span category.

\subsubsection*{Chromatic Homotopy Theory}
In the following, denote the $\infty$-category of spectra by $\Sp$. Another
field of mathematics in which ambidextrous maps show up is \emph{chromatic
homotopy theory}.
Chromatic homotopy theory relates the height filtration of formal groups with
the height filtration of spectra. Consider the complex cobordism spectrum $MU$.
On the one hand, as a spectrum $MU$ represents a cohomology theory -- in fact,
$MU$ represents a complex-oriented cohomology theory. On the other, $MU$
is also universal among formal groups: The graded ring $\pi_{*}MU$ is
equivalent to the \emph{Lazard ring} $L$. The Lazard ring is universal among
formal groups in the following sense: A formal group law $f(x,y) \in R[[x]]$
over $R$ can be written as a sum $f(x,y) = \sum_{i,j\geq 0}c_{ij}x^{i}y^{j}$,
where the coefficients $c_{ij}$ has to satisfy certain conditions. The Lazard
ring $L$ is the free ring on these coefficients $\Z[c_{ij}]$ modulo the
relations the coefficients have to satisfy. Hence, picking out coefficients of
$R$ that amount to a formal group law is equivalent to picking a ring
homomorphism $L \to R$. It was shown by Quillen that $\pi_{*}MU$ is isomorphic
as a graded ring to the Lazard ring $L$ (see \cite[Lecture 7, Theorem
1]{chromaticLurie}), and with \cite[Lecture 2, Theorem 4]{chromaticLurie} it is
shown that $L$ is isomorphic to the polynomial ring $\Z[t_1, t_2, \ldots]$ in
infinitely many generators, where each $t_{i}$ has degree $2i$. In particular,
we see that forming a formal group over $R$ is the same as picking a countable
sequence of elements in $R$.

Given a formal group law $f(x,y) \in R[[x]]$, we define the \emph{height} of $f$
(for a given prime $p$) as follows: First, the \emph{$n$-series} $[n](t) \in
R[[t]]$ of $f$ is defined recursively for all $n\geq 0$ by $[0](t) = 0$ and
$[n+1](t) = f([n](t), t)$. Considering the $p$-series $[p](t)$ for a prime $p$,
write $v_{n}$ for the coefficient of $t^{p^{n}}$. As seen in \cite[Lecture 12,
Definition 13]{chromaticLurie}, we say that $f$ has \emph{height $\geq n$} if
$v_{i} = 0$ for $i<n$, and we say that $f$ has \emph{height $\leq n$} if $v_{n}
\in R$ is invertible. Thus, $f$ has \emph{height exactly $n$} if $v_{n}$ is
invertible, and $v_{i}$ vanishes for $i<n$. By the isomorphism $\pi_{*}(MU) \cong
L$, we can equivalently think of these coefficients $v_{n}$ as a certain
sequence of elements $v_{n} \in \pi_{2(p^{n}-1)}(MU)$, for each $n\geq 0$. For
any prime $p$ there exists a sequence of complex oriented cohomology theories,
$K(n)$ for each $n\geq 0$, called the \emph{Morava $K$-theories}. Each $K(n)$
is an $MU$-algebra, and the stable homotopy groups of these spectra are given
by $\pi_{*}(K(n)) = \F_{p}[v_{n}^{\pm 1}]$, where $v_{n}$ is of degree
$2(p^{n}-1)$. Comparing this generator to the height of a formal group, we see
that we should think of the $K(n)$-s as being concentrated exactly at height
$n$.

\emph{Telescopic localization} gives another candidate for localization, which
is localization at a spectrum given as the telescope on a $v_{n}$-self map of a
type $n$ finite spectrum.
For a fixed $n$, letting $T(n)$ be one such spectrum it turns out that the
localization at $T(n)$ is independent of the choices presented, so we obtain a
well-defined $\infty$-category $\Sp_{T(n)}$ of \emph{$T(n)$-local spectra}.
Letting $\Sp_{K(n)}$ denote the $\infty$-category of \emph{$K(n)$-local
spectra}, there is an inclusion of $\infty$-categories, $\Sp_{K(n)} \subseteq
\Sp_{T(n)}$. This inclusion is known to be an equivalence when $n$ is $0$ or
$1$. The question whether this inclusion is strict for larger $n$ is known as
the \emph{Telescope Conjecture}. Localizing $\pi_{*}MU$ at $K(n)$ or $T(n)$
can both be thought of as completing with respect to $v_{0}, \ldots, v_{n-1}$
and inverting $v_{n}$. As is shown in \cite{1811}, it turns out that
$\Sp_{K(n)}$ and $\Sp_{T(n)}$ are precisely the minimal and maximal examples,
respectively, of such localizations.

Let $X \in \Sp$ be a spectrum, and denote  for each $n\geq 0$ the Bousfield
localization of $X$ at $K(0) \vee \cdots \vee K(n)$ by $L_{n} X$. Thus, we
obtain a diagram
\[
  \begin{tikzcd}
    & & X \arrow[dl, ""] \arrow[d, ""] \arrow[drr, ""]& \\
    \cdots \arrow[r, ""]
    & L_{n} \arrow[r, ""]
    & L_{n-1}X \arrow[r, ""]
    & \cdots \arrow[r, ""]
    & L_0 X
  \end{tikzcd}
\]
of maps of spectra, which induces a map from $X$ to the limit of the
$L_{n}$-localizations. The \emph{Chromatic Convergence Theorem}, \cite[Lecture
32, Theorem 1]{chromaticLurie} states, that if
$X$ is a finite spectrum, then the limit of the $L_{n}$-localizations is the
$p$-localization of $X$. That is, the diagram above induces a map
\[
  X \to X_{(p)}.
\]
In particular, if $X$ is a finite $p$-local spectrum, then the limit of the
$L_{n}$-localizations induce an equivalence $X \simeq X_{(p)}$. This, combined
with the fact that for each integer $n\geq 1$, there exists a commuting square
\[
  \begin{tikzcd}
    L_{n} X	\arrow[r, ""] \arrow[d, ""]
    &L_{K(n)}X \arrow[d,""]
    \\
    L_{n-1}X	\arrow[r, ""]
    &L_{n-1}L_{K(n)}X
  \end{tikzcd},
\]
means that, in theory, a finite $p$-local spectrum $X$ can be computed by its
intermediate $K(n)$-localizations.
In practice, these computations are often not easy to do, but the theoretical
value of height localizations still stands. In \cite{ambidexLurie}, Hopkins and
Lurie prove the following result:

\begin{theorem*}[Hopkins-Lurie]
  Let $A$ be a Kan complex. Assume that for every vertex $x \in A$, the sets
  $\pi_{n}(A, x)$ are finite for every integer $n$ and trivial for $n \gg 0$.
  Let $\rho \colon A \to \Sp_{K(n)}$ be a diagram of $K(n)$-local spectra,
  indexed by $A$. In this situation, there is a canonical equivalence
  \[
    \Nm_{A} \colon \colim_{A}\rho \xrightarrow{\simeq} \lim_{A}\rho.
  \]
\end{theorem*}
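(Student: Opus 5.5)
We follow the inductive strategy of Hopkins--Lurie and prove by induction on $m \geq -2$ that every $m$-finite morphism of spaces is ambidextrous for local systems valued in $\Sp_{K(n)}$. The case $m = -2$ is trivial, since equivalences are ambidextrous. Assume every $(m-1)$-finite map is ambidextrous. For an $m$-finite map $f \colon A \to B$, the diagonal $\delta \colon A \to A \times_B A$ is $(m-1)$-finite, so its norm is invertible. This lets us construct the norm $\Nm_f \colon f_! \to f_*$ as the composite
\[
  f_! \to f_! \delta_* \delta^* \pi_2^* \simeq f_! \delta_! \delta^* \pi_2^* \simeq f_! \pi_{1*}\pi_2^*... \to f_*,
\]
obtained from the Beck--Chevalley equivalence $f^* f_! \simeq \pi_{1!}\pi_2^*$ together with $\delta^{-1}$ applied to the norm of $\pi_1$. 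In other words, we first show $f$ is \emph{weakly ambidextrous}. Ambidexterity (invertibility of $\Nm_f$) can be checked after base change to points of $B$, because norms are compatible with base change. This reduces the problem to the case $B = *$ and $A$ a connected $m$-finite space.

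For connected $A = BG$ with $G = \Omega A$ an $(m-1)$-finite loop space, we use the standard criterion: $p \colon BG \to *$ is ambidextrous iff $\Nm_p$ is an equivalence on all of $\Fun(BG, \Sp_{K(n)})$. The fiber sequence $G \to * \to BG$ and the inductive hypothesis reduce this to showing that the \emph{integral} $|A| \in \pi_0 \mathrm{End}(\mathrm{id})$ (the composite $\mathrm{id} \to p_* p^* \xrightarrow{\Nm^{-1}} p_! p^* \to \mathrm{id}$ for the relevant map) is invertible in $K(n)$-local spectra.

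For $m \geq 2$ this is automatic. The induction shows that, for higher Eilenberg--MacLane pieces, $|B^k C_p|$ is a unit once $|B^{k-1}C_p|$ is, which follows by a Postnikov-tower argument: we decompose $A$ by its Postnikov tower into fibrations with fibers $K(\pi, k)$ with $\pi$ finite, and we use that ambidexterity is stable under composition. For $\pi$ finite we further reduce to $p$-groups (away from $p$, $|G|$ is invertible since $K(n)$-local spectra are $p$-local) and then to $C_p$ by dévissage.

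The main obstacle is the base cases $k = 1, 2$: that $\Nm_{BC_p}$ is an equivalence, and that $|BC_p|$ is invertible for $B^2 C_p$. Here genuine chromatic input is needed. We would prove invertibility of $\Nm_{BC_p}$ by showing that the Tate construction $X^{tC_p}$ vanishes $K(n)$-locally. This uses that $K(n)^*(BC_p)$ is finite free over $K(n)^*$, of rank $p^n$, by the computation with the $p$-series $[p](t) = v_n t^{p^n}$ of height exactly $n$. This makes the Tate construction $v_n$-torsion and hence trivial after $K(n)$-localization. We would then show that the integral $|BC_p|$ is invertible by computing it as the class corresponding to the Euler characteristic of $K(n)^*(BC_p)$ (namely $p^n$ times a unit, or more precisely the rank computation in $K(n)^*(B^2C_p)$ via Ravenel--Wilson). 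We expect this computation of $K(n)^*(K(\Z/p,k))$ and the identification of the integral with a rank to be the hardest step. The remaining steps, including passing from local systems on $A$ to the stated $\colim_A \rho \simeq \lim_A \rho$, are formal consequences of the properties of Beck--Chevalley fibrations developed in the Preliminaries.
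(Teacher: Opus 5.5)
The paper does not prove this statement. It is quoted from Hopkins--Lurie \cite{ambidexLurie} in the introduction as motivation, and the paper's own results are purely formal, so your proposal has to be judged on its own terms. It has a genuine gap in the inductive step.

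Your formal skeleton is the standard one. It consists of induction on $m$; weak ambidexterity from the diagonal as in \Cref{def:n-ambidex} (your displayed composite does not typecheck, but that is the intended construction); reduction to fibers over points and to connected spaces; d\'evissage to $K(\Z/p,k)$; and the case $k=1$ via $K(n)$-local Tate vanishing \`a la Greenlees--Sadofsky and Kuhn. In that last step the mechanism is nilpotence of the Euler class in $K(n)^*(BC_p)\cong K(n)^*[x]/(x^{p^n})$, not $v_n$-torsion.

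The problem is the criterion you use next, because the integral argument only goes one way. Suppose $\Sp_{K(n)}$ is $(m-1)$-semiadditive, $A$ is connected and $m$-finite, and the integral $|\Omega A|$ of $e\colon *\to A$ is invertible. Then every local system on $A$ is a retract of one induced along $e$, and $\Nm_A$ is invertible because $\Nm_e$ is. This condition is sufficient but not necessary, and in $\Sp_{K(n)}$ it is unavailable exactly where you need it:
\begin{itemize}
\item $|C_p|=p$ is not a unit, since $p$ acts by zero on $K(n)$ for $n\geq 1$. This is why level $1$ needs Tate vanishing at all.
\item For $n\geq 2$, $|BC_p|$ is not a unit either. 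The $p^n$ you mention, the rank of $E_n^0(BC_p)$, computes the cardinality of the free loop space, $|BC_p\times C_p|=p\,|BC_p|$. So $|BC_p|$ maps to $p^{n-1}$ in $\pi_0E_n$, and ``$p^n$ times a unit'' would not be a unit anyway.
\item More generally $|B^kC_p|=p^{\binom{n-1}{k}}$, which is a unit only for $k\geq n$.
\end{itemize}
Consequently the claim that $|BC_p|$ is invertible is false for $n\geq 2$. Your chain ``$|B^{k-1}C_p|$ a unit $\Rightarrow$ $|B^kC_p|$ a unit'' never gets started, and ``for $m\geq 2$ this is automatic'' is wrong. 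As written, the strategy can at best handle $n\leq 1$.

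What is missing is an actual proof that $K(\Z/p,m)$ is $\Sp_{K(n)}$-ambidextrous for $2\leq m\leq n$. This is the heart of Hopkins--Lurie's argument, not a formality. They pass to Morava $E$-theory, which detects equivalences of $K(n)$-local spectra. There $E_n^0(K(\Z/p,m))$ is finite free, by Ravenel--Wilson reinterpreted through alternating powers of the Lubin--Tate $p$-divisible group. They then show that the pairing $(a,b)\mapsto\int ab$ on this algebra is perfect, which is equivalent to the $E_n$-linear norm being an isomorphism. Only once $n$-semiadditivity is established and $|B^nC_p|$ is seen to be a unit does your cardinality bootstrap legitimately take over, and then only for the levels above $n$.
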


Recalling the discussion above, by considering the functor $\Fun(-, \Sp_{K(n)})
\colon \S^{\op} \to \Cat_{\infty}$, this statement can be reformulated as
follows: \emph{For $A$ a truncated Kan complex with all homotopy groups finite,
  the projection $p \colon A \to *$ in $\S$ is ambidextrous with respect to the
functor $\Fun(-, \Sp_{K(n)})$}. By letting $A = BG$, we obtain the following
result, also stated in \cite{ambidexLurie}, which was proven separately
(and previously) by Greenlees, Hovey, and Sadofsky:

\begin{theorem*}[Greenlees-Hovey-Sadofsky]
  Let $G$ be a finite group and let $X \in \Sp_{K(n)}$ be a $K(n)$-local
  spectrum equipped with an action of $G$. In this situation, the
  $K(n)$-localized norm map
  \[
    \Nm_{G} \colon X_{hG} \to X^{hG}
  \]
  is an equivalence in the $\infty$-category of $K(n)$-local spectra
  $\Sp_{K(n)}$.
\end{theorem*}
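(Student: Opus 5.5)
Given the Hopkins--Lurie theorem stated just above, I will deduce the Greenlees--Hovey--Sadofsky statement by specialising to $A = BG$. The plan has three steps.

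First, I check that $BG$ satisfies the hypotheses. Model $BG$ as a Kan complex, for instance the nerve of the one-object groupoid associated to $G$. At its unique vertex it has $\pi_1(BG) \cong G$, which is finite, $\pi_0$ is a point, and $\pi_n(BG) = 0$ for $n \geq 2$. So $BG$ is a truncated Kan complex all of whose homotopy groups are finite, and the Hopkins--Lurie theorem applies to any diagram $\rho \colon BG \to \Sp_{K(n)}$.

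Second, I identify the data. As explained earlier in the introduction, a $K(n)$-local spectrum $X$ with a $G$-action is an object of $\Fun(BG, \Sp_{K(n)})$, i.e.\ a diagram $\rho \colon BG \to \Sp_{K(n)}$ sending the basepoint to $X$. For $p \colon BG \to *$, the functors $p_{!}$ and $p_{*}$ are left and right Kan extension along $p$, so
\[
  p_{!}\rho \simeq \colim_{BG}\rho = X_{hG}, \qquad p_{*}\rho \simeq \lim_{BG}\rho = X^{hG},
\]
where both the colimit and the limit are computed in $\Sp_{K(n)}$. In particular, $X_{hG}$ here is the $K(n)$-localization of the homotopy orbits formed in $\Sp$. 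This is why the statement speaks of the \emph{$K(n)$-localized} norm map.

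Third, and this is the only point needing care, I must check that the canonical map $\Nm_A$ of Hopkins--Lurie for $A = BG$ agrees with the classical norm $\Nm_G \colon X_{hG} \to X^{hG}$. The latter is the $K(n)$-localization of the norm $X_{hG} \to X^{hG}$ defined in $\Sp$. I would argue as follows. The Hopkins--Lurie norm for $p \colon BG \to *$ is built from the diagonal $\delta \colon BG \to BG \times BG$, whose fibers are the discrete sets $G$. The norm for $\delta$ is the tautological identification of the sum over a finite discrete fiber with the product, and $\Nm_p$ is obtained from $\Nm_\delta$ by the standard adjunction manipulation. Unwinding this on the underlying spectrum gives the transfer-type map whose composite with the projections is $\sum_{g \in G} g$, which is the classical norm. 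Since the construction uses only finite biproducts and the adjunctions, it is compatible with the localization functor $L_{K(n)} \colon \Sp \to \Sp_{K(n)}$. I therefore expect this step to follow from naturality of the norm construction with respect to localization.

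Granting this identification, the Hopkins--Lurie theorem says $\Nm_{BG}$ is an equivalence in $\Sp_{K(n)}$. Hence $\Nm_G \colon X_{hG} \to X^{hG}$ is an equivalence in $\Sp_{K(n)}$, as claimed.
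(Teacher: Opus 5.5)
Your proposal is correct and follows the paper's route: the paper obtains the statement simply by taking $A = BG$ in the Hopkins--Lurie theorem, and it leaves implicit the identification of $\Nm_{BG}$ with the ($K(n)$-localized) classical norm that you isolate in your third step. That identification is in fact supplied by the paper's own norm square in its local-systems form (\Cref{thm:1811.1.2} with $F = L_{K(n)} \colon \Sp \to \Sp_{K(n)}$): for $K(n)$-local $X$ the Beck--Chevalley maps there are equivalences, so it identifies the $K(n)$-local norm with $L_{K(n)}$ applied to the norm in $\Sp$, and only the classical description of the latter remains to be checked.
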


\subsection{Outline}
In this paper, we define and introduce the concept of (weak) ambidexterity of
morphisms with respect to certain contravariant functors into the $\infty$-category of
$\infty$-categories.

In Section 2, we begin by stating Lurie's straightening theorem \cite[Theorem
3.2.0.1]{htt} and its key properties. This allows us to study Cartesian
fibrations $\C \to \X$, in place of functors $\X^{\op} \to \Cat_{\infty}$. We
then introduce the relevant definitions for our paper, namely
\emph{Beck-Chevalley fibrations}, \emph{(weak) ambidexterity}, and the
\emph{norm map} induced by a weakly ambidextrous map, and we state and
prove essential properties relating to these definitions.

In Section 3 we prove the main theorem, \Cref{thm:norm.square}.
Following this, we show how the base change of Beck-Chevalley fibrations along
pullback-preserving functors produces Beck-Chevalley fibrations, and how
ambidexterity is preserved under this base change.

In Section 4 we demonstrate applications of \Cref{thm:norm.square}. First, we
show how it implies \cite[Proposition 4.2.1 (1)]{ambidexLurie}, upon which the
proof of \Cref{thm:norm.square} is based. Following this, we show how
it implies \cite[Theorem 3.2.3]{1811} and  \cite[Proposition 3.4.7]{1811}.

\subsection{Conventions}
We work with $\infty$-categories as \emph{quasi categories}, using the theory
developed by Lurie in \cite{htt} and \cite{ha}. Let $\C$ be an
$\infty$-category. We make the following conventions:
\begin{itemize}
  \item We say that a diagram in $\C$ \emph{commutes} if the diagram commutes
    strictly in the homotopy category $h\C$. This is what is referred to as
    ``commutes up to homotopy'' in \cite{htt} and \cite{ambidexLurie}.

  \item We say that a morphsim in $\C$ is an \emph{equivalence} if it
  is an isomorphism in the homotopy category $h\C$. This is what is referred to
  as a ``homotopy equivalence'' in \cite{ambidexLurie}. We denote an
  equivalence by the symbol $\simeq$.

  \item Given a map $q \colon \C \to \X$ of $\infty$-categories, we refer to
    the pullback $q^{-1}\{X\} = \C \times_{\X} \{X\}$ as the \emph{fiber of $q$
    over $X$}, rather than the homotopy fiber. When $q$ is clear from the
    context, we denote the fiber of $q$ over $X$ by $\C_{X}$.

  \item We let $\S$ denote the \emph{$\infty$-category of spaces}, and let
    $\Cat_{\infty}$ denote the $(\infty,2)$-category of $\infty$-categories.
\end{itemize}

\subsection{Acknowledgements}
Thank you to Shachar Carmeli for suggesting and supervising this project. You
have been very helpful, and it has been a joy working with you.

%% file: definitions/straightening.tex
A fundamental result in higher category theory is Lurie's straightening
theorem, \cite[Theorem 3.2.0.1]{htt}. We first state the following definitions
from \cite{htt}:
\begin{definition}[{\cite[Remark 2.4.1.4]{htt}}]
  Let $q \colon X \to S$ be an inner fibration of simplicial sets, and let $f
  \colon \Delta^1 \to X$ be an edge in $X$. We say that $f$ is
  \emph{$q$-Cartesian} if for every $n\geq 2$ and every commutative diagram
  \[
    \begin{tikzcd}
      \Delta^{\{n-1, n\}} \arrow[dr, "f"] \arrow[d, hookrightarrow, ""]
      & \\
      \Lambda_{n}^{n} \arrow[r, ""] \arrow[d, hookrightarrow, ""]
      & X \arrow[d, "q"]
      \\ \Delta^{n} \arrow[r, ""] \arrow[ur, dashed, ""] & S
    \end{tikzcd}
  \]
  there exists a dashed arrow as indicated, rendering the diagram commutative.
  Dually, we say that $f$ is \emph{$q$-coCartesian} if $f$ is Cartesian with
  respect to the opposite map $q^{\op} \colon X^{\op} \to S^{\op}$.
\end{definition}

\begin{remark}[]
  Assume that $X$ and $S$ are $\infty$-categories, and let a morphism $f$ in
  $X$ be $q$-Cartesian. In the case $n=2$, the property above can be stated as
  follows: If $g$ is a morphism in $X$ with the same target as $f$, and $q(g)$
  factors through $q(f)$ in $S$, then this factorization lifts to a unique
  factorization in $X$, up to equivalence.
\end{remark}

\begin{definition}[{\cite[Definition 2.4.2.1]{htt}}]
  Let $q \colon X \to S$ be a map of simplicial sets. We say that $q$ is a
  \emph{Cartesian fibration} if the following conditions are satisfied:
  \begin{enumerate}
    \item The map $q$ is an inner fibration
    \item For every edge $f \colon x \to y$ in $S$ and every vertex $\wt y$ of
      $X$ with $q(\wt y) = y$, there exists a $q$-Cartesian edge $\tilde{f}
      \colon \wt x \to \wt y$ with $q(\tilde{f}) = f$.
  \end{enumerate}
  We say that $q$ is a \emph{coCartesian fibration} if the opposite map
  $q^{\op} \colon X^{\op} \to S^{\op}$ is a Cartesian fibration. If $q$ is both
  a Cartesian fibration and a coCartesian fibration we will abbreviate and say
  that $q$ is a \emph{Cartesian and coCartesian fibration}.
\end{definition}

Let $q \colon \C \to \X$ be a Cartesian fibration of $\infty$-categories,
viewing $q$ as a map of simplicial sets. Condition (1) implies that for any
object $X \in \X$, the fiber $\C_{X}$ is an $\infty$-category. Condition (2) is
to make sure that this assignment is functorial. Informally, as seen in
\cite{straightening}, the straightening theorem states that for any
$\infty$-category $\X$, there is a \emph{straightening equivalence}, which is a
pair of mutually inverse functors
\[
  \begin{tikzcd}
    \operatorname{Str} : \Cart (\X) \arrow[r, shift left, ""]
    & \arrow[l, shift left, ""] \Fun(\X^{\op}, \Cat_{\infty}) :
    \operatorname{Un}
  \end{tikzcd},
\]
where $\Cart(\X)$ is the category of Cartesian fibrations over $\X$. Applying
$\Str$ is refered to as \emph{straightening} a Cartesian fibration, and
applying $\Un$ is refered to as \emph{unstraightening} a functor of
$\infty$-categories. In general, a functor of $\infty$-categories includes a
lot of coherence data, making functors of $\infty$-categories hard to construct
explicitly. The straightening theorem allows one to define functors between
$\infty$-categories by instead considering certain maps of simplicial sets. We
list some essential properties of the straightning equivalence:
\begin{itemize}
  \item The straightening of a Cartesian fibration $q \colon \C \to \X$ is the
    functor $\C_{(-)} \colon \X^{\op} \to \Cat_{\infty}$. On objects $\C_{(-)}$
    is given by the fiber of $q$, that is,
    \[
      X \longmapsto \C_{X}.
    \]
    On morphisms, given any map $f \colon Y \to X$ in $\X$ a functor $f^{*}
    \colon \C_{X} \to \C_{Y}$ is induced as follows: For every $\wt X \in
    \C_{X}$,  there exists a $q$-Cartesian edge $\tilde{f} \colon \wt X \to \wt
    Y$ in $\C$, since $q$ is a Cartesian fibration. Thus, $f^{*}$ is defined on
    objects by $f^{*}(\wt X) = \wt Y$, and this definition extends to higher
    simplicies as well. The fact that $\tilde{f}$ is $q$-Cartesian ensures
    that this definition is well-defined. In the other direction, the
    unstraightening of a functor $F \colon \X^{\op} \to \Cat_{\infty}$ is an
    $\infty$-categorical analog of the category of elements, that is, the
    \emph{Grothendieck construction}. Thus, informally, the domain of the
    resulting Cartesian fibration $\operatorname{Un}(F) \to \X$ has objects
    pairs $(X, x)$  where $X$ is an object in $\X$ and $x$ is an object in
    $F(X)$.

  \item Suppose that a Cartesian fibration $q \colon \C \to \X$ is also a
    coCartesian fibration. With a similar procedure, $q$ induces a covariant
    functor $\X \to \Cat_{\infty}$ which on objects again is given by $\X
    \longmapsto \C_{X}$, but sends morphisms $f \colon Y \to X$ in $\X$ to
    the lift $f_{!} \colon \C_{Y} \to \C_{X}$ instead.
    The functor $f_{!}$ has the property that it is a \emph{left adjoint}
    to $f^{*}$ (in the sense of \cite[Definition 5.2.2.1]{htt}). More
    generally, the straightening equivalence states that a functor $q \colon \C
    \to \X$ which is \emph{both} a Cartesian fibration and a coCartesian
    fibration is equivalent to giving a functor $\X \to \Cat_{\infty}$ which is
    both covariant and contravariant, and such that for any morphism $f$ in
    $\X$, the covariant image $f_{!}$ of $f$ is left adjoint to the
    contravariant image $f^{*}$ (see \cite[Corollary 5.2.2.5]{htt}).

  \item Suppose we have a diagram
    \[
      \begin{tikzcd}
        & \C \arrow[d, "q"] \\
        \Y \arrow[r, "\alpha"] & \X
      \end{tikzcd}
    \]
    with $q$ a Cartesian fibration. Pulling back along $\alpha$ yields another
    Cartesian fibration $p \colon \C_{\alpha}  \to \Y$, with the property that
    there is a canonical equivalence $(\C_{\alpha})_{Y} \simeq \C_{\alpha(Y)}$
    of fibers for any $Y \in \Y$, as witnessed by the pasting of pullbacks
\[
  \begin{tikzcd}
    (\C_{\alpha})_{Y} \arrow[d] \arrow[r, ""]
    & \C_{\alpha} \arrow[d, "p"] \arrow[r, ""]
    &  \C \arrow[d, "q"] \\
    \{ Y \} \arrow[r, hook] &\Y \arrow[r, "\alpha"]& \X
  \end{tikzcd},
\]
combined with the pasting law for pullbacks, namely the dual of
\cite[Lemma 4.4.2.1]{htt}. Likewise, straightening the Cartesian fibration $q$
lets us consider the diagram
    \[
      \begin{tikzcd}
        \Y^{\op} \arrow[r, "\alpha^{\op}"] & \X^{\op} \arrow[r, "\C_{(-)}"] &
        \Cat_{\infty}
      \end{tikzcd},
    \]
    and we see that precomposition with $\alpha^{\op}$ has the same property of
    indentifying the functor from $\Y^{\op}$ with the functor from $\X^{\op}$
    restricted to the image of $\alpha$. The straightening equivalence states
    this correspondence. Furthermore, if $q$ is also coCartesian fibration,
    then so is any pullback of $q$, as the image of $\C_{(-)} \circ
    \alpha^{\op}$ is contained in the image of $\C_{(-)}$.

  \item Suppose we have a commuting diagram of the form
    \[
      \begin{tikzcd}[column sep=normal]
        \C \arrow[rr, "F"] \arrow[dr, "q"'] && \D \arrow[dl, "p"] \\
                                           & \X &
      \end{tikzcd},
    \]
    with $q$ and $p$ Cartesian fibrations. By commutativity, $F$ induces a
    functor $F_{X} \colon \C_{X} \to \D_{X}$ for each $X \in \X$. If we further
    require that $F$ preserves Cartesian morphisms in $\C$, then the $F$
    determines a natural transformation $F \colon \C_{(-)} \Longrightarrow
    \D_{(-)}$ between functors in $\Fun(\X^{\op}, \Cat_{\infty})$. In other
    words, giving a functor $F \colon \C \to \D$ which preserves Cartesian
    edges is equivalent to giving a functor $F \colon \X^{\op} \times \Delta^{1} \to
    \Cat_{\infty}$ such that $F$ restricted to $\X^{\op} \times \{0\}$ is $\C_{(-)}$,
    $F$ restricted to $\X^{\op} \times \{1\}$ is $\D_{(-)}$ and $F$ restricted to
    $\{X\} \times \Delta^{1}$ is $F_{X}  \colon \C_{X} \to \D_{X}$ for any $X
    \in \X$.
\end{itemize}
We state a reformulation of Construction 4.3.1 in \cite{ambidexLurie}.
\begin{example}[] \label{thm:local.systems}
  Let $\C$ be an $\infty$-category. The assignment $A \mapsto \Fun(A, \C)$
  determines a functor $\Fun(-, \C) \colon \S^{\op} \to \Cat_{\infty}$. Thus,
  by unstraightening we obtain a Cartesian fibration over $\S$. We denote this
  Cartesian fibration by $q \colon \LocSys(\C) \to \S$, and refer to the domain
  $\LocSys(\C)$ as the \emph{$\infty$-category of $\C$-valued local systems}.
  Informally, the objects of $\LocSys(\C)$ are pairs $(A, \L)$, where $A$ is a
  space, and $\L$ is an object in $\Fun(A, \C)$, i.e. a
  \emph{$\C$-valued local system on $A$}. There is a canonical equivalence
  $\LocSys(\C)_{A} \simeq \Fun(A, \C)$ for any $A \in \S$, and for any morphism
  $f \colon B \to A$ in $\S$, the induced functor $f^{*} \colon \LocSys(\C)_{A}
  \to \LocSys(\C)_{B}$ can be canonically identified with the precomposition
  functor $f^{*} \colon \Fun(A, \C) \to \Fun(B, \C)$.

  Let $f \colon B \to A$ be a morphism in $\S$. If $\C$ admits colimits, then
  $f^{*} \colon \Fun(A, \C) \to \Fun(B, \C)$ admits a left adjoint $f_{!}$ by
  left Kan extension. It follows that if $\C$ admits colimits, then the
  Cartesian fibration $q \colon \LocSys(\C) \to \S$ is also a coCartesian
  fibration.
\end{example}

%% file: definitions/definitions.tex
We state the relevant definitions and results, most being taken from
\cite{ambidexLurie}.

\begin{definition}[]
    As in \cite{1811}, we define a \emph{left adjoint} to a functor $F \colon
    \C \to \D$ between $\infty$-categories to be a pair $(L, \eta)$ where $L
    \colon \D \to \C$ is a functor, and $\eta \colon
    \id_{\D} \to FL$ is a \emph{unit transformation} in the sense of
    \cite[Definition 5.2.2.7]{htt}. Similarly, a \emph{right adjoint} to $F$ is
    a pair $(R, \varepsilon)$ where $R \colon \D \to \C$ is a functor, and
    $\varepsilon \colon \id_{\C} \to RF$ is a \emph{counit transformation},
    i.e. the dual of \cite[Definition 5.2.2.7]{htt}. \cite[Proposition
    5.2.2.8]{htt} shows that this definition is equivalent to the definition of
    adjunctions given by \cite[Definition 5.2.2.1]{htt}.
\end{definition}

\begin{remark}[]
  By \cite[Proposition 5.2.2.9]{htt}, any adjunction $L \dashv R$ of functors
  between $\infty$-categories induces an adjunction of $1$-categories when passing
  to the homotopy categories. Furthermore, the unit and counit transformations
  are sent to the respective units and counits of the adjunction of
  $1$-categories. It follows that anytime we have an adjunction $L \dashv R$
  between functors of $\infty$-categories, with unit transformation $\eta
  \colon \id \to RL$ and counit transformation $\varepsilon \colon LR \to \id$,
  these natural transformations satisfy \emph{the triangle identities}. That
  is, the diagrams
  \begin{align*}
    &\begin{tikzcd}[ampersand replacement=\&]
      L \arrow[r, "\varepsilon_{L}"] \arrow[dr, equals, ""]\& LRL \arrow[d, "L \eta"] \\
                                     \& L
    \end{tikzcd} && \text{and} &&
    \begin{tikzcd}[ampersand replacement=\&]
      R \arrow[r, "R \varepsilon"] \arrow[dr, equals, ""] \& RLR \arrow[d, "\eta_{R}"] \\
                                   \& R
    \end{tikzcd}
  \end{align*}
  commute.
\end{remark}

\begin{notation}
  Let $\C$ and $\X$ be $\infty$-categories and let $q \colon \C \to \X$ be a
  Cartesian fibration. Given a morphism $f \colon Y \to X$, we will refer to
  the induced functor of fibers as the \emph{Carteisan lift of $f$ by $q$} and
  denote it by $f^{*} \colon \C_{X} \to \C_{Y}$. In any situation where $f^{*}$
  admits a \emph{left adjoint} we will denote this functor by $f_{!} \colon
  \C_{Y} \to \C_{X}$. Likewise, if $f^{*}$ admits a \emph{right adjoint} we
  will denote this functor by $f_{*} \colon \C_{Y} \to \C_{X}$. If $q$ is also
  a coCartesian fibration, then $f^{*}$ always admits a left adjoint $f_{!}$,
  as mentioned in \Cref{sec:straightening}. In this situation, we will refer to
  $f_{!}$ as the \emph{coCartesian lift of $f$ by $q$}.
\end{notation}
\begin{notation} \label{thm:unit.counit}
  Let $\C$ and $\X$ be $\infty$-categories and let $q \colon \C \to \X$ be a
  Cartesian and coCartesian fibration. As described in
  \Cref{sec:straightening}, for any morphism $f \colon Y \to X$ in $\X$ we
  induce an adjunction
  \[
    \begin{tikzcd}
      \C_{Y} \arrow[r, shift left, "f_{!}"] & \C_{X} \arrow[l, shift left, "f^{*}"]
    \end{tikzcd}.
  \]
  We denote the \emph{unit} and \emph{conuit} of this adjunction by
  \[
    \eta_{f} \colon \id_{\C_{Y}} \to f^{*}f_{!} \qquad \text{and} \qquad
    \varepsilon_{f} \colon f_{!}f^{*} \to \id_{\C_{X}}
  \]
  respectively.

  Suppose that $f^{*}$ admits a right adjoint $f_{*}$ in addition to the left
  adjoint $f_{!}$. In this situation, we denote the \emph{unit} and
  \emph{conuit} of the adjunction $f^{*} \dashv f_{*}$ by
 \[
    \eta_{*f} \colon \id_{\C_{Y}} \to f^{*}f_{*} \qquad \text{and} \qquad
    \varepsilon_{*f} \colon f_{*}f^{*} \to \id_{\C_{X}}
  \]
  respectively.
\end{notation}

\begin{definition} \label{thm:bc.maps}
  Let $\sigma$ be a commutative square of functors between $\infty$-categories,
  depicted as
\[
  \begin{tikzcd}[ampersand replacement = \&]
    \mathcal{F} \arrow[dr, phantom, "\sigma"]  	\&		\D \arrow[l,
    "i^{*}"']  \\ \mathcal{E}	\arrow[u, "h^{*}"'] \&	 \C \arrow[l, "f^{*}"']
    \arrow[u, "g^{*}"']
  \end{tikzcd}.
\]
 By commutativity of the diagram, we obtain a canonical natural equivalence
 $h^{*}f^{*} \simeq i^{*}g^{*}$. Assume that the functors $f^{*}$ and $i^{*}$
 admit left adjoints $f_{!}$ and $i_{!}$ respectively. Denote by $BC[\sigma]$
 the composite natural transformation
\[
  BC[\sigma] \colon i_{!} h^{*} \xrightarrow{i_{!}h^{*}\eta_{f}} i_{!} h^{*}f^{*}f_{!}
  \simeq i_{!}i^{*}g^{*}f_{!} \xrightarrow{\varepsilon_{i}g^{*}f_{!}} g^{*}f_{!}.
\]
We refer to $BC[\sigma]$ as the \emph{Beck-Chevalley transformation} associated
to $\sigma$. If the natural transformation $BC[\sigma]$ is an equivalence, we
will say that $\sigma$ satisfies the \emph{Beck-Chevalley condition}.
\end{definition}

\begin{remark}[]
  In the situation of \Cref{thm:bc.maps}, suppose instead that $f^{*}$ and
  $i^{*}$ admit right adjoints $f_{*}$ and $i_{*}$ respectively. In this
  situation, there is a Beck-Chevalley transformation of right adjoints, given
  by the composite natural transformation
  \[
      BC_{*}[\sigma] \colon g^{*} f_{*} \xrightarrow{\eta_{*i}g^{*}f_{*}}
    i_*i^{*}g^{*}f_{*} \simeq i_*h^{*}f^{*}f_{*} \xrightarrow{i_{*}h^{*}\varepsilon_{*f}}
    i_{*}h^{*}.
  \]
  If there is a need to distinguish between these we will denote the
  Beck-Chevalley transformation of left adjoints by $BC_{!}[\sigma]$ and the
  Beck-Chevalley transformation of right adjoints by $BC_{*}[\sigma]$. If we
  write $BC[\sigma]$ with no index, this is always meant as the Beck-Chevalley
  transformation of left adjoints, that is $BC[\sigma] = BC_{!}[\sigma]$. Note
  that $\sigma$ satisfying the Beck-Chevalley condition will always mean that
  $BC_{!}[\sigma]$ is an equivalence.
\end{remark}

\begin{construction}
  Let $\C$ and $\X$ be $\infty$-categories and let $q \colon \C \to \X$ be a Cartesian
  and coCartesian fibration. Let a commutative diagram $\sigma$ in $\X$ be
  given, depicted as
  \[
    \begin{tikzcd}
      \wt Y	\arrow[r, "h"] \arrow[d, "g_{Y}"] \arrow[dr, phantom, "\sigma"]
      &\wt X \arrow[d,"g_{X}"]
      \\
      Y	\arrow[r, "f"]
      &X
    \end{tikzcd}.
  \]
  As described in \Cref{sec:straightening}, any Cartesian fibration induces a
  functor $\X^{\op} \to \Cat_{\infty}$ given by $X \mapsto \C_{X}$ on objects
  and $f \mapsto f^{*}$ on morphisms. Thus, by $q$ we induce a commutative
  diagram $\sigma^{*}$ in $\Cat_{\infty}$ depicted as
  \[
    \begin{tikzcd}
      \C_{\wt Y}	  	\arrow[dr, phantom, "\sigma^{*}"]
      &\C_{\wt X} \arrow[l, "h^{*}"']
      \\\C_{Y}	\arrow[u, "g_{Y}^{*}"']
      &\C_{X} \arrow[l, "f^{*}"'] \arrow[u, "g_{X}^{*}"']
    \end{tikzcd}.
  \]
  As $q$ in addition to a Cartesian fibration is a coCartesian fibration, in
  particular the functors $h^{*}$ and $f^{*}$ admit left adjoints $h_{!}$ and
  $f_{!}$, respectively. Thus, we obtain a Beck-Chevalley transformation
  $BC[\sigma^{*}] \colon h_{!}g_{Y}^{*} \to g_{X}^{*} f_{!}$.
  Abusing notation, we will also refer to this transformation as the
  \emph{Beck-Chevalley transformation} associated to $\sigma$ and denote it by
  $BC[\sigma]$, and if $BC[\sigma]$ is an equivalence we will say that $\sigma$
  \emph{satisfies the Beck-Chevalley condition}.
\end{construction}

\begin{definition}[]
  Let $\C$ and $\X$ be $\infty$-categories. Given a map $q \colon \C \to \X$ of
  simplicial sets, we say that $q$ is a \emph{Beck-Chevalley fibration} if the
  following conditions are satisfied:
  \begin{enumerate}
    \item The map $q$ is a Cartesian and coCartesian fibration.

    \item $\X$ admits pullbacks.

    \item Every pullback square in $\X$ satisfies the Beck-Chevalley condition.
      That is, for every pullback square $\sigma$ in $\X$, depicted as
      \[
        \begin{tikzcd}
          \wt Y	\arrow[r, "h"] \arrow[d, "g_{Y}"] 	\arrow[dr, phantom, "\sigma"]
          &\wt X \arrow[d,"g_{Y}"]
          \\
          Y	\arrow[r, "f"]
          &X
        \end{tikzcd},
      \]
      the Beck-Chevalley transformation $BC[\sigma] \colon h_{!} g_{Y}^{*}
      \to g_{Y}^{*}f_{!}$ associated to $\sigma$ is an equivalence of functors
      from $\C_{Y}$ to $\C_{\wt X}$.
  \end{enumerate}
\end{definition}

\begin{example}[] \label{thm:local.systems.bc}
  Let $\C$ be an $\infty$-category which admits colimits. The Cartesian and
  coCartesian fibration $q \colon \LocSys(\C) \to \S$ of $\C$-valued local
  systems described in \Cref{thm:local.systems} is a Beck-Chevalley fibration
  -- see \Cref{thm:4.3.3.lurie}.
\end{example}

Given a Beck-Chevalley fibration $q \colon \C \to \X$ and a morphism $f \colon
Y \to X$ in $\X$, we are interested in the situation where $f_{!}$ in addition
to a left adjoint is also a right adjoint to $f^{*}$. We define a class of
morphisms in $\X$ for which this is the case, namely the \emph{ambidextrous}
morphisms of $\X$. The definition is done inductively on the truncation level of
the morphism.

\begin{construction} \label{def:n-ambidex}
  Let $q \colon \C \to \X$ be a Beck-Chevalley fibration. We define the
  following data for every $n\geq -2$:
  \begin{itemize}
    \item A collection of morphisms in $\X$ we call \emph{weakly
      $n$-ambidextrous morphisms}.

    \item For every weakly $n$-ambidextrous morphism $f \colon Y \to X$ in $\X$
      a natural transformation $\nu_{f}^{n} \colon f^{*}f_{!} \to
      \id_{\C_{Y}}$, which we refer to as the \emph{wrong way counit of $f$}
      (despite this transformation not being a counit of an adjunction in
      general).
    \item A collection of morphisms in $\X$ we call \emph{$n$-ambidextrous
      morphisms}, which includes all weakly $n$-ambidextrous morphisms.
    \item For every $n$-ambidextrous morphism $f \colon Y \to X$ in $\X$ the
      wrong way counit \emph{is} in fact a counit of an adjunction $f^{*}
      \dashv f_{!}$. Hence we induce a \emph{wrong way unit} $\mu_{f}^{n}
      \colon \id_{\C_{X}} \to f_{!}f^{*}$ which exhibits $f_{!}$ as a right
      adjoint to $f^{*}$.
  \end{itemize}
  These collections are defined inductively. Let $f \colon Y \to X$ be a
  morphism in $\X$. We say that $f$ is \emph{$(-2)$-ambidextrous} if $f$ is an
  equivalence (in this situation, we also say that $f$ is \emph{weakly
  $(-2)$-ambidextrous}). Thus, the induced adjoints $f_{!}$ and $f^{*}$ from
  $f$ are mutually inverse equivalences,  and the unit $\eta_{f}$ and counit
  $\varepsilon_{f}$ are equivalences. We define the \emph{wrong way counit}
  $\smash{\nu_{f}^{-2}} \colon f^{*}f_{!} \to \id_{\C_{Y}}$ of $f$ as an
  inverse to $\eta_{f}$ and we define the \emph{wrong way unit}
  $\smash{\mu^{-2}_{f}}$ of $f$ as an inverse to $\varepsilon_{f}$.

  Now, suppose that the notion of an $n$-ambidextrous morphism has been defined
  for some $n\geq -2$ and that for all $n$-ambidextrous morphisms a wrong way
  unit has been defined. Let again $f \colon Y \to X$ be a morphism in $\X$,
  and denote the pullback square of $f$ along itself by $\sigma$. Let $\Delta
  \colon Y \to Y \times_{X} Y$ be the diagonal map, defined as the map fitting
  into the commuting diagram
    \[
      \begin{tikzcd}
        Y \arrow[drr, bend left, "\id_{Y}"] \arrow[ddr, bend right, "\id_{Y}"] \arrow[dr, "\Delta"]
        & & \\
        & Y\times_{X} Y \arrow[r, "\pi_1"] \arrow[d, "\pi_2"] \arrow[dr,
        phantom, "\sigma"]
        & Y \arrow[d,
        "f"] \\
        & Y \arrow[r, "f"] & X
      \end{tikzcd}.
    \]
    We say that $f$ is \emph{weakly $(n+1)$-ambidextrous} if $\Delta$ is
    $n$-ambidextrous. As $q \colon \C \to \X$ was assumed to be a
    Beck-Chevalley fibration, $\sigma$ satisfies the Beck-Chevalley condition,
    so $BC[\sigma]$ is an equivalence. We can thus define a natural
    transformation $\nu_{f}^{n+1} \colon f^{*}f_{!} \to \id_{\C_{Y}}$ as the
    composite
    \[
      \nu_{f} ^{n+1} \colon  f^{*}f_{!} \xrightarrow{BC[\sigma]^{-1}}
      \pi_{1!}\pi_2^{*} \xrightarrow{\pi_{1!}\mu_{\Delta}^{n}\pi_2^{*}}
      \pi_{1!}\Delta_{!}\Delta^{*}\pi_2^{*} \simeq \id_{\C_{Y}},
    \]
    using that $\pi_{i}\Delta \simeq \id_{Y}$, and that applying both $(-)_{!}$
    and $(-)^{*}$ is functorial (see \Cref{sec:straightening}). We refer to
    $\nu_{f}^{n+1}$ as the \emph{wrong way counit map} of $f$ (note that this
    is abusive terminology as $\nu_{f}^{n+1}$ is only guaranteed to be a counit
    when $f$ is $(n+1)$-ambidextrous).

    We say that $f$ is \emph{$(n+1)$-ambidextrous} if the following
    condition holds: For every pullback square
    \[
      \begin{tikzcd}
        \wt Y	\arrow[r, "h"] \arrow[d, ""] 	&		\wt X \arrow[d,""] \\
        Y	\arrow[r, "f"]							&		X
      \end{tikzcd}
    \]
    in $\X$, the morphism $h$ is weakly $(n+1)$-ambidextrous and the natural
    transformation \smash{$\nu_{h}^{n+1}$} is the counit of an adjunction
    $h^{*} \dashv h_{!}$. We define the \emph{wrong way unit} $\mu_{h}^{n+1}
    \colon \id_{\C_{\wt Y}} \to h^{*}h_{!}$ of $h$ as a
    compatible unit of the adjunction determined by \smash{$\nu_{h}^{n+1}$}.

    If the Beck-Chevalley fibration $q \colon \C \to \X$ in question is
    ambiguous, we will instead say that $f$ is \emph{(weakly) $n$-ambidextrous
    with respect to $q$}, and we will refer to $\nu_{f}^{n}$ and $\mu_{f}^{n}$
    as \emph{the wrong way (co)unit of $f$ with respect to $q$.}
\end{construction}

\begin{remark} \label{thm:ambidex.properties}
  Let $q\colon \C \to \X$ be a Beck-Chevalley fibration and let $f\colon Y \to
  X$ be a morphism in $\X$. We make the following observations from
  \Cref{def:n-ambidex}:
  \begin{enumerate}
    \item If $f$ is $n$-ambidextrous, then $f$ is weakly $n$-ambidextrous. Indeed,
      for $n=-2$ this was by definition, and for $n\geq -1$, the square
      \[
        \begin{tikzcd}
          Y	\arrow[r, "f"] \arrow[d, "\id_{Y}"]
          &X \arrow[d,"\id_{X}"]
          \\
          Y	\arrow[r, "f"]
          &X
        \end{tikzcd}
      \]
      is a pullback square.

    \item The same pullback
      square as above also shows that if $f$ is $n$-ambidextrous, then the
      wrong way counit map $\nu_{f}^{n}$ determines an adjunction $f^{*}
      \dashv f_{!}$.

    \item If $f$ is $n$-ambidextrous, then any pullback of $f$ is
      $n$-ambidextrous. Indeed, if $h$ is a pullback of $f$, then any
      pullback of $h$ is also a pullback of $f$, by the pasting law for
      pullback squares (the dual of \cite[Lemma 4.4.2.1]{htt}). Thus, the
      conditions to be checked on pullbacks of $h$ follows from
      $n$-ambidexterity of $f$.

    \item If $f$ is weakly $n$-ambidextrous, then any pullback of $f$ is weakly
      $n$-ambidextrous. For $n=-2$ this is because pullbacks preserve
      equivalences. For $n \geq -1$, if $h \colon \wt Y \to \wt X$ is a
      pullback of $f$, denote the diagonal map of $f$ by $\Delta_{f} \colon Y
      \to Y \times_{X} Y$ and denote the diagonal map of $h$ by $\Delta_{h}
      \colon \wt Y \to \wt Y \times_{\wt X} \wt Y$.
      In this situation, we can form a pullback square of the form
      \[
        \begin{tikzcd}
          \wt Y	\arrow[r, "\Delta_{h}"] \arrow[d, ""]
          &\wt Y \times_{\wt X} \wt Y \arrow[d,""]
          \\
          Y	\arrow[r, "\Delta_{f}"]
          &Y \times_{X} Y
        \end{tikzcd}.
      \]
      As $f$ is weakly $n$-ambidextrous, $\Delta_{f}$ is $(n-1)$-ambidextrous so
      by the previous remark $\Delta_{h}$ is $(n-1)$-ambidextrous which by
      definition means that $h$ is weakly $n$-ambidextrous.
  \end{enumerate}
\end{remark}

Recall that a morphism $f \colon Y \to X$ in any $\infty$-category $\X$ is
called \emph{$n$-truncated} for $n\geq -2$ if for every $Z \in \X$, the map
$\Map_\X(Z,Y) \to \Map_{\X} (Z, X)$ induced by post-composition with $f$ has
$n$-truncated homotopy fibers (\cite[Definition 5.5.6.8]{htt}).

\begin{proposition}[] \label{thm:ambidex.truncated}
  Let $q \colon \C \to \X$ be a Beck-Chevalley fibration and let $f \colon Y
  \to X$ be a morphism in $\X$. If $f$ is (weakly) $n$-ambidextrous, then $f$
  is $n$-truncated.
\end{proposition}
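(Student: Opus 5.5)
We argue by induction on $n \geq -2$. Since every $n$-ambidextrous morphism is weakly $n$-ambidextrous by \Cref{thm:ambidex.properties}, it suffices to prove the claim for weakly $n$-ambidextrous morphisms.

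\emph{Base case.} For $n=-2$, a weakly $(-2)$-ambidextrous morphism is by definition an equivalence. Equivalences are exactly the $(-2)$-truncated morphisms: postcomposition with an equivalence induces equivalences of mapping spaces, whose homotopy fibers are contractible.

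\emph{Inductive step.} Suppose the statement holds for some $n \geq -2$, and let $f \colon Y \to X$ be weakly $(n+1)$-ambidextrous. By definition the diagonal $\Delta \colon Y \to Y\times_X Y$ is $n$-ambidextrous, hence weakly $n$-ambidextrous. By the induction hypothesis, $\Delta$ is therefore $n$-truncated.

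It remains to invoke the standard fact that a morphism is $(n+1)$-truncated if and only if its diagonal is $n$-truncated (\cite[Lemma 5.5.6.15]{htt}, which applies since $\X$ admits pullbacks). This gives that $f$ is $(n+1)$-truncated and closes the induction.

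If one prefers not to cite that lemma, the argument is as follows. Fix $Z \in \X$. The functor $\Map_\X(Z,-)$ preserves pullbacks, so $\Map(Z, Y\times_X Y) \simeq \Map(Z,Y)\times_{\Map(Z,X)}\Map(Z,Y)$, and $\Map(Z,\Delta)$ is the diagonal of the map of spaces $\Map(Z,f)$. This reduces the claim to spaces, where it is classical. For a map of spaces $p \colon E \to B$, the homotopy fiber of the diagonal $E \to E\times_B E$ over a point $(e,e')$ is the space of paths from $e$ to $e'$ in the fiber $F_b$ of $p$. So the diagonal is $n$-truncated exactly when all path spaces of every $F_b$ are $n$-truncated. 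For a space $F$ this holds iff $F$ is $(n+1)$-truncated: the path spaces between basepoints are either empty or loop spaces, and $\pi_k(\Omega_x F) = \pi_{k+1}(F,x)$.

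The only delicate point is the case $n+1=-1$. There one must check that "the diagonal is an equivalence" is the same as "all fibers are empty or contractible", i.e.\ $(-1)$-truncated. This follows from the same path-space description: the diagonal being an equivalence means all path spaces of each fiber are contractible, which says each fiber is empty or contractible. This is exactly the content of the cited lemma, so no real obstacle is expected; the main work is just the identification of $\Map(Z,\Delta)$ with the diagonal of $\Map(Z,f)$.
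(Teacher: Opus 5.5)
Your proof is correct and follows the paper's own route. The paper also settles $n=-2$ directly and then cites \cite[Lemma 5.5.6.15]{htt} (a morphism is $n$-truncated iff its diagonal is $(n-1)$-truncated), leaving the induction implicit; you make that induction explicit and add a valid sketch of the cited lemma, including the edge case $n+1=-1$.
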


\begin{proof}
  For $n=-2$ this is clear. For $n\geq -1$, this follows from \cite[Lemma
  5.5.6.15]{htt} which states that a morphism $f \colon Y \to X$ in $\X$ is
  $n$-truncated for $n\geq -1$ if and only if the diagonal $\Delta \colon Y \to
  Y \times_{X} Y$ is $(n-1)$-truncated.
\end{proof}

We will need to establish some coherence between (weak) ambidexterity of
different degrees.
\begin{proposition}\label{thm:ambidex.coherence}
  Let $q \colon \C \to \X$ be a Beck-Chevalley fibration, let $f \colon Y
  \to X$ be a morphism in $\X$, and let integers $-2 \leq m \leq n$ be given.
  \begin{enumerate}
    \item If $f$ is weakly $m$-ambidextrous, then $f$ is weakly $n$-ambidextrous
      and the wrong way counit maps $\nu_{f}^{m}$ and
      $\nu_{f}^n$ are
      equivalent.
    \item If $f$ is $m$-ambidextrous, then $f$ is $n$-ambidextrous and the
      wrong way unit maps $\mu_{f}^{m}$ and $\mu_{f}^{m}$ are equivalent.
    \item If $f$ is (weakly) $n$-ambidextrous, then $f$ is (weakly)
      $m$-ambidextrous if and only if $f$ is $m$-truncated.
  \end{enumerate}
\end{proposition}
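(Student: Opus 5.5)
The plan is to follow the proof of the analogous statement in Hopkins--Lurie (their Proposition 4.1.10), by induction on $m$. It suffices to treat the case $n = m+1$ for (1) and (2) and then iterate. The key input is that for a $(-2)$-truncated map, i.e. an equivalence, everything is forced. For $m=-2$ we must show that an equivalence $f$ is weakly $(-1)$-ambidextrous with $\nu_f^{-1}\simeq\nu_f^{-2}$. The diagonal $\Delta$ of an equivalence is itself an equivalence, hence $(-2)$-ambidextrous, so $f$ is weakly $(-1)$-ambidextrous. In the pullback square $\sigma$ of $f$ along itself, all maps are equivalences. Consequently $\nu_f^{-1}$ is the composite $BC[\sigma]^{-1}$, followed by $\pi_{1!}\mu_\Delta^{-2}\pi_2^*$, followed by the identification $\pi_{1!}\Delta_!\Delta^*\pi_2^*\simeq \id$, and each of these is built from (inverses of) units and counits of adjoint equivalences. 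I would then check, using the triangle identities, that this composite equals $\eta_f^{-1}=\nu_f^{-2}$. The most efficient route is to note that the datum of an adjoint equivalence is contractible, so any two such composites agree in the homotopy category.

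For the inductive step of (1), assume (1) and (2) hold for $m-1$ and let $f$ be weakly $m$-ambidextrous. Then $\Delta$ is $(m-1)$-ambidextrous, so by (2) for $m-1$ it is $m$-ambidextrous with $\mu^{m-1}_\Delta\simeq\mu^m_\Delta$. Hence $f$ is weakly $(m+1)$-ambidextrous, and the formulas defining $\nu_f^m$ and $\nu_f^{m+1}$ agree term by term. For (2), let $f$ be $m$-ambidextrous and let $h$ be any pullback of $f$. Then $h$ is $m$-ambidextrous by \Cref{thm:ambidex.properties}(3), hence weakly $m$-ambidextrous, so by (1) it is weakly $(m+1)$-ambidextrous with $\nu_h^{m+1}\simeq\nu_h^m$. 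The latter is a counit of an adjunction $h^*\dashv h_!$, so the former is as well. The compatible units then agree, since a unit is determined by its counit up to equivalence \cite[Proposition 5.2.2.8]{htt}. Thus $f$ is $(m+1)$-ambidextrous and $\mu_f^{m+1}\simeq\mu_f^m$.

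For (3), one direction is \Cref{thm:ambidex.truncated}. Conversely, suppose $f$ is (weakly) $n$-ambidextrous and $m$-truncated; I induct on $m$. If $m=-2$, then $f$ is an equivalence and so is (weakly) $(-2)$-ambidextrous. If $m\ge -1$ and $f$ is weakly $n$-ambidextrous, then $\Delta$ is $(n-1)$-ambidextrous and, by \cite[Lemma 5.5.6.15]{htt}, $(m-1)$-truncated. By induction $\Delta$ is $(m-1)$-ambidextrous, so $f$ is weakly $m$-ambidextrous. For the strong version, every pullback $h$ of $f$ is $m$-truncated, since truncated maps are stable under pullback. The weak case then makes $h$ weakly $m$-ambidextrous, and by (1) we get $\nu_h^m\simeq\nu_h^n$, which is a counit by $n$-ambidexterity of $f$. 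Hence $f$ is $m$-ambidextrous.

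The main obstacle is the base case $m=-2$ of (1). There one must genuinely identify the Beck--Chevalley composite with $\eta_f^{-1}$, which requires careful bookkeeping of the equivalences $\pi_i\Delta\simeq\id$ and the functoriality equivalences $(gf)_!\simeq g_!f_!$. I would first try the contractibility argument, and fall back on a direct triangle-identity computation if that fails.
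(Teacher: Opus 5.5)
Your inductive scaffolding is correct and is the Hopkins--Lurie argument that the paper cites instead of giving a proof. Statement (1) at level $m$ gives (2) at level $m$ by passing to pullbacks, (2) at level $m$ gives (1) at level $m+1$ through the diagonal, and (3) is an induction on $m$ using truncatedness of $\Delta$ and pullback-stability of truncated maps. The gap is the base case, which is the only non-formal step: you leave it as a plan, and the shortcut you favour does not prove it. (Below I write levels in parentheses to avoid confusion with inverses.)

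The problem is that $\nu_f^{(-1)}$ is not assembled from a single adjoint-equivalence datum. It also uses the 2-cells $\beta_i\colon \pi_i\Delta\simeq\id_Y$ and $f\pi_1\simeq f\pi_2$ in $\X$, and it genuinely depends on them. Replace $\beta_2$ by its composite with a self-homotopy $\gamma$ of $\id_Y$. This changes the identification $\Delta^*\pi_2^*\simeq\id$, and hence $\nu_f^{(-1)}$, by the induced automorphism $\gamma^*$ of $\id_{\C_Y}$. That automorphism can be nontrivial: for $\LocSys(\C)\to\S$ with $Y=X=BA$ and $A$ abelian, $\gamma$ is an element of $A$ acting on every local system. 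Neither contractibility of adjoint-equivalence data nor the triangle identities can see this. Any proof must use that $\Delta$ comes from the universal property, i.e.\ that the composite homotopy $f\simeq f\pi_1\Delta\simeq f\pi_2\Delta\simeq f$ is trivial.

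Here is one way to do this with tools already in the paper. Let $\lambda$ be the square with top $\Delta$, left $\id_Y$, right $\pi_2$ and bottom $\id_Y$, commuting via $\beta_2$, and paste it to the left of $\sigma$. By the defining property of $\Delta$, the outer rectangle is identified (via $\beta_1$) with the square whose top and left are $\id_Y$, whose bottom and right are $f$, and whose 2-cell is the identity. Its Beck--Chevalley transformation is just $\eta_f$. So \Cref{thm:bc.pasting}~(1) gives $\eta_f\simeq BC[\sigma]\circ \pi_{1!}BC[\lambda]$, up to $\pi_{1!}\Delta_!\simeq\id$.

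Unwinding the definition, $BC[\lambda]$ is $\varepsilon_\Delta\pi_2^*$ precomposed with the identification $\Delta_!\simeq\Delta_!\Delta^*\pi_2^*$ coming from $\beta_2$. Hence $\mu_\Delta^{(-2)}\pi_2^*\circ BC[\lambda]=\varepsilon_\Delta^{-1}\pi_2^*\circ BC[\lambda]$ is exactly that identification. Substituting into the definition of $\nu_f^{(-1)}$, the identifications built from $\beta_1$ and $\beta_2$ cancel, giving $\nu_f^{(-1)}\circ\eta_f\simeq\id$, i.e.\ $\nu_f^{(-1)}\simeq\eta_f^{-1}=\nu_f^{(-2)}$. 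With this base case in place, the rest of your argument goes through as written.
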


\begin{proof}
  Omitted. See \cite[Proposition 4.1.10 (4)-(6)]{ambidexLurie}.
\end{proof}

\begin{definition}[] \label{def:ambidex}
  Let $q \colon \C \to \X$ be a Beck-Chevalley fibration and let $f$ be a
  morphism in $\X$.
  \begin{enumerate}
    \item We say that $f$ is \emph{weakly ambidextrous} if $f$ is weakly
      $n$-ambidextrous for some integer $n\geq -2$. In this situation, we denote
      the \emph{wrong way counit} $\nu_{f}^n$ of $f$ by $\nu_{f}$, which is
      well-defined up to equivalence by \Cref{thm:ambidex.coherence} (1).

    \item We say that $f$ is \emph{ambidextrous} if $f$ is $n$-ambidextrous for
      some integer $n\geq -2$. In this situation, we denote the \emph{wrong way
      unit} $\mu_{f}^n$ of $f$ by $\mu_{f}$, which is well-defined up to equivalence
      by \Cref{thm:ambidex.coherence} (2).
  \end{enumerate}
  If the Beck-Chevalley fibration $q \colon \C \to \X$ in question is
  ambiguous, we will instead say that $f$ is \emph{(weakly) ambidextrous with
  respect to $q$}, and refer to $\nu_{f}$ and $\mu_{f}$ as \emph{the wrong way
    (co)unit with respect to $q$}.
\end{definition}

\begin{definition}
  Let $q \colon \C \to \X$ be a Beck-Chevalley fibration and let $f \colon Y
  \to X$ be a morphism in $\X$. Assume that $f$ is weakly ambidexterous and
  that $f^{*}$ admits a right adjoint $f_{*}$. In this situation, we define the
  \emph{norm map} $\Nm_{f}$ associated to $f$ to be the composite
  \[
    \Nm_{f} \colon f_{!} \xrightarrow{\eta_{*f} f_{!}} f_{*}f^{*}f_{!}
    \xrightarrow{f_{*}\nu_{f}} f_{*}.
  \]
  That is, $\Nm_{f}$ is the mate of $\nu_{f} \colon f^{*}f_{!} \to
  \id_{\C_{Y}}$ under the adjunction $f^{*}\dashv f_{*}$.
\end{definition}

\begin{remark}[]
  Let $q \colon \C \to \X$ be a Beck-Chevalley fibration, let $f \colon Y \to
  X$ be a
  weakly ambidextrous morphism in $\X$, and assume that $f^{*}$ admits a right
  adjoint $f_{*}$. In this situation, $\Nm_{f}$ determines and is determined by
  $\nu_{f}$ under the adjunction $f^{*} \dashv f_{*}$. In particular, $\Nm_{f}$
  is equivalent to the identity if and only if $\nu_{f}$ is equivalent to the
  counit of this adjunction.
\end{remark}

\begin{remark}[]
  Let $q\colon \C \to \X$ be a Beck-Chevalley fibration and $f \colon Y \to X$
  a morphism in $\X$. If $f$ is ambidextrous, then the wrong way counit
  $\nu_{f} \colon f^{*}f_{!} \to \id_{\C_{Y}}$ exhibits $f_{!}$ as a right
  adjoint to $f^{*}$. In particular, the norm map $\Nm_{f}$ exists and is an
  equivalence.
\end{remark}

\begin{example}[]
  Let $\C$ be an $\infty$-category which admits colimits and limits. The
  Beck-Chevalley fibration $q \colon \LocSys(\C) \to \S$ of $\C$-valued local
  systems described in  \Cref{thm:local.systems} and
  \Cref{thm:local.systems.bc} has the property, that for any map $f\colon B \to
  A$ of spaces, the induced precomposition functor $f^{*} \colon \Fun(A, \C)
  \to \Fun(B, \C)$ admits both a left adjoint $f_{!}$ and a right adjoint
  $f_{*}$. Thus, for such local systems, the norm map $\Nm_{f} \colon f_{!} \to
  f_{*}$ always exists when $f$ is weakly ambidextrous.
\end{example}

%% file: normSquare/bcMaps.tex
In this section, we prove properties of Beck-Chevalley transformations
and norm maps, which will be needed in the proof of \Cref{thm:norm.square}.

\begin{remark}[] \label{thm:nat.trans}
  We first make a simple observation regarding natural transformations.
  Consider four functors of $\infty$-categories, $F, \smash{\wt F}\colon \C \to \D$ and $
  G, \smash{\wt G} \colon \D \to \E$ with natural transformations $\alpha, \beta$
  between them in the following way:
  \[
    \begin{tikzcd}
      \C \arrow[r, bend left, "F"] \arrow[r, bend right, "\wt F"'] \arrow[r,
      phantom, "\alpha \Downarrow"] & \D \arrow[r, bend left, "G"] \arrow[r,
      bend right, "\wt G"'] \arrow[r, phantom, "\beta \Downarrow"] & \mathcal{E}
    \end{tikzcd}.
  \]
  In this situation, there are two ways of composing $\alpha$ and $\beta$ into a
  transformation $GF \Rightarrow \wt G \wt F$ corresponding to the two ways
  around the diagram
  \[
    \begin{tikzcd}
      GF	\arrow[r, "G \alpha"] \arrow[d, "\beta F"] 	&		G \wt F \arrow[d,"\beta
      \wt F"] \\
      G \wt F	\arrow[r, "\wt G \alpha"]							&		\wt G \wt F
    \end{tikzcd}.
  \]
  As $\beta$ is a natural transformation this diagram commutes, so both
  composites can be identified. It follows that whenever we have a diagram
  where composites of transformations transform distinct functors, we can swap
  the order of these transformations.
\end{remark}

\begin{notation}[]
  In \Cref{thm:bc.pasting}, \Cref{thm:bc.pasting.v}, and \Cref{thm:bc.calculus}
  below, we will use the suggestive notation $f^{*}$ for a functor between
  $\infty$-categories, even when $f^{*}$ is not induced as the Cartesian lift
  of a morphism $f$.
  Furthermore, we will employ the same notation as in \Cref{thm:unit.counit}
  for (co)units of adjunctions, for example writing $\eta_{f}$ for the
  unit of an adjunction $f_{! }\dashv f^{*}$ rather than $\eta_{f^{*}}$
\end{notation}

We now show how Beck-Chevalley transformations behave under the pasting of
diagrams.
\begin{lemma}[] \label{thm:bc.pasting}
  Let a commuting diagram of functors between $\infty$-categories be given, of the form
   \begin{equation}\label{eq:bc.pasting}
     \begin{tikzcd}
      \mathcal{H} \arrow[dr, phantom, "\sigma"']
      & \mathcal{F}  \arrow[l, "k^{*}"'] \arrow[dr, phantom, "\tau"']
      & \D \arrow[l, "i^{*}"']
      \\
      \mathcal{G} \arrow[u, "l^{*}"']
      & \mathcal{E} \arrow[l, "j^{*}"'] \arrow[u, "h^{*}"']
      &  \C \arrow[l, "f^{*}"'] \arrow[u, "g^{*}"']
    \end{tikzcd}.
  \end{equation}
  \begin{enumerate}
    \item
  Suppose that all the horizontal functors admit left adjoints (denoted
  by $(-)_{!}$). In this situation, the composite transformation
  \[
    i_{!}k_{!}l^{*} \xrightarrow{i_{!} BC_{!}[\sigma]} i_{!}h^{*}j_{!}
    \xrightarrow{BC_{!}[\tau] j_{!}} g^{*}f_{!}j_{!}
  \]
  is equivalent to the Beck-Chevalley transformation of left adjoints associated to the outer
  square.
\item
  Suppose that all the horizontal functors admit right adjoints (denoted by
  $(-)_{*}$). In this situation, the composite transformation
  \[
    g^{*}f_{*}j_{*} \xrightarrow{BC_{*}[\tau]j_{*}} i_{*}h^{*}j_{*}
    \xrightarrow{i_{*}BC_{*}[\sigma]} i_{*}k_{*}l^{*}
  \]
  is equivalent to the Beck-Chevalley transformation of right adjoints
  associated to the outer square.
  \end{enumerate}
\end{lemma}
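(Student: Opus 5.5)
We argue in the homotopy category, where everything reduces to a diagram chase with units, counits, naturality (\Cref{thm:nat.trans}) and the triangle identities. For part (1), first note that the outer square has horizontal functors $k^{*}i^{*}$ and $j^{*}f^{*}$. Their left adjoints are $i_{!}k_{!}$ and $f_{!}j_{!}$. The unit and counit of these composite adjunctions are the standard composites
\[
  \eta_{fj} \colon \id \xrightarrow{\eta_{j}} j^{*}j_{!} \xrightarrow{j^{*}\eta_{f}j_{!}} j^{*}f^{*}f_{!}j_{!}, \qquad \varepsilon_{ik} \colon i_{!}k_{!}k^{*}i^{*} \xrightarrow{i_{!}\varepsilon_{k}i^{*}} i_{!}i^{*} \xrightarrow{\varepsilon_{i}} \id.
\]
Likewise, the commutativity equivalence of the outer square is the pasting $l^{*}j^{*}f^{*} \simeq k^{*}h^{*}f^{*} \simeq k^{*}i^{*}g^{*}$. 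Substituting these into \Cref{thm:bc.maps} writes $BC_{!}[\text{outer}]$ as an explicit composite. It starts at $i_{!}k_{!}l^{*}$, applies $\eta_{j}$ and then $\eta_{f}$, passes through the two commutativity equivalences, and finishes with $\varepsilon_{k}$ and then $\varepsilon_{i}$.

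Next we expand the composite in the statement,
\[
  i_{!}\bigl(\varepsilon_{k}h^{*}j_{!}\circ (\sim)\circ k_{!}l^{*}\eta_{j}\bigr) \quad\text{followed by}\quad \bigl(\varepsilon_{i}g^{*}f_{!}\circ(\sim)\circ i_{!}h^{*}\eta_{f}\bigr)j_{!},
\]
and compare it with the outer transformation. The two differ only in the order in which the steps occur. In the outer composite both units come first and both counits last. In the pasted composite the $\varepsilon_{k}$ step happens before the $\eta_{f}$ step. These two steps act on disjoint parts of the composite functor: $\varepsilon_{k}$ acts on the $k_{!}k^{*}$ segment, while $\eta_{f}$ is inserted to the right of $h^{*}$, after $j_{!}$. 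So \Cref{thm:nat.trans} lets us interchange them. Similarly, the $\sigma$-commutativity equivalence must be moved past $\eta_{f}$, and the $\tau$-equivalence past $\varepsilon_{k}$. Naturality of these equivalences makes the relevant squares commute. After these swaps the two composites agree term by term.

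The main obstacle is bookkeeping rather than any conceptual step. We must check that the composite adjunction data, and the pasted commutativity equivalence, are the ones canonically attached to the outer square. This holds essentially by definition: adjunctions compose with the composite unit and counit, which is legitimate by \cite[Proposition 5.2.2.8]{htt}, and the outer commutativity datum is the pasting of $\sigma$ and $\tau$. The interchange steps must also be organized carefully, and we would draw the large commutative diagram whose top-right path is the pasted composite and whose bottom-left path is the outer BC map. Each cell is then either a naturality square or an instance of \Cref{thm:nat.trans}.

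Part (2) follows from part (1) by duality. Passing to opposite $\infty$-categories turns right adjoints into left adjoints, exchanges units and counits, and reverses natural transformations. Under this translation $BC_{*}$ of a square becomes $BC_{!}$ of the opposite square. Alternatively, we can repeat the same diagram chase with $\eta_{*}$ and $\varepsilon_{*}$.
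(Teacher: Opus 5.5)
Your proposal is correct and is essentially the paper's proof. It identifies the composite unit and counit and the pasted commutativity equivalence (the paper's top triangles and top middle cell). It then matches the two composites through the same three interchange cells, each of which commutes by \Cref{thm:nat.trans}: $\eta_{f}$ against the $\sigma$-equivalence, $\eta_{f}$ against $\varepsilon_{k}$, and the $\tau$-equivalence against $\varepsilon_{k}$. Obtaining (2) by passing to opposite $\infty$-categories, which swaps units with counits and turns $BC_{*}$ into $BC_{!}$, is a valid substitute for the paper's ``similar argument''.
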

\begin{proof}
  We prove (1), the proof of (2) can be done by a similar argument. As the
  composition of adjoints produces adjoints, we see that the left adjoints of
  the functors $k^{*}i^{*}$ and $j^{*}f^{*}$ exist and are canonically
  equivalent to $i_{!}k_{!}$ and $f_{!}j_{!}$ respectively. Write $\eta_{fj}$
  for the unit of the adjunction  $f_{!}j_{!} \dashv j^{*}f^{*}$ and write
  $\varepsilon_{ik}$ for the counit of the adjunction $i_{!}k_{!} \dashv
  k^{*}i^{*}$. It follows that $\eta_{fj}$ can canonically be identified with
  the composite
  \[
    \id_{\mathcal{G}} \xrightarrow{\eta_{j}} j^{*}j_{!} \xrightarrow{j^{*}\eta_{f}j_{!}}
    j^{*}f^{*}f_{!}j_{!}
  \]
  and $\varepsilon_{ik}$ can canonically be identified with the composite
  \[
    i_!k_!k^{*}i^{*} \xrightarrow{i_{!}\varepsilon_{k}i^{*}} i_{!}i^{*}
    \xrightarrow{\varepsilon_{i}} \id_{\D}
  \]
  Consider the diagram
  \[
    \begin{tikzcd}[row sep={6em,between origins}, column sep={8em,between
      origins}]
      i_{!}k_{!}l^{*} \arrow[r, "\eta_{fj}"] \arrow[d, "\eta_{j}"]
      & i_{!}k_{!}l^{*}j^{*}f^{*}f_{!}j_{!} \arrow[rr, "\simeq"] \arrow[dr,
      "\simeq"]
      && i_{!}k_{!}k^{*}i^{*}g^{*}f_{!}j_{!} \arrow[r, "\varepsilon_{ik}"]
      \arrow[dr, "\varepsilon_{k}"description]
      & g^{*}f_{!}j_{!}
   \\ i_{!}k_{!}l^{*}j^{*}j_{!} \arrow[dr, "\simeq"] \arrow[ur,
   "\eta_{f}"description]
      && i_{!}k_{!}k^{*}h^{*}f^{*}f_{!}j_{!} \arrow[ur, "\simeq"] \arrow[dr,
      "\varepsilon_{k}"]
      &&i_{!}i^{*}g^{*}f_{!}j^{*} \arrow[u, "\varepsilon_{i}"]
   \\
      & i_{!}k_{!}k^{*}h^{*}j_{!} \arrow[r, "\varepsilon_{k}"] \arrow[ur,
      "\eta_{f}"]
      & i_{!}h^{*}j_{!} \arrow[r, "\eta_{f}"]
      & i_{!}h^{*}f^{*}f_{!}j^{*} \arrow[ur, "\simeq"]
      &
    \end{tikzcd}
  \]
  where the two outer composites are, respectively, the Beck-Chevalley
  transformation of the joined square of \cref{eq:bc.pasting} and the
  composite of $BC_{!}[\sigma]$ with $BC_{!}[\tau]$. The top right
  and top left triangles commute by the preceeding discussion and the top middle
  diagram commutes as the equivalence witnessing commutativity of the joined
  square of \cref{eq:bc.pasting} is canonically equivalent to the composite of
  the equivalences witnessing commutativity of $\sigma$ and $\tau$. The
  remaining three diagrams commute by \Cref{thm:nat.trans}. Thus, the whole
  diagram commutes, as desired.
\end{proof}
The analog of \Cref{thm:bc.pasting} for vertical pasting also holds. The
statement can be found as \cite[Lemma 2.1.6]{1811}.

\begin{lemma}[] \label{thm:bc.pasting.v}
  Let a commutative diagram of functors between $\infty$-categories be given,
  of the form
  \[
    \begin{tikzcd}[arrows=swap]
      \mathcal{H} \arrow[dr, phantom, "\sigma"]
      & \mathcal{G}  \arrow[l,
      "l^{*}"] \\ \mathcal{F} \arrow[dr, phantom, "\tau"]\arrow[u, "j^{*}"]  & \mathcal{E} \arrow[u,
      "k^{*}"] \arrow[l, "i^{*}"]  \\ \D \arrow[u, "h^{*}"] & \C \arrow[u,
      "g^{*}"] \arrow[l, "f^{*}"]
  \end{tikzcd}.
  \]
\begin{enumerate}
  \item Suppose that all the horizontal functors admit left adjoints (denoted by
    $(-)_{!}$). In this situation, the composite transformation
    \[
      l_{!}j^{*}h^{*} \xrightarrow{BC_{!}[\sigma]h^{*}} k^{*}i_{!}h^{*}
      \xrightarrow{k^{*}BC_{!}[\tau]} k^{*}g^{*}f_{!}
    \]
    is equivalent to the Beck-Chevalley transformation of left adjoints
    associated to the outer square.

  \item Suppose that all the horizontal functors admit right adjoints (denoted
    by $(-)_{*}$). In this situation, the composite transformation
    \[
      k^{*}g^{*}f_{*} \xrightarrow{k^{*}BC_{*}[\tau]} k^{*}i_{*}h^{*}
      \xrightarrow{BC_{*}[\sigma]h^{*}}l_{*}j^{*}h^{*}
    \]
    is equivalent to the Beck-Chevalley transformation of right adjoints
    associated to the outer square.
\end{enumerate}
\end{lemma}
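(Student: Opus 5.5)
We prove (1) by a diagram chase modelled on the proof of \Cref{thm:bc.pasting}; (2) is dual. The vertical composites $j^{*}h^{*}$ and $k^{*}g^{*}$ play the role of $h^{*}$ and $g^{*}$ in \Cref{thm:bc.maps}. The horizontal left adjoints $l_{!}$ and $f_{!}$ are not composed, so no composite (co)units are needed, only the units $\eta_f, \eta_i$ and the counits $\varepsilon_i, \varepsilon_l$.

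Unwinding the definitions, the Beck-Chevalley transformation of the outer square is
\[
  l_{!}j^{*}h^{*} \xrightarrow{\eta_f} l_{!}j^{*}h^{*}f^{*}f_{!} \simeq l_{!}l^{*}k^{*}g^{*}f_{!} \xrightarrow{\varepsilon_l} k^{*}g^{*}f_{!},
\]
where the middle equivalence comes from the outer square. That equivalence is canonically the composite of the equivalence $j^{*}h^{*}f^{*}\simeq j^{*}i^{*}g^{*}$ (from $\tau$) with $j^{*}i^{*}g^{*} \simeq l^{*}k^{*}g^{*}$ (from $\sigma$).

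The composite $k^{*}BC_{!}[\tau]\circ BC_{!}[\sigma]h^{*}$ expands to
\[
  l_{!}j^{*}h^{*} \xrightarrow{\eta_i} l_{!}j^{*}i^{*}i_{!}h^{*} \simeq l_{!}l^{*}k^{*}i_{!}h^{*} \xrightarrow{\varepsilon_l} k^{*}i_{!}h^{*} \xrightarrow{\eta_f} k^{*}i_{!}h^{*}f^{*}f_{!} \simeq k^{*}i_{!}i^{*}g^{*}f_{!} \xrightarrow{\varepsilon_i} k^{*}g^{*}f_{!}.
\]
We compare the two in one large diagram, as in \Cref{thm:bc.pasting}.

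\begin{enumerate}
\item Use \Cref{thm:nat.trans} to move $\eta_f$ to the far left. It acts on $f_{!}$ and $f^{*}$, which sit to the right of everything else, so it commutes past $\varepsilon_l$, the $\sigma$-equivalence and $\eta_i$.
\item Once $\eta_f$ comes first, use naturality to move the $\tau$-equivalence $h^{*}f^{*}\simeq i^{*}g^{*}$ ahead of $\eta_i$ and $\varepsilon_l$. The composite then contains the segment
\[
  i^{*}g^{*} \xrightarrow{\eta_i} i^{*}i_{!}i^{*}g^{*} \xrightarrow{\varepsilon_i} i^{*}g^{*},
\]
where $\varepsilon_l$ and the $\sigma$-equivalence are slid past $\varepsilon_i$, again by \Cref{thm:nat.trans}.
\item This segment is the identity by the triangle identity for $i_{!}\dashv i^{*}$. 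What remains is $\eta_f$, then the $\tau$-equivalence, then the $\sigma$-equivalence, then $\varepsilon_l$, which is exactly the outer transformation.
\end{enumerate}

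The main obstacle is bookkeeping rather than any single step. One must check that each interchange is a genuine instance of \Cref{thm:nat.trans}, that is, that the two transformations act on disjoint factors. One must also check that the $\sigma$- and $\tau$-equivalences are whiskered consistently, so that their composite really is the outer square's equivalence, as in the top middle cell of the diagram in \Cref{thm:bc.pasting}. In practice I would draw the full pasting diagram with the triangle-identity cell in the middle and naturality squares around it, and verify each cell.

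Part (2) follows from the same argument with units and counits swapped, using the triangle identity $i^{*}\to i^{*}i_{*}i^{*}\to i^{*}$ for $i^{*}\dashv i_{*}$. Alternatively, apply (1) to the diagram of opposite categories, under which right adjoints become left adjoints.
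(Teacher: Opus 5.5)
Your proposal is correct and is essentially the argument the paper has in mind: the paper omits the proof and says it is a diagram chase similar to \Cref{thm:bc.pasting}, and your chase by interchanges from \Cref{thm:nat.trans} is exactly such an argument. The one structural difference from the horizontal case, which you handle correctly, is that the horizontal adjunctions are never composed, so instead of identifying composite (co)units the key cell is the triangle identity $(i^{*}\varepsilon_{i})\circ(\eta_{i}i^{*}) = \id_{i^{*}}$ for the middle adjunction $i_{!}\dashv i^{*}$ (dually, $(\varepsilon_{*i}i^{*})\circ(i^{*}\eta_{*i}) = \id_{i^{*}}$ for (2)).
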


\begin{proof}
  Omitted. The proof can done by a similar argument as the proof of
  \Cref{thm:bc.pasting}.
\end{proof}

The following commutativity properties between Beck-Chevalley transformations
and (co)units of adjunctions will be essential in the proof of
\Cref{thm:norm.square}.
\begin{lemma}[] \label{thm:bc.calculus}
  Let a commutative square
  \[
    \begin{tikzcd}[ampersand replacement = \&]
      \mathcal{F}	  	\&		\E \arrow[l, "i^{*}"']  \\
      \D	\arrow[u, "h^{*}"'] \&	\C \arrow[l, "f^{*}"'] \arrow[u, "g^{*}"']
    \end{tikzcd}
  \]
  of functors be given, and let $BC_{!}$ and $BC_{*}$ denote the two
  Beck-Chevalley transformations of this diagram (when they exist).
  \begin{enumerate}
    \item If $i^{*}$ and $f^{*}$ admit left adjoints $i_{!}$ and
      $f_{!}$, respectively, then $BC_{!}$ exists and the diagrams
      \begin{align*}
        \begin{tikzcd}[ampersand replacement=\&]
          i_{!} h^{*} f^{*} \arrow[r, "BC_{!}f^{*}"] \arrow[d, "\simeq"]
          \& g^*f_{!}f^* \arrow[d, "g^* \varepsilon_{f}"] \\
          i_{!}i^* g^* \arrow[r, "\varepsilon_{i}g^{*}"]
          \& g^*
        \end{tikzcd} \quad \textup{(1a)}
        && \text{and}
        &&
        \begin{tikzcd}[ampersand replacement=\&]
          h^{*}	\arrow[r, "\eta_{i}h^{*}"] \arrow[d, "h^* \eta_{f}"]
          \&		i^*i_{!}h^* \arrow[d,"i^* BC_{!}"] \\
          h^* f^*f_{!}	\arrow[r, "\simeq"]							\&		i^* g^*
          f_{!}
        \end{tikzcd} \quad \textup{(1b)}
      \end{align*}
      commute.

    \item If $i^*$ and $f^*$ admit right adjoints $i_{*}$ and
      $f_{*}$, respectively, then $BC_{*}$ exists and the diagrams
      \begin{align*}
        \begin{tikzcd}[ampersand replacement=\&]
          i^* g^* f_{*} \arrow[r, "i^* BC_{*}"] \arrow[d, "\simeq"]
          \& i^* i_{*} h^* \arrow[d, "\varepsilon_{*i} h^*"]
          \\
          h^* f^* f_{*} \arrow[r, "h^{*} \varepsilon_{* f}"]\& h^*
        \end{tikzcd} \quad \textup{(2a)}
        && \text{and}
        &&
        \begin{tikzcd}[ampersand replacement=\&]
          g^*	\arrow[r, "g^* \eta_{* f}"] \arrow[d, "\eta_{*i} g^*"]
          \&		g^* f_{*} f^* \arrow[d,"BC_{*} f^*"]
          \\ i_{*} i^* g^* \arrow[r, "\simeq"]
          \&i_{*} h^* f^*
        \end{tikzcd} \quad \textup{(2b)}
      \end{align*}
      commute.
  \end{enumerate}
\end{lemma}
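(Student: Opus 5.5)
We prove (1a) and (1b) by unwinding the definition of $BC_{!}$ from \Cref{thm:bc.maps} and then reducing each diagram to a single triangle identity, using \Cref{thm:nat.trans} to commute independent transformations. Part (2) is the formal dual: apply (1) to the square of opposite functors, under which right adjoints become left adjoints and the units $\eta_{*}$ and counits $\varepsilon_{*}$ become counits and units. One could also rerun the same argument with the roles of units and counits exchanged. Existence of $BC_{!}$ (resp.\ $BC_{*}$) is immediate from \Cref{thm:bc.maps} (resp.\ the subsequent remark), since only the adjoints of $i^{*}$ and $f^{*}$ are used.

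For (1a), write the top-right composite as
\[
  i_{!}h^{*}f^{*} \xrightarrow{\eta_{f}} i_{!}h^{*}f^{*}f_{!}f^{*} \simeq i_{!}i^{*}g^{*}f_{!}f^{*} \xrightarrow{\varepsilon_{i}} g^{*}f_{!}f^{*} \xrightarrow{g^{*}\varepsilon_{f}} g^{*}.
\]
The last two maps act on different functors, namely the outer $i_{!}i^{*}$ and the inner $f_{!}f^{*}$. By \Cref{thm:nat.trans} we may therefore move $\varepsilon_{f}$ before $\varepsilon_{i}$. By naturality of the commutativity equivalence $h^{*}f^{*}\simeq i^{*}g^{*}$, we may also move it before the equivalence. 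The composite becomes
\[
  i_{!}h^{*}f^{*} \xrightarrow{i_{!}h^{*}(f^{*}\varepsilon_{f}\circ \eta_{f}f^{*})} i_{!}h^{*}f^{*} \simeq i_{!}i^{*}g^{*} \xrightarrow{\varepsilon_{i}g^{*}} g^{*}.
\]
The triangle identity $f^{*}\varepsilon_{f}\circ\eta_{f}f^{*}\simeq \id$ collapses the first map, which leaves exactly the left-bottom composite.

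For (1b), expand the right-hand path as
\[
  h^{*}\xrightarrow{\eta_{i}h^{*}} i^{*}i_{!}h^{*} \xrightarrow{\eta_{f}} i^{*}i_{!}h^{*}f^{*}f_{!} \simeq i^{*}i_{!}i^{*}g^{*}f_{!} \xrightarrow{i^{*}\varepsilon_{i}} i^{*}g^{*}f_{!}.
\]
Using \Cref{thm:nat.trans}, we first swap $\eta_{i}$ past $\eta_{f}$ and past the equivalence, obtaining
\[
  h^{*}\xrightarrow{h^{*}\eta_{f}} h^{*}f^{*}f_{!}\simeq i^{*}g^{*}f_{!} \xrightarrow{\eta_{i}} i^{*}i_{!}i^{*}g^{*}f_{!} \xrightarrow{i^{*}\varepsilon_{i}} i^{*}g^{*}f_{!}.
\]
The last two maps then form the triangle identity $i^{*}\varepsilon_{i}\circ\eta_{i}i^{*}\simeq\id$, which gives the left-bottom composite.

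The only step needing care is the interchange of $\eta_{i}$ (or $\varepsilon_{f}$) with the commutativity equivalence $h^{*}f^{*}\simeq i^{*}g^{*}$. This is again an instance of \Cref{thm:nat.trans}, applied to the whiskering of that natural equivalence with the unit, so it requires no further input. All the identities are needed only in the homotopy category, per the paper's conventions, which makes each step a 1-categorical computation.
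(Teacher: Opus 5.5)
Your proposal is correct and matches the paper's argument for (1a): expand $BC_{!}$, commute independent transformations using \Cref{thm:nat.trans} and naturality of the equivalence $h^{*}f^{*}\simeq i^{*}g^{*}$, then collapse with a triangle identity. The paper leaves (1b) and (2) as ``similar proofs''. Your explicit treatment of (1b) fills this in correctly, as does your reduction of (2) to (1) by passing to opposite functors, under which $\varepsilon_{*}$ and $\eta_{*}$ become the unit and counit of a left adjunction and $BC_{*}$ becomes $BC_{!}$ reversed, so that (1a) and (1b) turn into (2b) and (2a).
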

\begin{proof}
  We show that (1a) commutes. Commutativity of the other diagrams can be shown
  by similar proofs. Writing out the definition of $BC_{!}$ and adding
  intermediate maps yields the diagram
  \[
\begin{tikzcd}
i_! h^* f^* \arrow[r, "i_{!}h^{*}\eta_f f^{*}"] \arrow[ddd, "\simeq"]
\arrow[rd, equals]
& i_! h^* f^* f_! f^* \arrow[r, "\simeq"] \arrow[d, "i_{!}h^{*}f^{*}\varepsilon_f"]
& i_! i^* g^* f_! f^* \arrow[r, "\varepsilon_{i}g^{*}f_{!}f^{*}"] & g^* f_! f^*
\arrow[ddd, "g^{*}\varepsilon_f"]
\\ & i_! h^* f^* \arrow[rd, "\simeq"] & &
\\ & & i_! i^* g^* \arrow[rd, "\varepsilon_{i}g^{*}"] &
\\ i_! i^* g^* \arrow[rrr, "\varepsilon_{i}g^{*}"] & & & g^*
\end{tikzcd}.
\]
The bottom left triangle commutes strictly, the top right diagram commutes by
\Cref{thm:nat.trans}, as $\varepsilon_{f}$ transform distinct functors from
$\simeq$ and $\varepsilon_{i}$, and the top left triangle commutes by the
triangle identity for adjoints. Thus, the whole diagram commutes.
\end{proof}

We will also need the following result in the proof of \Cref{thm:norm.square},
relating the norm map of an ambidextrous morphism to the wrong way unit:
\begin{lemma}\label{thm:nm.calculus}
  Let $q \colon \C \to \X$ be a Beck-Chevalley fibration and let $f \colon Y
  \to X$ be a morphism in $\X$. If $f$ is ambidextrous, then the diagram
  \[
      \begin{tikzcd}
        & f_{!}f^{*} \arrow[dd, "\Nm_{f}f^{*}"]
        \\
        \id_{\C_{X}} \arrow[ur, "\mu_{f}"] \arrow[dr, "\eta_{*f}"']
        &
        \\
        &
        f_{*}f^{*}
      \end{tikzcd}
  \]
  commutes.
\end{lemma}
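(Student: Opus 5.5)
We have to show $\Nm_f f^{*} \circ \mu_f \simeq \eta_{*f}$ as maps $\id_{\C_X} \to f_{*}f^{*}$. The plan is to unfold the norm map and reduce everything to a triangle identity for the ambidextrous adjunction $f^{*} \dashv f_{!}$, whose unit is $\mu_f$ and counit is $\nu_f$.

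Recall that $\Nm_f f^{*}$ is the composite
\[
  f_{!}f^{*} \xrightarrow{\eta_{*f} f_{!}f^{*}} f_{*}f^{*}f_{!}f^{*} \xrightarrow{f_{*}\nu_{f}f^{*}} f_{*}f^{*}.
\]
Precomposing with $\mu_f$ gives the composite
\[
  \id_{\C_X} \xrightarrow{\mu_f} f_{!}f^{*} \xrightarrow{\eta_{*f}f_{!}f^{*}} f_{*}f^{*}f_{!}f^{*} \xrightarrow{f_{*}\nu_f f^{*}} f_{*}f^{*}.
\]

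The first step is an interchange. By \Cref{thm:nat.trans}, applied to the whiskered transformations $\mu_f$ and $\eta_{*f}$, the first two maps may be swapped. So the composite becomes
\[
  \id \xrightarrow{\eta_{*f}} f_{*}f^{*} \xrightarrow{f_{*}f^{*}\mu_f} f_{*}f^{*}f_{!}f^{*} \xrightarrow{f_{*}\nu_f f^{*}} f_{*}f^{*}.
\]
Concretely, this is the naturality square of $\eta_{*f}$ applied to the morphism $\mu_f$. Here $\eta_{*f}$ is the unit $\id\to f_*f^*$, which is a transformation of endofunctors of $\C_X$.

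The second step isolates the triangle identity. The last two maps equal $f_{*}$ applied to the composite
\[
  f^{*} \xrightarrow{f^{*}\mu_f} f^{*}f_{!}f^{*} \xrightarrow{\nu_f f^{*}} f^{*}.
\]
This is exactly the triangle identity for the adjunction $f^{*}\dashv f_{!}$ with unit $\mu_f$ and counit $\nu_f$. Ambidexterity of $f$ (\Cref{def:ambidex}, together with \Cref{thm:ambidex.properties} (2)) says that $\nu_f$ is the counit of such an adjunction and that $\mu_f$ is a compatible unit. Hence this composite is equivalent to $\id_{f^{*}}$.

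Therefore the whole composite reduces to $\eta_{*f}$, which proves the lemma.

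The main point to check is that $\mu_f$ really is a \emph{compatible} unit for $\nu_f$, so that the triangle identities hold in the homotopy category. This is guaranteed by the definition together with the remark following the definition of adjunctions (via \cite[Proposition 5.2.2.9]{htt}). We also need the well-definedness of $\mu_f$ and $\nu_f$ up to equivalence across truncation levels, which is \Cref{thm:ambidex.coherence}. Beyond these, the argument is a routine whiskering computation.
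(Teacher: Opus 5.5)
Your proof is correct and is essentially the paper's argument. You precompose the unfolded norm $\Nm_f f^{*}$ with $\mu_f$, swap $\mu_f$ past $\eta_{*f}$ by naturality of $\eta_{*f}$ (equivalently \Cref{thm:nat.trans}), and then collapse $f_{*}\bigl(\nu_f f^{*} \circ f^{*}\mu_f\bigr)$ to the identity using the triangle identity for the adjunction $f^{*} \dashv f_{!}$ with unit $\mu_f$ and counit $\nu_f$.
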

\begin{proof}
    Existence of the wrong-way unit $\mu_{f}$ and right adjoint $f_{*}$
    follows from $f$ being ambidextrous. Writing out the definition of
    $\Nm_{f}$ and adding intermediate maps yields the diagram
  \[
    \begin{tikzcd}[row sep=huge]
      &
      &
      f_{!}f^{*} \arrow[d, "\eta_{*f}f_{!}f^{*}"]
      \\
      \id_{\C_{X}} \arrow[urr, bend left, "\mu_{f}"] \arrow[r, "\eta_{*f}"]
      \arrow[drr, bend right, "\eta_{*f}"]
      & f_{*}f^{*} \arrow[r, "f_{*}f^{*}\mu_{f}"] \arrow[dr, equals, ""]
      & f_{*}f^{*}f_{!}f^{*} \arrow[d, "f_{*}\nu_{f}f^{*}"]
      \\
      && f_{*}f^{*}
    \end{tikzcd}.
  \]
  The upper diagram commutes by naturality of $\eta_{*f}$, the lower left
  triangle commutes strictly, and the lower right triangle commutes by the
  triangle identity for adjoints, as the ambidexterity of $f$ implies that
  $\mu_{f}$ and $\nu_{f}$ are unit and counit, respectively, of an adjunction
  $f^{*} \dashv f_{!}$.
\end{proof}

%% file: normSquare/normMap.tex
We now turn to the proof of our main theorem, namely that given two
Beck-Chevalley fibrations $\C \to \X$ and $\D \to \X$ and a functor $F \colon
\C \to \D$ which preserves cartesian edges, the resulting \emph{norm square}
commutes. Our proof is inspired by the proof of \cite[Proposition 4.1.1
(1)]{ambidexLurie}.

\begin{theorem}[]\label{thm:norm.square}
   Let $q \colon \C \to \X$ and $p \colon \D \to \X$ be Beck-Chevalley
   fibrations and let $F \colon \C \to \D$ be a functor of $\infty$-categories
   which sends $q$-Cartesian edges in $\C$ to $p$-Cartesian edges in $\D$.
   Suppose the diagram
\[
  \begin{tikzcd}[column sep=normal]
    \C \arrow[rr, "F"] \arrow[dr, "q"'] && \D \arrow[dl, "p"] \\
                      & \X &
  \end{tikzcd}
\]
commutes. Let $f \colon Y \to X$ be a morphism in $\X$, denote the Cartesian
lifts of $f$ by $f^{*} \colon \C_{X} \to \C_{Y}$ and $\d{f^{*}} \colon \D_{X}
\to \D_{Y}$ and denote the coCartesian lifts of $f$ by $f_{!} \colon \C_{Y} \to
\C_{X}$ and $\d{f_{!}} \colon \D_{Y} \to \D_{X}$. Furthermore, letting $\Delta
\colon Y \to Y \times_{X} Y$ denote the diagonal map induced by $f$, denote the
Cartesian lifts of $\Delta$ by $\Delta^{*} \colon \C_{Y \times_{X} Y} \to
\C_{Y}$ and $\d{\Delta^{*}} \colon \D_{Y \times_{X} Y} \to \D_{Y}$. Assume the
following hold:

\begin{itemize}
    \item $f^{*}$ and $\d {f^{*}}$ admit right adjoints $f_{*}$ and $\d{f_{*}}$
        respectively.
    \item $f$ is weakly ambidextrous with respect to $q$ and $p$.
    \item
        The square
        \[
        \begin{tikzcd}
            \D_{Y}
            &\D_{Y\times_{X}Y} \arrow[l, "\d{\Delta^{*}}"']
            \\\C_{Y}	\arrow[u, "F"']
            &\C_{Y\times_{X}Y} \arrow[l, "\Delta^{*}"'] \arrow[u, "F"']
        \end{tikzcd}
    \]
        satisfies the Beck-Chevalley condition.
\end{itemize}
In this situation, the diagram
\begin{equation}\label{eq:norm.square}
  \begin{tikzcd}
    \d{f_{!}}F  \arrow[d, "\Nm_{\d{f}}F"] \arrow[r, "BC_{!}"]
    & Ff_{!} \arrow[d, "F\Nm_{f}"] \\
    \d{f_{*}}F &  Ff_{*} \arrow[l, "BC_{*}"']
  \end{tikzcd}
\end{equation}
of natural transformations of functors in $\Fun(\C_{Y}, \D_{X})$ commutes,
where $BC_{!}$ and $BC_{*}$ are the two Beck-Chevalley transformations of the
diagram
\[
  \begin{tikzcd}[ampersand replacement = \&]
    \D_{Y}	\arrow[from=r, "\d{f^{*}}"']  	\&		\D_{X}  \\ \C_{Y}
    \arrow[from=r, "f^{*}"'] \arrow[u, "F"']						\& \C_{X}
    \arrow[u,"F"']
  \end{tikzcd}
\]
induced by $F$.
\end{theorem}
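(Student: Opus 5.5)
The plan is to transpose the whole square across the adjunction $\d{f^{*}} \dashv \d{f_{*}}$ and so reduce the theorem to a compatibility statement between $F$ and the wrong way counits. Both composites in \cref{eq:norm.square} are maps $\d{f_{!}}F \to \d{f_{*}}F$, so they agree once their adjoints $\d{f^{*}}\d{f_{!}}F \to F$ agree. I would compute these adjoints separately.

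For the left-hand composite, the adjoint of $\Nm_{\d f}F$ is $\d{\nu_{f}}F$ by definition of the norm.

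For the right-hand composite, I would apply $\d{f^{*}}$, follow it with $\varepsilon_{*\d f}$, and then use \Cref{thm:bc.calculus} (2a) to rewrite $\varepsilon_{*\d f}\circ \d{f^{*}}BC_{*}$ as $F\varepsilon_{*f}$ precomposed with the equivalence $\d{f^{*}}F \simeq Ff^{*}$. After cancelling $\eta_{*f}$ against $\varepsilon_{*f}$ by a triangle identity, the adjoint becomes
\[
  \d{f^{*}}\d{f_{!}}F \xrightarrow{\d{f^{*}}BC_{!}} \d{f^{*}}Ff_{!} \simeq Ff^{*}f_{!} \xrightarrow{F\nu_{f}} F .
\]
The theorem is therefore equivalent to the claim $(\ast)$ that $\d{\nu_{f}}F \simeq F\nu_{f}\circ(\simeq)\circ \d{f^{*}}BC_{!}$.

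To prove $(\ast)$, I would unwind $\nu_{f} = \nu_{f}^{n+1}$ as $BC[\sigma]^{-1}$, then $\pi_{1!}\mu_{\Delta}\pi_{2}^{*}$, then the identification $\pi_{1!}\Delta_{!}\Delta^{*}\pi_{2}^{*}\simeq\id$, and do the same in $\D$. The comparison then splits into three compatibilities with $F$.
\begin{enumerate}
  \item Compatibility of $BC[\sigma]$ with $F$. I would get this by pasting: \Cref{thm:bc.pasting} and \Cref{thm:bc.pasting.v} identify both routes with the Beck-Chevalley transformation of the cube formed by $\sigma^{*}$ and $F$.
  \item Compatibility of $\pi_{1!}$ with $F$. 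This is again a pasting statement; it only requires the $BC_{!}$ maps of $F$, not their invertibility.
  \item Compatibility of the wrong way unit $\mu_{\Delta}$ with $F$, where the $\Delta_{!}$ appearing in $\mu_{\Delta}$ is transported along the map $BC_{!}$ for $\Delta$. By hypothesis this map is invertible.
\end{enumerate}
For (3), I would use \Cref{thm:nm.calculus}-style reasoning. Since $\mu_{\Delta}$ and $\nu_{\Delta}$ form the unit and counit of $\Delta^{*}\dashv\Delta_{!}$, compatibility of $\mu_{\Delta}$ with $F$ is equivalent, via mates and the invertibility of $BC_{!}$ for $\Delta$, to compatibility of $\nu_{\Delta}$ with $F$. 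That is precisely $(\ast)$ for $\Delta$, with one lower ambidexterity level. This sets up an induction on $n$. The base case $n=-2$ is immediate, since there $\nu$ is the inverse of $\eta$, and \Cref{thm:bc.calculus} (1b) gives the required compatibility.

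The main obstacle is the inductive step. Applied to $\Delta$, it would seem to require the Beck-Chevalley condition for the diagonal of $\Delta$, and for further iterated diagonals, whereas the hypothesis is only stated for $\Delta$. I would first try to establish, as an auxiliary lemma, that for an \emph{ambidextrous} morphism $g$ the comparison $\nu_{g}$-compatibility holds without any further Beck-Chevalley hypothesis. The argument would transport the adjunction: if the map $BC_{!}$ for $g$ is invertible, then $F$ intertwines the two adjunctions $g^{*}\dashv g_{!}$ given by $\mu_g,\nu_g$. It remains to show that these are the \emph{same} adjunction data, using uniqueness of counits up to the invertible comparison and the characterization of $\nu_g$ by \Cref{def:n-ambidex}.

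If this fails, I would instead strengthen the induction hypothesis to carry the Beck-Chevalley condition for all iterated diagonals. The goal would then be to deduce it from the hypothesis on $\Delta$ together with the facts that iterated diagonals are pullbacks of one another and that \Cref{thm:bc.pasting.v} lets Beck-Chevalley conditions descend along pullback squares.

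Either way, the bulk of the remaining verification is a large but routine diagram chase. It combines \Cref{thm:nat.trans}, the triangle identities, and the pasting lemmas.
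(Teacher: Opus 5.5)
\textbf{This is a genuine gap, and it cannot be closed: the statement as written is false.}

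Most of your outline is the paper's argument: the reduction to $(\ast)$ via \Cref{thm:bc.calculus} (2a), the induction on $n$ with base case from (1b), and the split of the inductive step into compatibilities of $BC[\sigma]$, $\pi_{1!}$ and $\mu_\Delta$ with $F$ (compare \cref{eq:norm.square.mate} and its subdiagrams (1)--(6)). Handling $\mu_\Delta$ by mates for the wrong-way adjunction $\Delta^*\dashv\Delta_!$ is an equivalent variant of the paper's route through $\Nm_\Delta$, $\Nm_{\d{\Delta}}$, \Cref{thm:nm.calculus} and (2b).

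The gap is the one you flag. The step for $\Delta$ needs the Beck--Chevalley condition, with respect to $F$, for the square of the diagonal of $\Delta$, and neither remedy supplies it.
\begin{enumerate}
\item Uniqueness of adjunction data pins counits down only up to an automorphism. So an invertible $BC_!$ for $g$ cannot force $F\nu_g$, transported along $BC_!$, to equal $\nu_{\d{g}}F$. Forcing it means unwinding $\nu_g$ through the diagonal of $g$, which brings the missing condition back.
\item Iterated diagonals are not in general pullbacks of one another, and the condition for $\Delta$ does not imply it for $\Delta_\Delta$.
\end{enumerate}

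\textbf{Counterexample.} Take $\C=\D=\Sp_{\Q}$, which is $\infty$-semiadditive, and let $F$ be postcomposition with the constant functor $\Sp_{\Q}\to\Sp_{\Q}$ at $\Q$. Let $f\colon K(\Z/2,2)\to *$.
\begin{itemize}
\item The fibers of $\Delta$ are $B\Z/2$, which is rationally acyclic. So $F$ preserves $\Delta$-colimits, and every hypothesis of the theorem holds.
\item $K(\Z/2,2)$ is also rationally acyclic. Hence $BC_!$ and $BC_*$ are the augmentation and coaugmentation equivalences, and $F\Nm_f=\id$.
\item On the constant local system, $\Nm_{\d{f}}$ is $|K(\Z/2,2)|^{-1}=\tfrac12$ times that same composite. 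So the square does not commute.
\item With $g=\Delta$, this also refutes your auxiliary lemma: $BC_!$ for $g$ is invertible, yet $(\ast)$ for $g$ is off by the factor $|B\Z/2|^{-1}=2$.
\end{itemize}
The condition that fails is exactly the one for $\Delta_\Delta$. Its fibers are $\Z/2$, and $F$ does not preserve binary coproducts.

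The paper's proof has the same hole: in step (3) it applies the inductive hypothesis to $\Delta$ without checking this condition. The fix is to assume the Beck--Chevalley condition for the squares of all iterated diagonals of $f$. Only finitely many of these are non-trivial, since $f$ is truncated, and $\Delta$ inherits the assumption. With it, your induction goes through as planned. This is what the hypothesis in \cite[Theorem 3.2.3]{1811}, that $F$ preserves $(m-1)$-finite colimits, provides.
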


\begin{proof}
  Denote the diagram
\[
  \begin{tikzcd}[ampersand replacement = \&]
    \D_{Y}	\arrow[from=r, "\d{f^{*}}"'] \arrow[dr, phantom, "\tau"]  	\&		\D_{X}  \\
  \C_{Y}	\arrow[from=r, "f^{*}"'] \arrow[u, "F"']						\&
  \C_{X} \arrow[u,"F"']
  \end{tikzcd}
\]
by $\tau$. As described in \Cref{sec:straightening}, the functor $F$ induces a
natural transformation $F \colon \C_{(-)} \Longrightarrow \D_{(-)}$, so $\tau$
commutes. Hence, the Beck-Chevalley transformations $BC_{!}=BC_{!}[\tau]$ and
$BC_{*} = BC_{*}[\tau]$ exist.
For the following, we will indicate a natural transformations in $\D$ induced
by a morphism $g$ in $\X$ by writing $\d{g}$ in the subscript. For example, the
wrong way counit of $f$ in $\D$ is denoted by $\nu_{\d{f}}$. By the adjunction
$\d{f^{*}} \dashv \d{f_{*}}$, commutativity of \cref{eq:norm.square} is
equivalent to commutativity of
  \[
    \begin{tikzcd}
      \d{f^{*}} \d{f_{!}} F \arrow[rr, "\d{f^{*}}BC_{!}{[\tau]}"] \arrow[d,
      "\d{f^{*}}\Nm_{\d{f}}F"] & & \d{f^{*}} F f_{!} \arrow[d,
      "\d{f^{*}}F\Nm_{f}"] \\
      \d{f^*} \d{f_{*}} F \arrow[d, "\varepsilon_{* \d{f}}F"] & \d{f^*}
      \d{f_{*}} F \arrow[dl, "\varepsilon_{* \d{f}}F"] & \d{f^{*}}F f_{*}
      \arrow[l, "\d{f^{*}} BC_{*}{[\tau]}"'] \\
      F &&
    \end{tikzcd}.
  \]
  By adding some extra natural transformations, we can consider the
  diagram
  \[
    \begin{tikzcd}
      \d{f^{*}} \d{f_{!}} F \arrow[rr, "\d{f^{*}}BC_{!}{[\tau]}"] \arrow[d,
      "\d{f^{*}}\Nm_{\d{f}}F"] \arrow[dd, bend right=50, "\nu_{\d{f}}F"'] &
      & \d{f^{*}} F f_{!} \arrow[d, "\d{f^{*}}F\Nm_{f}"] \arrow[ddr, bend left,
      "\simeq"]
      &
      \\
      \d{f^*} \d{f_{*}} F \arrow[d, "\varepsilon_{* \d{f}}F"]
      & \d{f^*} \d{f_{*}} F \arrow[dl, "\varepsilon_{* \d{f}}F"]
      & \d{f^{*}}F f_{*} \arrow[l, "\d{f^{*}} BC_{*}{[\tau]}"'] \arrow[d,
      "\simeq"]
      &
      \\
      F
      &
      & F f^{*} f_{*} \arrow[ll, "F\varepsilon_{* f}"']
      & F f^{*} f_{!} \arrow[l, "Ff^{*}\Nm_{f}"'] \arrow[lll, bend left,
      "F\nu_{f}"']
    \end{tikzcd}.
  \]
  All the new subdiagrams commute automatically: The diagram on the left
  commutes as $\Nm_{\d{f}}$ is the mate of $\nu_{\d{f}}$, the diagram on the
  bottom commutes as $\Nm_{f}$ is the mate of $\nu_{f}$, the diagram on the
  right commutes by naturality of the equivalence $\simeq$, and the middle
  diagram commutes by \Cref{thm:bc.calculus} (2a). Thus, commutativity of
  \cref{eq:norm.square} is equivalent to commutativity of the diagram
  \begin{equation}\label{eq:norm.square.mate}
    \begin{tikzcd}
    \d{f^{*}} \d{f_{!}} F \arrow[r, "\d{f^{*}}BC{[\tau]}"] \arrow[d,
    "\nu_{\d{f}}F"] &  \d{f^{*}} F f_{!} \arrow[d, "\simeq"] \\ F & Ff^{*}f_{!}
    \arrow[l, "F \nu_{f}"']
    \end{tikzcd}.
  \end{equation}
  As $f$ is weakly ambidextrous, $f$ is weakly $n$-ambidextrous for some
  integer $n \geq -2$. We prove that \cref{eq:norm.square.mate} commutes by
  induction on $n$.

  First assume that $f$ is weakly $(-2)$-ambidextrous. In
  this situation, $f$ is an equivalence and $\nu_{f}$ is defined as an inverse
  of $\eta_{f}$. Showing commutativity of \cref{eq:norm.square.mate} is
  thus equivalent to showing that the diagram
  \[
    \begin{tikzcd}
    \d{f^{*}} \d{f_{!}} F \arrow[r, "\d{f^{*}} BC{[\tau]}"]
      & \d{f^{*}} F f_{!} \arrow[d, "\simeq"]
      \\ F \arrow[u, "\eta_{\d{f}}F"'] \arrow[r, "F\eta_{f}"]
      & F f^{*}f_{!}
    \end{tikzcd}
  \]
  commutes. It does so, by \Cref{thm:bc.calculus} (1b).

  Now, let $n\geq-1$, assume that $f$ is weakly $n$-ambidextrous, and assume that
  we have shown commutativity of \cref{eq:norm.square} for any weakly
  $(n-1)$-ambidextrous map. Consider the pullback square
  \[
    \begin{tikzcd}
      Y \times_{X} Y	\arrow[r, "\pi_1"] \arrow[d, "\pi_2"]
      &Y \arrow[d,"f"]
      \\
      Y	\arrow[r, "f"]
      &X
    \end{tikzcd}
  \]
  in $\X$.
  Denote the lifts of this square into $\C$ and $\D$ by $\sigma$ and
  $\d{\sigma}$ respectively. That is, let $\sigma$ and $\d{\sigma}$ denote the
  squares
  \begin{align*}
    \begin{tikzcd}[ampersand replacement = \&]
      \C_{Y\times_{X} Y}\arrow[dr, phantom, "\sigma"]	  	\&
      \C_{Y} \arrow[l, "\pi_1^{*}"']  \\
      \C_{Y} 	\arrow[u, "\pi_2^{*}"'] \&	\C_{X} \arrow[l, "f^{*}"'] \arrow[u,
      "f^{*}"']
    \end{tikzcd}
    && \text{and}
    && \begin{tikzcd}[ampersand replacement = \&]
      \D_{Y\times_{X} Y}\arrow[dr, phantom, "\d{\sigma}"]	  	\&		\D_{Y}
      \arrow[l, "\d{\pi_1^{*}}"']  \\
      \D_{Y} 	\arrow[u, "\d{\pi_2^{*}}"'] \&	\D_{X} \arrow[l, "\d{f^{*}}"']
      \arrow[u, "\d{f^{*}}"']
    \end{tikzcd}.
  \end{align*}
  As these squares are induced from a pullback square by Beck-Chevalley
  fibrations, the Beck-Chevalley transformations $BC[\sigma]$ and
  $BC[\d{\sigma}]$ are both equivalences.
  As $F$ preserves Cartesian edges, we from the diagonal map $\Delta$ and
  $\pi_1$ induce commuting squares $\rho$ and $\upsilon$ respectively, depicted
  as
  \begin{align*}
    \begin{tikzcd}[ampersand replacement = \&]
      \D_{Y} \arrow[dr, phantom,
      "\rho"]	  	\&		\D_{Y \times_{X}Y} \arrow[l, "\d{\Delta^{*}}"']  \\
      \C_{Y}	\arrow[u, "F"'] \&	\C_{Y \times_{X}Y} \arrow[l,
      "\Delta^{*}"'] \arrow[u, "F"']
    \end{tikzcd}
    &&
    \text{and}
    &&
    \begin{tikzcd}[ampersand replacement = \&]
      \D_{Y \times_{X} Y} \arrow[dr, phantom, "\upsilon"]	  	\&
      \D_{Y}
      \arrow[l, "\d{\pi_1^{*}}"']  \\
      \C_{Y \times_{X} Y}	\arrow[u, "F"'] \&	\C_Y \arrow[l,
      "\pi_1^{*}"'] \arrow[u, "F"']
    \end{tikzcd}.
  \end{align*}
  Also, by definition, as $f$ is weakly $n$-ambidextrous, $\Delta$ is
  $(n-1)$-ambidextrous. In particular, we have wrong way unit maps
  $\mu_{\Delta}$ and $\mu_{\d{\Delta}}$. Recall that the wrong way counit
  $\nu_{f}$ of $f$ is given as the composite
  \[
    \nu_{f} \colon f^{*}f_{!} \xrightarrow{BC[\sigma]^{-1}} \pi_{1!} \pi_{2}^{*}
    \xrightarrow{\pi_{1!} \mu_{\Delta}  \pi_{2}^{*}} \pi_{1!}
    \Delta_{!}\Delta^{*} \pi_2^{*} \simeq \id_{\C_{Y}},
  \]
  and likewise for $\nu_{\d{f}}$.
  Writing out the definition of $\nu_{f}$ and $\nu_{\d{f}}$ we can then show
  that \cref{eq:norm.square.mate} commutes by showing that the following
  diagram commutes:
  \[
    \begin{tikzcd}[row sep=huge, column sep=8em]
      \d{f^{*}} \d{f_{!}} F \arrow[ddddr, phantom, "(1)"] \arrow[dd, "
      \d{f^{*}}BC{[\tau]}"] \arrow[r, "{BC[\d{\sigma}]^{-1}}F"]
      & \d{\pi_{1!}} \d{\pi_{2}^{*}}F \arrow[dd, "\simeq"] \arrow[r,
      "\d{\pi_{1!}}\mu_{\d{\Delta}}  \d{\pi_{2}^{*}} F"{name=UU}]
      & \d{\pi_{1!}} \d{\Delta_{!}} \d{\Delta^{*}} \d{\pi_{2}^{*}}F
      \arrow[d, "\simeq"] \arrow[rdd, bend left, "\simeq"name=UR]
      &
      \\
      &
      & \d{\pi_{1!}} \d{\Delta_{!}}
      \d{\Delta^{*}} F \pi_{2}^{*} \arrow[d, "\simeq"]
      &
      \\ \d{f^{*}}
      F f_{!} \arrow[dd, "\simeq"]
      & \d{\pi_{1!}} F \pi_{2}^{*} \arrow[dd,
      "BC{[\upsilon]}\pi_{2}^{*}"] \arrow[ur, "\d{\pi_{1!}}
      \mu_{\d{\Delta}}F \pi_{2}^{*}"{name=MU, description}] \arrow[dr,
      "\d{\pi_{1!}} F\mu_{\Delta} \pi_{2}^{*}"{name=MD, description}]
      & \d{\pi_{1!}} \d{\Delta_{!}} F \Delta^{*} \pi_{2}^{*}  \arrow[d,
      "\d{\pi_{1!}} BC{[\rho]}\Delta^{*}\pi_{2}^{*}"] \arrow[r,
      "\simeq"]
      & F
      \\
      &
      & \d{\pi_{1!}}F \Delta_{!}\Delta^{*}\pi_{2}^{*}
      \arrow[d, "BC{[\upsilon]}\Delta_{!}\Delta^{*}\pi_{2}^{*}"]
      &
      \\ F
      f^{*}f_{!} \arrow[r, "FBC{[\sigma]}^{-1}"]
      & F\pi_{1!}\pi_{2}^{*}
      \arrow[r, "F\pi_{1!}\mu_{\Delta}\pi_{2}^{*}"{name=DD}]
      & F \pi_{1!}
      \Delta_{!}\Delta^{*}\pi_{2}^{*} \arrow[ruu, bend right, "\simeq"'name=DR]
      &
      \arrow[from=UU, to=MU, phantom, "(2)"]
      \arrow[from=MU, to=MD, phantom, "(3)"]
      \arrow[from=MD, to=DD, phantom, "(4)"]
      \arrow[from=UR, to=3-3, phantom, shift right=0.5em, "(5)"]
      \arrow[from=DR, to=3-3, phantom, shift left=0.5em, "(6)"]
    \end{tikzcd}
  \]
   We prove the diagram commutes by proving that each of the subdiagrams
   (1)-(6) commute:

  (2): Commutativity of diagram (2) follows from commutativity of
  the diagram
  \[
    \begin{tikzcd}[sep=large]
      \d{\pi_{2}^{*}} F \arrow[r, "\mu_{\d{\Delta}} \d{\pi_{2}^{*}} F"]
      \arrow[d, "\simeq"]  &\d{\Delta_!} \d{\Delta^{*}} \d{\pi_{2}^{*}} F
      \arrow[d, "\simeq"]\\ F\pi_{2}^{*} \arrow[r,
      "\mu_{\d{\Delta}}F\pi_{2}^{*}"] &\d{\Delta_{!}} \d{\Delta^{*}}
      F \pi_{2}^{*}
    \end{tikzcd},
  \]
  which commutes by naturality of $\mu_{\d{\Delta}}$.

  (4): Commutativity of diagram (4) follows from commutativity of
  the diagram
  \[
    \begin{tikzcd}[sep=large]
      \d{\pi_{1!}} F \arrow[r, "\d{\pi_{1!}}F\mu_{\Delta}"]
      \arrow[d, "BC{[\upsilon]}"] & \d{\pi_{1!}} F \Delta_{!}\Delta^{*}
      \arrow[d, "BC{[\upsilon]}\Delta_{!}\Delta^{*}"] \\
      F\pi_{1!} \arrow[r, "F\pi_{1!}\mu_{\Delta}"] &
      F\pi_{1!}\Delta_{!}\Delta^{*}
    \end{tikzcd},
  \]
  which commutes by naturality of $BC[\upsilon]$.

  (5):
  We can write out diagram (5) as
  \[
    \begin{tikzcd}
      \d{\pi_{1!}}\d{\Delta_{!}}\d{\Delta^{*}}\d{\pi_{2}^{*}}F \arrow[rdd, bend
      left, "\simeq"]
      \arrow[d, "\simeq"]
      &
      &
      \\ \d{\pi_{1!}}\d{\Delta_{!}}\d{\Delta^*}F \pi_{2}^{*} \arrow[d,
      "\simeq"]
      & & \\
      \d{\pi_{1!}}\d{\Delta_{!}} F\Delta^{*}\pi_{2}^{*} \arrow[r, "\simeq"]
      & \d{\pi_{1!}}\d{\Delta_{!}}F \arrow[r, "\simeq"]
      & F
    \end{tikzcd},
  \]
  and hence it suffices to show commutativity of the diagram
  \begin{equation}\label{eq:dia.5}
    \begin{tikzcd}
      \d{\Delta^{*}}\d{\pi_{2}^{*}}F \arrow[rdd, bend left, "\simeq"] \arrow[d,
      "\simeq"]
      &
      \\ \d{\Delta^{*}}F \pi_{2}^{*} \arrow[d, "\simeq"] &  \\
      F\Delta^{*}\pi_{2}^{*} \arrow[r, "\simeq"] & F
    \end{tikzcd}.
  \end{equation}
The equivalence $\pi_2 \Delta \simeq \id_{Y}$ in $\X$ can be expressed as a
2-simplex $\Delta^{2} \to \X$, depicted as
\[
  \begin{tikzcd}[column sep=normal]
    Y \arrow[rr, equals] \arrow[dr, "\Delta"]
    && Y
    \\
    & Y \times_{X} Y \arrow[ur, "\pi_2"]&
  \end{tikzcd}.
\]
As $F$ preserves Cartesian edges in $\C$, we induce a commuting prism
  \[
\begin{tikzcd}
\D_Y \arrow[rr, equals] \arrow[from=rd, "\d{\Delta^{*}}"']               & &
\D_Y \\
& \D_{Y \times_X Y} \arrow[from=ru, "\d{\pi_2^{*}}"']
& \\
\C_Y \arrow[rr, equals] \arrow[uu, "F"'] \arrow[from=rd, "\Delta^{*}"'] & & \C_Y
\arrow[uu, "F"'] \\
& \C_{Y \times_X Y} \arrow[from=ru, "\pi_2^{*}"'] \arrow[uu, crossing over,
"F"'{yshift=10pt}] &
\end{tikzcd},
\]
i.e. a map $\Delta^{2} \times \Delta^{1} \to \Cat_{\infty}$, which restricts to
the written verticies and edges. Each vertex of \cref{eq:dia.5} corresponds to
a path from $\C_{Y}$ to $\D_{Y}$ in this prism, and the edges of
\cref{eq:dia.5} are the equivalences witnessed by the prism. Thus,
\cref{eq:dia.5} commutes, which shows that diagram (5) commutes.

  (6): For the left vertical composite in this diagram, by
  \Cref{thm:bc.pasting} (1) the composite transformation
  \[
    \d{\pi_{1!}} \d{\Delta_{!}} F \xrightarrow{\d{\pi_{1!}} BC[\rho]}
    \d{\pi_{1!}} F \Delta_{!} \xrightarrow{BC[\upsilon] \Delta_{!}} F
    \pi_{1!} \Delta_{!}
  \]
  can be found as the Beck-Chevalley transformation of the joined square
  \[
    \begin{tikzcd}
      \D_{Y} \arrow[from=r, "\d{\Delta^{*}}"'] \arrow[dr, phantom, "\rho"] &
      \D_{Y \times_{X} Y } \arrow[from=r, "\d{\pi_{1}^{*}}"'] \arrow[dr,
      phantom, "\upsilon"] & \D_{X} \\
      \C_{Y} \arrow[u, "F"'] \arrow[from=r, "\Delta^{*}"'] & \C_{Y \times_{X} Y}
      \arrow[u, "F"'] \arrow[from=r, "\pi_{1}^{*}"'] &  \C_{Y} \arrow[u, "F"']
    \end{tikzcd}.
  \]
  As $\pi_{1} \Delta \simeq \id_{X}$ in $\X$ it follows that both the top
  and bottom horizontal compositions are equivalent to the identity. Thus, the
  Beck-Chevalley transformation of the joined square is equivalent to the
  identity transformation $F \simeq F$. This shows the commutativity of
  (6).

  (1):
  For (1) it suffices to show that
  \[
    \d{\pi_{1!}}\d{\pi_{2}^{*}}F \xrightarrow{BC[\d{\sigma}]F}\d{f^{*}}
    \d{f_{!}} F \xrightarrow{\d{f^{*}}BC[\tau]} \d{f^{*}} Ff_{!} \simeq
    Ff^{*}f_{!}
  \]
  and
  \[
    \d{\pi_{1!}}\d{\pi_{2}^{*}}F \simeq \d{\pi_{1!}} F \pi_2^{*}
    \xrightarrow{BC[\upsilon]\pi_{2}^{*}} F\pi_{1!}\pi_2^{*} \xrightarrow{F BC[\sigma]}
    Ff^{*}f_{!}
  \]
  are equivalent. As $F$ preserves Cartesian edges, we have a commutative cube
  with faces $\d{\sigma}$ and  $\sigma$ on top and bottom respectively, and
  $\nu$ and $\tau$ twice each on the sides, depicted as follows:
  \[
    \begin{tikzcd}[labels=description]
      \D_{X \times_Y X}
      &
      &
      & \D_X \arrow[lll, "\d{\pi_1^{*}}"]
      &
      \\
      & \D_X \arrow[lu, "\d{\pi_2^{*}}"]       &
      &
      & \D_Y  \arrow[lu, "\d{f^{*}}"]
      \\ \C_{X\times_Y X} \arrow[uu, "F"]
      &
      &
      &
      \C_X \arrow[lll, "\pi_1^{*}"]
      &
      \\
      & \C_X \arrow[lu, "\pi_2^{*}"]
      &
      &
      & \C_Y \arrow[lll, "f^{*}"] \arrow[uu, "F"] \arrow[lu, "f^{*}"]
      \arrow[from=3-4, to=1-4, "F" yshift=-1em]
      \arrow[from=4-5, to=1-4, dashed]
      \arrow[from=2-5, to=2-2, crossing over, "\d{f^{*}}"]
      \arrow[from=4-2, to=2-2, crossing over, "F" yshift=1em]
      \arrow[from=4-2, to=1-1, crossing over, dashed]
\end{tikzcd}
\]
As the cube commutes, that is, it represents a diagram $\Delta^{1} \times
\Delta^{1} \times \Delta^{1} \to \Cat_{\infty}$, we obtain a diagonal diagram,
indicated by the dashed lines. Denote this diagram by $\vartheta$. Pasting the
front face and top face, or pasting the bottom face and the back face, both
produce a square equivalent to $\vartheta$. By \Cref{thm:bc.pasting.v} (1) we
therefore see that both
\[
    \d{\pi_{1!}}\d{\pi_{2}^{*}}F \xrightarrow{BC[\d{\sigma}]F}\d{f^{*}}
    \d{f_{!}} F \xrightarrow{\d{f^{*}}BC[\tau]} \d{f^{*}} Ff_{!} \simeq
    Ff^{*}f_{!}
  \]
  and
\[
    \d{\pi_{1!}}\d{\pi_{2}^{*}}F \simeq \d{\pi_{1!}} F \pi_2^{*}
    \xrightarrow{BC[\upsilon]\pi_2^{*}} F\pi_{1!}\pi_2^{*} \xrightarrow{F BC[\sigma]}
    Ff^{*}f_{!},
  \]
  are equivalent to $BC[\vartheta]$, showing that (1) commutes.

(3): To show commutativity of (3) it suffices to show
  commutativity of
\[
  \begin{tikzcd}
    & \d{\Delta_{!}}\d{\Delta^{*}}F \arrow[d, "\simeq"] \\
    F \arrow[ur, "\mu_{\d{\Delta}}F"]
    \arrow[dr, "F\mu_{\Delta}"']
    & \d{\Delta_{!}} F \Delta^{*} \arrow[d, "BC{[\rho]}\Delta^{*}"] \\
    & F\Delta_{!}\Delta^{*}
  \end{tikzcd}.
\]
As $f$ is weakly ambidextrous it follows that the diagonal map $\Delta$ is
ambidextrous so the norm maps $\Nm_{\Delta}$ and $\Nm_{\d{\Delta}}$ are
equivalences. We can thus add intermediate transformations to obtain the
diagram
\[
\begin{tikzcd}[]
    & & & \d{\Delta_{!}}\d{\Delta^{*}}F \arrow[dd, "\simeq"] \\
    & & \d{\Delta_{*}}\d{\Delta^{*}}F \arrow[ru,
    "\Nm_{\d{\Delta}}^{-1}\d{\Delta^{*}}F"description] \arrow[d,
    "\simeq"description] & \\
    F \arrow[rrruu, bend left, "\mu_{\d{\Delta}}F"] \arrow[rru,
    "\eta_{*\d{\Delta}}F"description] \arrow[rrrdd, bend right, "F
    \mu_{\Delta}"'] \arrow[rrd, "F \eta_{*\Delta}"description] &  &
    \d{\Delta_{*}}F \Delta^{*} \arrow[r, "\Nm_{\d{\Delta}}^{-1}F\Delta^{*}"] &
    \d{\Delta_{!}}F \Delta^{*} \arrow[dd, "BC_{!}{[\rho]}\Delta^{*}"] \\
    & & F\Delta_{*}\Delta^{*} \arrow[rd,
    "F\Nm_{\Delta}^{-1}\Delta^{*}"description] \arrow[u,
    "BC_{*}{[\rho]}\Delta^{*}"description] & \\
    & & & F \Delta_{!}\Delta^{*}
\end{tikzcd}.
\]
All the subdiagrams commute: The upper left and lower left triangles both
commute by \Cref{thm:nm.calculus}, the middle triangle commutes by
\Cref{thm:bc.calculus} (2b), and the upper right diagram commutes by naturality
of $\Nm_{\d{\Delta}}^{-1}$. For the lower the lower right diagram, by the inductive
hypothesis the diagram
\[
    \begin{tikzcd}
        \d{\Delta_{!}}F	\arrow[r, "BC_{!}{[\rho]}"] \arrow[d,
        "\Nm_{\d{\Delta}}F"] & F\Delta_{!} \arrow[d,"F \Nm_{\Delta}"] \\
        \d{\Delta_{*}}F	&F\Delta_{*}  \arrow[l, "BC_{*}{[\rho]}"']
    \end{tikzcd}
\]
commutes. As we assumed $BC_{!}[\rho]$ to be an equivalence, and since $\Delta$
being ambidextrous implies that the two norm maps are equivalences, it follows
that $BC_{*}[\rho]$ must also be an equivalence. This implies that
$F\Nm_{\Delta}$ is inverse to to composite
$(BC_{!}[\rho])(\Nm_{\d{\Delta}}^{-1} F)(BC_{*}[\rho])$ and hence that the
lower right diagram commutes. Thus, diagram (3) commutes.

This proves the commutativity of \cref{eq:norm.square.mate}, and hence of
\cref{eq:norm.square}.
\end{proof}

 In most cases we are interested in, where this result would be used, $f^{*}$
 does admit a right adjoint $f_{*}$. There is, however, an analog result in the
 case where $f^{*}$ does not admit a right adjoint.
\begin{corollary} \label{thm:nu.square}
 Let $q \colon \C \to \X$ and $p \colon \D \to \X$ be Beck-Chevalley fibrations
 and let $F \colon \C \to \D$ be a functor of $\infty$-categories which sends
 $q$-Cartesian edges in $\C$ to $p$-Cartesian edges in $\D$. Suppose the
 diagram
 \[
   \begin{tikzcd}[column sep=normal]
   \C \arrow[rr, "F"] \arrow[dr, "q"'] && \D  \arrow[dl, "p"]
   \\ & \X &
   \end{tikzcd}
 \]
commutes. Let $f \colon Y \to X$ be a morphism in $\X$, denote the Cartesian
lifts of $f$ by $f^{*} \colon \C_{X} \to \C_{Y}$ and $\d{f^{*}} \colon \D_{X}
\to \D_{Y}$ and denote the coCartesian lifts of $f$ by $f_{!} \colon \C_{Y} \to
\C_{X}$ and $\d{f_{!}} \colon \D_{Y} \to \D_{X}$. Furthermore, letting $\Delta
: Y \to Y \times_{X} Y$ denote the diagonal map induced by $f$, denote the
    Cartesian lifts of $\Delta$ by $\Delta^{*} \colon \C_{Y \times_{X} Y} \to
    \C_{Y}$ and $\d{\Delta^{*}} \colon \D_{Y \times_{X} Y} \to \D_{Y}$. Assume the
follwing hold:

\begin{itemize}
  \item $f$ is weakly ambidextrous with respect to $q$ and $p$.
    \item
        The square
        \[
        \begin{tikzcd}
            \D_{Y}
            &\D_{Y\times_{X}Y} \arrow[l, "\d{\Delta^{*}}"']
            \\\C_{Y}	\arrow[u, "F"']
            &\C_{Y\times_{X}Y} \arrow[l, "\Delta^{*}"'] \arrow[u, "F"']
        \end{tikzcd}
    \]
        satisfies the Beck-Chevalley condition.
\end{itemize}
In this situation, the diagram
\[
  \begin{tikzcd}
    \d{f^{*}} \d{f_{!}} F	\arrow[r, "\d{f^{*}} BC_{!}"] \arrow[d, "\nu_{\d{f}}"]
    &\d{f^{*}} F f_{!} \arrow[d,"\simeq"]
    \\
    F
    &F f^* f_{!} \arrow[l, "\nu_{f}"']
  \end{tikzcd}
\]
of natural transformations of functors in $\Fun(\C_{Y}, \D_{Y})$ commutes,
where $BC_{!}$ is the Beck-Chevalley transformation of left adjoints of the
diagram
\[
  \begin{tikzcd}
    \D_{Y}
    &\D_{X} \arrow[l, "\d{f^{*}}"']
    \\ \C_{Y}	\arrow[u, "F "']
    &\C_{X} \arrow[l, "f^{*}"'] \arrow[u, "F"']
  \end{tikzcd}
\]
induced by $F$.
\end{corollary}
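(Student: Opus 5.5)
The plan is to observe that the proof of \Cref{thm:norm.square} never used the right adjoints $f_{*}$ and $\d{f_{*}}$ in any essential way once it had been reduced to \cref{eq:norm.square.mate}. The first part of that proof, passing from \cref{eq:norm.square} to \cref{eq:norm.square.mate} through the mates under $\d{f^{*}} \dashv \d{f_{*}}$ and \Cref{thm:bc.calculus} (2a), is the only place where $f_{*}$ for $f$ itself is needed. The diagram in the corollary is exactly \cref{eq:norm.square.mate}. So I would prove the corollary by rerunning the induction on $n$, where $f$ is weakly $n$-ambidextrous, taking \cref{eq:norm.square.mate} itself (not \cref{eq:norm.square}) as the inductive statement.

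The base case $n=-2$ goes through verbatim. There $\nu_{f}$ and $\nu_{\d{f}}$ are inverse to the units $\eta_{f}$ and $\eta_{\d{f}}$, and commutativity reduces to \Cref{thm:bc.calculus} (1b), which needs only left adjoints. For the inductive step I would again write out $\nu_{f}$ and $\nu_{\d{f}}$ through $BC[\sigma]^{-1}$, $\mu_{\Delta}$ and the identifications $\pi_{i}\Delta\simeq\id_{Y}$, and decompose the big diagram into (1)–(6) as before. Subdiagrams (1), (2), (4), (5) and (6) only involve the cube $\vartheta$ and \Cref{thm:bc.pasting.v}, naturality, the prism, and \Cref{thm:bc.pasting}. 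None of these refers to right adjoints of $f$, so they transfer unchanged.

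The main obstacle is subdiagram (3). This is the compatibility $BC_{!}[\rho]\Delta^{*}\circ(\simeq)\circ\mu_{\d{\Delta}}F \simeq F\mu_{\Delta}$, and its earlier proof used norm maps of $\Delta$ together with the inductive hypothesis in the form of \cref{eq:norm.square}. Here I would exploit that $\Delta$ is ambidextrous for both $q$ and $p$, being $(n-1)$-ambidextrous. Hence $\Delta^{*}$ and $\d{\Delta^{*}}$ do admit right adjoints, namely $\Delta_{!}$ and $\d{\Delta_{!}}$ with units $\mu_{\Delta},\mu_{\d{\Delta}}$ and counits $\nu_{\Delta},\nu_{\d{\Delta}}$. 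So I can take $\Delta_{*}:=\Delta_{!}$, for which $\Nm_{\Delta}$ is the identity. Then \Cref{thm:norm.square} applies to $\Delta$ itself, which is weakly $(n-1)$-ambidextrous. Its hypotheses are that right adjoints exist and that the square for the diagonal of $\Delta$ satisfies the Beck-Chevalley condition. I expect the latter either to follow from the fact that $\Delta$'s diagonal is again ambidextrous of lower degree, or it may simply be avoided by using only the inductive instance of \cref{eq:norm.square.mate} for $\Delta$.

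Concretely, I would first try the following: the inductive hypothesis gives $\nu_{\d{\Delta}}F\simeq F\nu_{\Delta}\circ\d{\Delta^{*}}BC_{!}[\rho]$. Taking mates under the adjunctions $\Delta^{*}\dashv\Delta_{!}$ and $\d{\Delta^{*}}\dashv\d{\Delta_{!}}$ then yields a statement about units. By \Cref{thm:bc.calculus} (1b), (2b) applied with $\Delta_{!}$ playing the role of the right adjoint, this says $BC_{!}[\rho]$ is inverse to $BC_{*}[\rho]$. From that, the unit compatibility (3) follows as in the final paragraph of the theorem's proof, with all norm maps replaced by identities. This closes the induction, and with it the proof of the corollary.
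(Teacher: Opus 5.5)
Your reduction matches the paper, whose proof of this corollary just notes that the induction in \Cref{thm:norm.square} establishing \cref{eq:norm.square.mate} never uses $f_{*}$. Your streamlined step (3) is also sound as a deduction. With $\Delta_{*}:=\Delta_{!}$ the norms are identities. Transposing the inductive instance for $\Delta$ via \Cref{thm:bc.calculus}~(2a) gives $BC_{*}[\rho]\circ BC_{!}[\rho]\simeq\id$; this is one-sided, and the invertibility of $BC_{!}[\rho]$ is what makes it two-sided. Then (2b) gives (3).

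The gap is the one you flagged, and neither of your proposed escapes works. The inductive instance of \cref{eq:norm.square.mate} for $\Delta$ \emph{is} the corollary applied to $\Delta$. It therefore requires the $F$-square of $\Delta_{\Delta}\colon Y\to Y\times_{Y\times_{X}Y}Y$ to satisfy Beck--Chevalley. Ambidexterity of $\Delta_{\Delta}$ with respect to $q$ and $p$ separately says nothing about how $F$ interacts with $(\Delta_{\Delta})_{!}$. The paper's own induction takes the same step silently: its inductive hypothesis is stated for all weakly $(n-1)$-ambidextrous maps, with this hypothesis dropped.

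This cannot be patched, because the statement as given is false, and so is \Cref{thm:norm.square}. Take the following data:
\begin{itemize}
\item $\C=\D=\LocSys(\Sp_{\Q})$ over $\S$;
\item $F$ given by postcomposition with the constant functor at a rational spectrum $Z\neq 0$;
\item $f\colon K(\Z/2,2)\to *$.
\end{itemize}
Rational spectra are $\infty$-semiadditive, so $f$ is ambidextrous on both sides. The fibres of $\Delta_{f}$ are $B\Z/2$, which is rationally acyclic. Hence the square for $\Delta_{f}$ satisfies Beck--Chevalley; pointwise it is the augmentation $\colim_{B\Z/2}Z\to Z$.

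Nevertheless, $\nu_{\d{f}}F$ equals $|K(\Z/2,2)|^{-1}=\tfrac12$ times the other composite, which is $f^{*}$ of the augmentation $\colim_{K(\Z/2,2)}Z\to Z$. Here $|A|$ is the homotopy cardinality $\prod_i|\pi_iA|^{(-1)^i}$, i.e.\ the composite $Z\to\lim_A Z\xrightarrow{\Nm^{-1}}\colim_A Z\to Z$.

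The failure sits exactly in your step (3). For the square $\rho$ of $\Delta_{f}$, the composite $BC_{*}[\rho]\circ BC_{!}[\rho]$ is multiplication by $|B\Z/2|=\tfrac12$, not the identity. The induction cannot produce the identity, because the square for $\Delta_{\Delta}$ fails Beck--Chevalley: its fibres are two points, and $F$ does not preserve binary coproducts.

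The correct statement assumes the Beck--Chevalley condition for the $F$-squares of all iterated diagonals $\Delta,\Delta_{\Delta},\dots$. Only finitely many matter, since $f$ is truncated and the squares of equivalences satisfy it automatically. These hypotheses are inherited by $\Delta$, and with them your induction closes as written. This is also why Carmeli--Schlank--Yanovski assume that $F$ preserves all $(m-1)$-finite colimits, rather than only $\Delta$-colimits.
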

\begin{proof}
  In the proof of \Cref{thm:norm.square} we proved commutativity of
  \cref{eq:norm.square.mate}, which is exactly what we want to show.
  When proving this we never used that $f^{*}$ and $\d{f^{*}}$ admit right
  adjoints.
\end{proof}

\begin{remark}[] \label{thm:less.conditions}
  In the proof of \Cref{thm:norm.square} and \Cref{thm:nu.square}, $q$ and $p$
  being Beck-Chevalley fibrations was a larger assumption than needed -- we
  only needed that \emph{some} Cartesian lifts admitted left adjoints, and that
  \emph{some} diagrams satisfied the Beck-Chevalley condition, with respect to
  both $q$ and $p$. To be precise, what we needed was the following:
  \begin{itemize}
    \item In the definition of weak ambidexterity of $f$, the fact that the pullback
      of $f$ along itself satisfies the Beck-Chevalley condition is needed to
      define $\nu_{f}$. Furthermore, it is also needed that the diagonal map
      $\Delta$ of $f$ is ambidextrous, hence that the pullback of $\Delta$
      along itself satisfies the Beck-Chevalley condition. Thus, we need that
      the pullback of $f$ along itself, the pullback of $\Delta$ along itself,
      the pullback of the diagonal of $\Delta$ along itself, and so on,
      satisfies the Beck-Chevalley condition. This is, however, only a finite
      number of diagrams, up to equivalence, as $f$ being weakly ambidextrous
      implies that $f$ is truncated. This collection of Beck-Chevalley
      transformations include $BC[\sigma]$ and $BC[\d{\sigma}]$, which we also
      directly needed to be equivalences in the proof of
      \Cref{thm:norm.square}.

    \item The Cartesian lifts with respect to $q$ and $p$ of $f, \pi_1, \Delta$, the diagonal of $\Delta$
        , and so on, all needed to admit left adjoints in the proof of
        \Cref{thm:norm.square} and the definition of (weak) ambidexterity of
        $f$. Again, this collection is only a finite number of morphisms, up to
        equivalence.
  \end{itemize}
  Thus, if these conditions are satisfied with respect to both $q$ and $p$, the
  maps $q$ and $p$ can just be assumed to be Cartesian fibrations, rather than
  Beck-Chevalley fibrations.
\end{remark}

%% file: normSquare/baseChange.tex
We show a useful way to construct new Beck-Chevalley fibrations from known
ones.
First, let $q$ be a Cartesian and coCartesian fibration, and let a diagram
\[
  \begin{tikzcd}
    & \C \arrow[d, "q"] \\
    \Y \arrow[r, "\alpha"] & \X
  \end{tikzcd}
\]
of $\infty$-categories be given. As described in \Cref{sec:straightening},
pulling $q$ back along $\alpha$ produces another Cartesian and coCartesian
fibration, which we denote by $p \colon \C_{\alpha} \to \Y$. There is a
canonical equivalence of fibers $(\C_{\alpha})_{Y} \simeq \C_{\alpha(Y)}$ for
any $Y \in \Y$, and the straightening of $p$ is equivalent to the functor
$\Y^{\op} \to \Cat_{\infty}$ induced by precomposing $\C_{(-)}$ with
$\alpha^{\op}$. Thus, for any diagram $\sigma$ in $\Y$, the lift of $\sigma$
by $p$ can be computed as the lift of $\alpha(\sigma)$ by $q$.

\begin{lemma}\label{thm:bc.pullback}
  Let $q \colon \C \to \X$ be a Beck-Chevalley fibration, let $\Y$ be an
  $\infty$-category that admits pullbacks, let $\alpha \colon \Y \to \X$ be a
  functor, and let $p \colon \C_{\alpha} \to \Y$ be the Cartesian and
  coCartesian fibration induced by pulling $q$ back along $\alpha$. If $\alpha$
  preserves pullbacks, then $p$ is a Beck-Chevalley fibration.
\end{lemma}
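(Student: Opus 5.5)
We verify the three conditions in the definition of a Beck-Chevalley fibration for $p \colon \C_{\alpha} \to \Y$ directly. Condition (1), that $p$ is a Cartesian and coCartesian fibration, was already recorded in the discussion preceding the lemma, since pullbacks of Cartesian and coCartesian fibrations are again such (\Cref{sec:straightening}). Condition (2) holds because $\Y$ is assumed to admit pullbacks. The content of the lemma is therefore condition (3).

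For (3), let $\sigma$ be a pullback square in $\Y$ with horizontal maps $h \colon \wt Y \to \wt X$ and $f \colon Y \to X$ and vertical maps $g_{Y}$, $g_{X}$. Since $\alpha$ preserves pullbacks, $\alpha(\sigma)$ is a pullback square in $\X$. Because $q$ is a Beck-Chevalley fibration, $BC[\alpha(\sigma)]$ is an equivalence. It remains to show that $BC[\sigma]$, computed with respect to $p$, is identified with $BC[\alpha(\sigma)]$, computed with respect to $q$.

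The straightening of $p$ is $\C_{(-)} \circ \alpha^{\op}$. Under the canonical equivalences $(\C_{\alpha})_{Z} \simeq \C_{\alpha(Z)}$, the following identifications hold:
\begin{itemize}
  \item the Cartesian lifts satisfy $h^{*} \simeq \alpha(h)^{*}$, and similarly for $f$, $g_{Y}$ and $g_{X}$;
  \item the square $\sigma^{*}$ is identified with $\alpha(\sigma)^{*}$, including the equivalence witnessing its commutativity.
\end{itemize}
Left adjoints and their units and counits are unique up to canonical equivalence. Hence the coCartesian lifts $h_{!}$, $f_{!}$ of $p$ are identified with $\alpha(h)_{!}$, $\alpha(f)_{!}$, compatibly with the units $\eta$ and counits $\varepsilon$. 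Concretely, the coCartesian edges of $\C_{\alpha}$ are exactly those mapping to $q$-coCartesian edges in $\C$, so the adjunction data are literally those of $q$ over the image.

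The composite
\[
  BC[\sigma] \colon h_{!}g_{Y}^{*} \xrightarrow{h_{!}g_{Y}^{*}\eta_{f}} h_{!}g_{Y}^{*}f^{*}f_{!} \simeq h_{!}h^{*}g_{X}^{*}f_{!} \xrightarrow{\varepsilon_{h}g_{X}^{*}f_{!}} g_{X}^{*}f_{!}
\]
is built only from these data. It is therefore equivalent to $BC[\alpha(\sigma)]$, and so it is an equivalence.

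The main point needing care is this last identification: the adjunctions $f_{!} \dashv f^{*}$ for $p$ must agree with those for $\alpha(f)$ with respect to $q$, together with their units and counits. We handle it by invoking uniqueness of adjoints (\cite[Proposition 5.2.2.8]{htt}) together with the fiberwise equivalence $(\C_{\alpha})_{Z} \simeq \C_{\alpha(Z)}$, natural in $Z$. This naturality is what the straightening description guarantees.
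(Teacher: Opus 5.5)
Your proposal is correct and follows the same route as the paper, which likewise reduces condition (3) to the observation that the lift of a pullback square $\sigma$ along $p$ is equivalent to the lift of the pullback square $\alpha(\sigma)$ along $q$, so that $BC[\sigma]$ is identified with the equivalence $BC[\alpha(\sigma)]$. You go further than the paper in checking conditions (1) and (2) explicitly and in justifying why the left adjoints, units and counits match under this identification; the paper leaves that compatibility implicit.
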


\begin{proof}
  We need to show  that given any pullback square
  \[
    \begin{tikzcd}
      \wt Y	\arrow[r, "h"] \arrow[d, "g_{Y}"] 	\arrow[dr, phantom, "\sigma"]
      &\wt X \arrow[d,"g_{X}"]
      \\
      Y	\arrow[r, "f"]
      &X
    \end{tikzcd}
  \]
  in $\Y$, the Beck-Chevalley transformation $BC[\sigma]$ induced by $p$ is an
  equivalence. But the Cartesian lift of $\sigma$ into $\C_{\alpha}$ is
  equivalent to the Cartesian lift of $\alpha(\sigma)$ into $\C$. As
  $\alpha$ preserves pullbacks and $q$ is a Beck-Chevalley fibration, it
  follows that $BC[\sigma]$ is an equivalence.
\end{proof}

\begin{definition}[]
  Let $q \colon \C \to \X$ be a Beck-Chevalley fibration, let $\Y$ be an
  $\infty$-category that admits pullbacks, and let $\alpha \colon \Y \to \X$ be
  a pullback preserving functor. By \Cref{thm:bc.pullback}, let $p \colon
  \C_{\alpha} \to \Y$ be the Beck-Chevalley fibration induced by the pullback
  of $q$ along $\alpha$. We refer to $p$ as the \emph{base change of $q$ along
  $\alpha$}.
\end{definition}

We show how ambidexterity is preserved under base change with the following two
results.

\begin{lemma} \label{thm:n-base.change}
  Let $q \colon \C \to \X$ be a Beck-Chevalley fibration, let $\Y$ be an
  $\infty$-category that admits pullbacks, let $\alpha \colon \Y \to
  \X$ be a pullback preserving functor, let $p \colon \C_{\alpha} \to \Y$
  be the base
  change of $q$ along $\alpha$, and let $f \colon Y \to X$ be a morphism in $\Y$.
  If $f$ is $n$-truncated and $\alpha f $ is (weakly) $n$-ambidextrous, then
  $f$ is (weakly) $n$-ambidextrous and $\nu_{f}^{n}$ is canonically equivalent to
  $\nu_{\alpha f}^{n}$.
\end{lemma}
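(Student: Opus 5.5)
We argue by induction on $n \geq -2$, proving the weak and the non-weak statement together at each level. Throughout we use two facts. First, the fibers and Cartesian/coCartesian lifts of $p$ over a diagram in $\Y$ are canonically those of $q$ over its image under $\alpha$. Consequently the adjunctions $f_{!} \dashv f^{*}$, their (co)units, and the Beck-Chevalley transformations $BC[\sigma]$ computed via $p$ are canonically identified with the corresponding data for $\alpha(f)$ and $\alpha(\sigma)$ computed via $q$. Second, $\alpha$ preserves pullbacks. Hence $\alpha$ sends the pullback square of $f$ along itself to the pullback square of $\alpha f$ along itself. It also sends the diagonal $\Delta_{f} \colon Y \to Y \times_{X} Y$ to the diagonal $\Delta_{\alpha f}$, after identifying $\alpha(Y \times_{X} Y) \simeq \alpha Y \times_{\alpha X} \alpha Y$.

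\emph{Base case $n=-2$.} Here $f$ is $(-2)$-truncated, so $f$ is an equivalence in $\Y$, and hence $f$ is both weakly $(-2)$-ambidextrous and $(-2)$-ambidextrous. The map $\nu_{f}^{-2}$ is defined as an inverse of $\eta_{f}$. Under the identification above, $\eta_{f}$ is $\eta_{\alpha f}$, so $\nu_{f}^{-2} \simeq \nu_{\alpha f}^{-2}$. The same argument identifies $\mu_{f}^{-2}$ with $\mu_{\alpha f}^{-2}$.

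\emph{Inductive step.} Suppose the statement holds for $n-1$, and let $f$ be $n$-truncated with $\alpha f$ weakly $n$-ambidextrous. By \cite[Lemma 5.5.6.15]{htt}, $\Delta_{f}$ is $(n-1)$-truncated. Moreover $\alpha(\Delta_{f}) \simeq \Delta_{\alpha f}$ is $(n-1)$-ambidextrous. By the inductive hypothesis (non-weak form), $\Delta_{f}$ is $(n-1)$-ambidextrous, so $f$ is weakly $n$-ambidextrous, and $\mu_{\Delta_{f}} \simeq \mu_{\Delta_{\alpha f}}$. Now $\nu_{f}^{n}$ is built from three ingredients: $BC[\sigma]^{-1}$, the whiskered map $\pi_{1!}\mu_{\Delta}\pi_{2}^{*}$, and the equivalences $\pi_{i}\Delta \simeq \id$. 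Each of these is transported by $\alpha$ to the corresponding ingredient for $\alpha f$. It follows that $\nu_{f}^{n} \simeq \nu_{\alpha f}^{n}$.

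For the non-weak statement, assume in addition that $\alpha f$ is $n$-ambidextrous, and let $h \colon \wt Y \to \wt X$ be a pullback of $f$ in $\Y$. Since pullbacks preserve truncatedness, $h$ is $n$-truncated. Since $\alpha$ preserves pullbacks, $\alpha h$ is a pullback of $\alpha f$, and is therefore weakly $n$-ambidextrous with $\nu_{\alpha h}^{n}$ a counit of an adjunction $\alpha h^{*} \dashv \alpha h_{!}$. Applying the weak case already proved to $h$, we get that $h$ is weakly $n$-ambidextrous and $\nu_{h}^{n} \simeq \nu_{\alpha h}^{n}$, under the identification of $h^{*}, h_{!}$ with $(\alpha h)^{*}, (\alpha h)_{!}$. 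Being a counit transformation is invariant under equivalence of transformations, so $\nu_{h}^{n}$ exhibits $h^{*} \dashv h_{!}$. Hence $f$ is $n$-ambidextrous, and the compatible units $\mu_{f}^{n}$ and $\mu_{\alpha f}^{n}$ agree. This is what the next induction step requires.

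The main obstacle is making the phrase ``canonically equivalent'' honest. One must check that the identification of $p$-data with $q$-data is compatible with everything used to define $\nu$: the Beck-Chevalley transformations, the composition equivalences $(\pi_{1}\Delta)_{!} \simeq \pi_{1!}\Delta_{!}$, and the identifications $\alpha(Y\times_{X}Y) \simeq \alpha Y\times_{\alpha X}\alpha Y$. I would handle this uniformly. The straightening of $p$ is $\C_{(-)}\circ\alpha^{\op}$, so every diagram of functors, adjunctions and mates arising from $p$ over a diagram $K \to \Y$ is literally the one arising from $q$ over $\alpha \circ K$. Since $\nu$ is defined by functorial constructions from such diagrams, the equivalences follow; unwinding this is tedious but formal.
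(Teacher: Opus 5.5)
Your proposal is correct and follows essentially the same route as the paper. Both arguments induct on $n$, use that $\alpha$ carries the self-pullback square and the diagonal of $f$ to those of $\alpha f$, transport the ingredients of $\nu_{f}^{n}$, and then apply the weak case to each pullback $h$ of $f$ to obtain the non-weak statement. Your explicit checks that $\Delta_{f}$ is $(n-1)$-truncated, that $h$ is $n$-truncated, and that the wrong way units $\mu$ are carried through the induction are details the paper's proof leaves implicit but relies on.
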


\begin{proof}
  We induct on $n$. If $f$ is $(-2)$-truncated, then $f$ is an equivalence and
  hence (weakly) ambidextrous. Now, let $n\geq -1$ and assume the statement has been
  shown for any $(n-1)$-truncated map. First, assume that $\alpha f $ is weakly
  $n$-ambidextrous. The diagonal map $\Delta_{f}$ induced by $f$ is the unique
  map fitting into the diagram
  \[
    \begin{tikzcd}
      Y \arrow[drr, bend left, "\id"] \arrow[ddr, bend right,"\id"]
      \arrow[dr, "\Delta_{f}"]
      &&
      \\
      & Y \times_{X} Y \arrow[r, "\pi_{1}"] \arrow[d, "\pi_{2}"]
      & Y \arrow[d, "f"]\\
      &
      Y \arrow[r, "f"]
      &X
    \end{tikzcd},
  \]
  where the square is the pullback of $f$ along itself. Applying
  $\alpha$ to this diagram, as $\alpha$ preserves pullbacks, we see that
  $\alpha(\Delta_{f})$ is equivalent to $\Delta_{\alpha f }$, the diagonal
  map induced by $\alpha f $. As $\alpha f $ is weakly $n$-ambidextrous,
  $\Delta_{\alpha f }$ is $(n-1)$-ambidextrous. By the inductive hypothesis
  $\Delta_{f}$ is $(n-1)$-ambidextrous, hence $f$ is weakly $n$-ambidextrous.

  Now we show that $\nu_{f}^{n}$ and $\nu_{\alpha f}^{n}$ are canonically
  equivalent.
  First, note that the pullback square $\sigma$,
  \[
    \begin{tikzcd}
      Y \times_{X} Y	\arrow[r, "\pi_1"] \arrow[d, "\pi_2"] 	\arrow[dr,
      phantom, "\sigma"]
      &Y \arrow[d,"f"]
      \\
      Y	\arrow[r, "f"]
      &X
    \end{tikzcd},
  \]
 in $\Y$ given by pulling back $f$ along itself is by $\alpha$ send to the
 square $\alpha(\sigma)$, that is,
  \[
    \begin{tikzcd}
      \alpha(Y \times_{X} Y)	\arrow[r, "\alpha\pi_{1}"] \arrow[d, "\alpha
      \pi_{2}"] 	\arrow[dr, phantom, "\alpha(\sigma)"]
      &\alpha(Y) \arrow[d,"\alpha f"]
      \\
      \alpha(Y)	\arrow[r, "\alpha f"]
      &\alpha(X)
    \end{tikzcd}.
  \]
  As $\alpha$ preserves pullbacks, $\alpha(\sigma)$ must be canonically
  equivalent to the pullback square in $\X$ given by pulling $\alpha f$ back
  along itself. This, combined with the definition of the wrong way counit,
  shows that $\nu_{\alpha f}^{n}$ is given as the composite
  \[
    \nu_{\alpha f}^{n} \colon (\alpha f)^{*} (\alpha_{f})_{!}
    \xrightarrow{BC[\alpha(\sigma)]^{-1}} (\alpha \pi_1)_{!} (\alpha \pi_{2})^{*}
    \xrightarrow{(\alpha \pi_{1})_{!} \mu_{\alpha \Delta}^{n-1} (\alpha
    \pi_2)^{*}} (\alpha \pi_1)_{!} (\alpha \Delta)_{!} (\alpha \Delta)^{*}
    (\alpha \pi_2)^{*} \simeq \id_{\C_{\alpha(Y)}}.
  \]
  But this is equivalent to the definition of $\nu_{f}^{n}$ with respect to
  $p$, as any lift along $p$ can just be computed as the lift of the image under
  $\alpha$ by $q$. Thus, $\nu_{\alpha f}^{n}$ and $\nu_{f}^{n}$ are canonically
  equivalent.

  Now, assume that $\alpha f$ is $n$-ambidextrous. We need to show that
  given any pullback square
  \[
    \begin{tikzcd}
      \wt Y	\arrow[r, "h"] \arrow[d, "g_{Y}"]
      &\wt X \arrow[d,"g_{X}"]
      \\
      Y	\arrow[r, "f"]
      &X
    \end{tikzcd}
  \]
  in $\Y$, the morphism $h$ is weakly $n$-ambidextrous and has the property
  that $\smash{\nu_{h}^{n}}$ is the counit of an adjunction $h^{*} \dashv
  h_{!}$. The image of this
  square under $\alpha$ is the square
  \[
    \begin{tikzcd}
      \alpha \wt Y 	\arrow[r, "\alpha h "] \arrow[d, "\alpha g_{Y} "]
      &\alpha \wt X  \arrow[d,"\alpha g_{X} "]
      \\
      \alpha Y 	\arrow[r, "\alpha f "]
      &\alpha X
    \end{tikzcd},
  \]
  which is a pullback square as $\alpha$ preserves pullbacks. As $\alpha f $ is
  $n$-ambidextrous, it follows that $\alpha h$ is weakly ambidextrous and
  that $\nu_{\alpha f}^{n}$ is the counit of an adjunction $(\alpha f)^{*}
  \dashv (\alpha f)_{!}$. We have shown that $\alpha$ reflects weak
  $n$-ambidexterity, and that \smash{$\nu_{\alpha h}^{n}$} and
  \smash{$\nu_{h}^{n}$} are canonically equivalent. It follows that $h$
  is weakly $n$-ambidextrous, and that \smash{$\nu_{h}^{n}$} is the counit
  of an adjunction $h^{*} \dashv h_{!}$, hence $f$ is $n$-ambidextrous.
\end{proof}

\begin{proposition}[]\label{thm:base.change}
  Let $q \colon \C \to \X$ be a Beck-Chevalley fibration, $\Y$ an
  $\infty$-category that admits pullbacks, $\alpha \colon \Y \to \X$ a pullback
  preserving functor and $p \colon \C_{\alpha} \to \Y$ the base change of $q$
  along $\alpha$. Let $f \colon Y \to X$ be a morphism in $\Y$ and assume that
  $f$ is truncated. The following hold:
  \begin{enumerate}
    \item If $\alpha f$ is (weakly) ambidextrous, then $f$ is (weakly)
      ambidextrous and $\nu_{f}$ is canonically equivalent to $\nu_{\alpha f}$.

    \item If $\alpha f$ is weakly ambidextrous and $(\alpha f)^{*}$ admits a
      right adjoint $(\alpha f)_{*}$ then $f^{*}$ admits a right adjoint
      $f_{*}$, and the norm maps $\Nm_{f}$ and $\Nm_{\alpha f}$ are canonically
      equivalent.
  \end{enumerate}
\end{proposition}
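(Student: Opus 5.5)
Part (1) reduces to \Cref{thm:n-base.change} once we choose the right $n$. Suppose $\alpha f$ is (weakly) ambidextrous, so it is (weakly) $m$-ambidextrous for some $m \geq -2$. Since $f$ is truncated, it is $k$-truncated for some $k$. Set $n = \max(m,k)$; as truncatedness is upward closed, $f$ is $n$-truncated. By \Cref{thm:ambidex.coherence} (1) or (2), $\alpha f$ is (weakly) $n$-ambidextrous, and its wrong way counit $\nu^{n}_{\alpha f}$ agrees with $\nu^{m}_{\alpha f}$. \Cref{thm:n-base.change} then shows that $f$ is (weakly) $n$-ambidextrous with $\nu^{n}_{f} \simeq \nu^{n}_{\alpha f}$. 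Hence $f$ is (weakly) ambidextrous in the sense of \Cref{def:ambidex}, and $\nu_f \simeq \nu_{\alpha f}$, since both wrong way counits are independent of the chosen degree up to equivalence by \Cref{thm:ambidex.coherence}.

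For part (2), the key point is that $p$ is pulled back from $q$. This gives canonical identifications $(\C_\alpha)_Y \simeq \C_{\alpha(Y)}$ and $(\C_\alpha)_X \simeq \C_{\alpha(X)}$, under which the Cartesian lift $f^{*}$ for $p$ corresponds to $(\alpha f)^{*}$ for $q$. The mechanism is the one described before \Cref{thm:bc.pullback}: the straightening of $p$ is $\C_{(-)} \circ \alpha^{\op}$. Existence of adjoints is invariant under equivalence of functors, so $f^{*}$ has a right adjoint. We take $f_{*}$ to be $(\alpha f)_{*}$ transported along these identifications, and $\eta_{*f}$ to be $\eta_{*\alpha f}$ transported likewise; this is a unit transformation in the sense of \cite[Definition 5.2.2.7]{htt}. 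The same argument identifies $f_!$ with $(\alpha f)_!$, since the coCartesian lifts of $p$ are those of $q$ along $\alpha$.

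Next I unwind $\Nm_f = f_{*}\nu_f \circ \eta_{*f} f_!$. Under the identifications above, $\eta_{*f}$ corresponds to $\eta_{*\alpha f}$ and $f_!$ to $(\alpha f)_!$. By part (1), which applies because $f$ is truncated and $\alpha f$ is weakly ambidextrous, $\nu_f$ corresponds to $\nu_{\alpha f}$. Composing these identifications gives $\Nm_f \simeq \Nm_{\alpha f}$.

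The main obstacle is coherence: the identifications of $f^{*}, f_!, f_*$ and of the units and counits must all be taken compatibly. In particular, the equivalence $\nu_f\simeq\nu_{\alpha f}$ from \Cref{thm:n-base.change} must be formed with respect to the same identification of $f_!$ used in the norm. I would handle this by fixing the identification once, via the straightening equivalence $\Str(p)\simeq \C_{(-)}\circ\alpha^{\op}$, and noting that every structure map is obtained from it functorially. Where a choice remains, as with the right adjoint, it is unique up to contractible choice, so the resulting norm maps agree up to equivalence.
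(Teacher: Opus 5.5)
Your proposal is correct and follows the paper's proof. For part (1) you choose a common degree $n$ at which $f$ is $n$-truncated and $\alpha f$ is (weakly) $n$-ambidextrous (via \Cref{thm:ambidex.coherence}) and then invoke \Cref{thm:n-base.change}; for part (2) you transport the adjunction $(\alpha f)^{*} \dashv (\alpha f)_{*}$ along the identification of fibers, so that the norm, as the mate of $\nu$, is identified too, which matches the paper. Your explicit choice $n = \max(m,k)$ and your remark about fixing all identifications once via $\Str(p) \simeq \C_{(-)} \circ \alpha^{\op}$ just spell out what the paper states more briefly.
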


\begin{proof}
  (1): For integers $m$ and $n$ with $-2 \leq m\leq n$, being
  $m$-truncated implies being $n$-truncated and by
  \Cref{thm:ambidex.coherence} (1) and (2), being (weakly) $m$-ambidextrous implies being
  (weakly) $n$-ambidextrous. We can thus find some integer $n \geq (-2)$ such
  that $f$ is $n$-truncated and $\alpha f $ is (weakly) $n$-ambidextrous. The
  result now follows from \Cref{thm:n-base.change}.

  (2): By (1), the morphism $f$ is weakly ambidextrous. As $f^{*}$ is
  canonically equivalent to $(\alpha f)^{*}$ it follows that $f^{*}$ admits a
  right adjoint $f_{*}$. The norm maps $\Nm_{f}$ and $\Nm_{\alpha f}$ are by
  definitions given as the mate of $\nu_{f}$ and $\nu_{\alpha f}$ under the
  adjunctions $f^{*} \dashv f_{*}$ and $(\alpha f)^{*} \dashv (\alpha f)_{*}$
  respectively. Both the wrong way counit maps and the adjunctions are
  equivalent under the same identifications, so the norm maps $\Nm_{f}$ and
  $\Nm_{\alpha f}$ must also be equivalent.
\end{proof}

\begin{remark}[]
  The condition that $f$ is truncated is necessary -- the constant map $\Y \to
  \X$ which sends every object of $\Y$ to a designated object $x \in \X$ and
  every
  morphism to the identity morphism trivially preserves pullbacks. But the
  constant map does not necessarily reflect ambidexterity, as not every
  morphism of $\Y$ needs to be truncated.
\end{remark}

Lastly, we show how natural transformations of functors relate to base change
of Beck-Chevalley fibrations.

\begin{proposition}\label{thm:bc.nat.trans}
  Let $q \colon \C \to \X$ be a Beck-Chevalley fibration, let $\Y$ be an
  $\infty$-category that admits pullbacks, let $\alpha, \beta \colon \Y \to \X$
  be pullback preserving functors, and let $F \colon \alpha \Longrightarrow
  \beta$ be a natural transformation. In this situation, $F$ induces a functor
  $F^{*}$ that preserves Cartesian edges and fits into the commutative diagram
  \[
    \begin{tikzcd}[column sep=normal]
      \C_{\beta} \arrow[dr, "p_{\beta}"'] \arrow[rr, "F^{*}"]
      && \C_{\alpha} \arrow[dl, "p_{\alpha}"] \\
      & \Y &
    \end{tikzcd}
  \]
  where $p_{\alpha}$ and $p_{\beta}$ are the base changes of $q$ along $\alpha$
  and $\beta$ respectively.
\end{proposition}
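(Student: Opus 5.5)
We construct $F^{*}$ through straightening, using the dictionary recalled in \Cref{sec:straightening}. There, a functor between Cartesian fibrations over $\Y$ that preserves Cartesian edges and commutes with the projections is the same datum as a natural transformation between the straightened functors $\Y^{\op} \to \Cat_{\infty}$. By the discussion preceding \Cref{thm:bc.pullback}, the straightenings of $p_{\alpha}$ and $p_{\beta}$ are equivalent to the composites $\C_{(-)} \circ \alpha^{\op}$ and $\C_{(-)} \circ \beta^{\op}$. It therefore suffices to produce a natural transformation
\[
  \C_{(-)} \circ \beta^{\op} \Longrightarrow \C_{(-)} \circ \alpha^{\op}
\]
of functors $\Y^{\op} \to \Cat_{\infty}$, and then unstraighten it.

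We obtain this transformation by whiskering. The transformation $F \colon \alpha \Rightarrow \beta$ is a functor $\Y \times \Delta^{1} \to \X$. Taking opposites gives $F^{\op} \colon \beta^{\op} \Rightarrow \alpha^{\op}$, a functor $\Y^{\op} \times (\Delta^{1})^{\op} \to \X^{\op}$; we identify $(\Delta^{1})^{\op} \simeq \Delta^{1}$ so that the direction is reversed. Postcomposing with $\C_{(-)} \colon \X^{\op} \to \Cat_{\infty}$ yields a functor $\Y^{\op} \times \Delta^{1} \to \Cat_{\infty}$ with the following restrictions:
\begin{itemize}
  \item on $\Y^{\op} \times \{0\}$ it is $\C_{(-)} \circ \beta^{\op}$;
  \item on $\Y^{\op} \times \{1\}$ it is $\C_{(-)} \circ \alpha^{\op}$;
  \item on $\{Y\} \times \Delta^{1}$ it is the Cartesian lift $F_{Y}^{*} \colon \C_{\beta(Y)} \to \C_{\alpha(Y)}$ of the component $F_{Y} \colon \alpha(Y) \to \beta(Y)$.
\end{itemize}
By the last bullet point of \Cref{sec:straightening}, such a functor corresponds under the straightening equivalence to a functor $F^{*} \colon \C_{\beta} \to \C_{\alpha}$ over $\Y$ that preserves Cartesian edges. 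This gives the commutative triangle of the statement.

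Alternatively, one can build $F^{*}$ directly: $\C_{\beta} \to \C$ and $\alpha$ assemble into a map to the pullback. The $\Delta^{1}$-family of Cartesian lifts along the components $F_{Y}$ can be obtained by choosing a Cartesian lift of the functor $\C_{\beta} \times \Delta^{1} \to \Y \times \Delta^{1} \to \X$ relative to $q$. This uses the relative lifting property of Cartesian fibrations, \cite[Proposition 3.1.2.1]{htt}, with the identity at the endpoint $1$ replaced appropriately.

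The main obstacle is verifying that the unstraightened functor lands over $\Y$ strictly, and that the identification of the straightening of a pullback with the precomposition is coherent enough for the whiskered transformation to have the stated endpoints. We handle this by invoking the functoriality of straightening in the base, \cite[Proposition 3.2.1.4]{htt}, or we simply take this identification as given, as the paper does in \Cref{sec:straightening}. Cartesian edges are preserved by construction, since $F^{*}$ comes from a natural transformation of straightened functors. The fact that $\alpha$ and $\beta$ preserve pullbacks is not needed for the construction; it only ensures, via \Cref{thm:bc.pullback}, that $p_{\alpha}$ and $p_{\beta}$ are Beck-Chevalley fibrations, so that \Cref{thm:norm.square} can later be applied to $F^{*}$.
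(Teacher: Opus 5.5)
Your proposal is correct and takes essentially the same approach as the paper: you whisker $F^{\op}\colon \beta^{\op}\Rightarrow\alpha^{\op}$ with $\C_{(-)}$ to get a transformation $\C_{\beta(-)}\Rightarrow\C_{\alpha(-)}$, then unstraighten it to a Cartesian-edge-preserving functor $\C_{\beta}\to\C_{\alpha}$ over $\Y$. Your extras go beyond what the paper's proof needs, but they are accurate: the direct lifting construction, and the remark that pullback preservation is only used to make $p_{\alpha}$ and $p_{\beta}$ Beck-Chevalley fibrations.
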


\begin{proof}
  By straightening $p_{\alpha}$ and $p_{\beta}$, we can summarize the data
  given with the diagram
  \[
    \begin{tikzcd}
      \Y^{\op} \arrow[r, bend left, "\alpha^{\op}" name=U]
      \arrow[r, bend right, "\beta^{\op}"' name=D]
      \arrow[Rightarrow, to path=(U)--(D)\tikztonodes, shorten=1.5ex, "F^{\op}"]
      & \X^{\op}  \arrow[r, "\C_{(-)}"]
      & \Cat_{\infty}
    \end{tikzcd}.
  \]
  The horizontal composite of $F^{\op}$ with the identity transformation of
  $\C_{(-)}$ yields a natural transformation $F^{*} \colon \C_{\beta(-)}
  \Longrightarrow \C_{\alpha(-)}$. As described in \Cref{sec:straightening},
  unstraightening this natural transformation yields a functor $F^{*} \colon
  \C_{\beta} \to \C_{\alpha}$ which preserves Cartesian edges and fits into the
  above diagram, as desired.
\end{proof}

%% file: applications/normApplications.tex
We show how \Cref{thm:norm.square} implies
Proposition 4.2.1 (1) of \cite{ambidexLurie} together with
Theorem 3.2.3 and Proposition 3.4.7 of \cite{1811}.

\subsection{Naturality of the Norm}
We have shown \Cref{thm:norm.square}, which shows a certain naturality property
of the norm map when comparing lifts along different Beck-Chevalley fibrations.
We now show how this theorem implies \cite[Proposition 4.2.1
(1)]{ambidexLurie}, which shows a similar naturality property of the norm map
when lifting along a single Beck-Chevalley fibration instead. We first show a
version of this result described in \cite[Remark 4.2.3]{ambidexLurie}, that is,
the situation where the lifts of the weakly ambidextrous maps also admit right
adjoints.
\begin{proposition}[{\cite[Remark 4.2.3]{ambidexLurie}}]
  \label{thm:4.2.1}
  Let $q \colon \C \to \X$ be a Beck-Chevalley fibration and let a
  commutative diagram $\tau$ be given in $\X$, depicted as
  \[
    \begin{tikzcd}
      \wt Y	\arrow[r, "h"] \arrow[d, "g_{Y}"] \arrow[dr, phantom, "\tau"]	&
      \wt X \arrow[d, "g_{X}"]
      \\
      Y	\arrow[r, "f"]							&		X
    \end{tikzcd}.
  \]
  Assume the following hold:
  \begin{itemize}
      \item The Cartesian lifts $f^{*}$ and $h^{*}$ admit right adjoints
          $f_{*}$ and $h_{*}$
      \item $f$ and $h$ are weakly ambidextrous.
      \item The square
          \[
         \begin{tikzcd}
             \wt Y \arrow[r, "\Delta_{h}"]\arrow[d, "g_{Y}"]
             &\wt Y \times_{\wt X} \wt Y   \arrow[d, ""]
             \\ Y	\arrow[r, "\Delta_{f}"]
             & Y \times_{X} Y
         \end{tikzcd}
         \]
         induced by the diagonal maps $\Delta_{f}$ and $\Delta_{h}$ satisfy
         the Beck-Chevalley condition.
  \end{itemize}
  In this situation, the diagram of natural transformations
  \[
    \begin{tikzcd}
      h_{!}g_{X}^{*}	\arrow[r, "BC_{!}{[\tau]}"] \arrow[d, "\Nm_{h}"] 	&
      g_{Y}^{*}f_{!}\arrow[d,"\Nm_{f}"] \\
      h_{*}g_{X}^{*}								&		g_{Y}^{*}f_{*} \arrow[l,
      "BC_{*}{[\tau]}"']
    \end{tikzcd}
  \]
  commutes.
\end{proposition}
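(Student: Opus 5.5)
We will derive the statement from \Cref{thm:norm.square}, applied to two base changes of $q$ over the arrow category, or more simply over $\Delta^1$-indexed data. Concretely, let $\Y = \X^{\Delta^1}$ restricted appropriately; the cleaner route is to take $\Y$ to be the $\infty$-category of morphisms of $\X$ (which admits pullbacks, computed pointwise), and the two pullback-preserving functors $\alpha = \ev_0$ and $\beta = \ev_1 \colon \Y \to \X$. Evaluation at the arrow gives a natural transformation $\ev_0 \Rightarrow \ev_1$. By \Cref{thm:bc.pullback} the base changes $p_{\alpha} \colon \C_{\alpha} \to \Y$ and $p_{\beta} \colon \C_{\beta} \to \Y$ are Beck-Chevalley fibrations, and by \Cref{thm:bc.nat.trans} there is a functor $F^{*} \colon \C_{\beta} \to \C_{\alpha}$ over $\Y$ preserving Cartesian edges. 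The square $\tau$ is a morphism $\phi \colon g_Y \to g_X$ in $\Y$ (with components $h$ and $f$), and its lifts along $p_{\alpha}$ and $p_{\beta}$ are computed as the lifts of $h = \ev_0\phi$ and $f = \ev_1\phi$ along $q$; the component of $F^{*}$ at $g_X$ (resp.\ $g_Y$) is $g_X^{*}$ (resp.\ $g_Y^{*}$).

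Next we check the hypotheses of \Cref{thm:norm.square} for $\phi$, the functor $F^{*}$, $\C_\beta$ in place of $\C$ and $\C_\alpha$ in place of $\D$. Right adjoints: $\phi^{*}$ for $p_\alpha$ is $h^{*}$ and for $p_\beta$ is $f^{*}$, both having right adjoints by assumption. Weak ambidexterity: since $f$ and $h$ are weakly ambidextrous they are truncated by \Cref{thm:ambidex.truncated}, and as pullbacks in $\Y$ are pointwise, $\phi$ is truncated (its diagonal is pointwise the diagonals). Then \Cref{thm:base.change} (1),(2) applied to $\beta$ and to $\alpha$ gives that $\phi$ is weakly ambidextrous with respect to both $p_\beta$ and $p_\alpha$, with $\nu_\phi$ and $\Nm_\phi$ identified with $\nu_f,\Nm_f$ resp.\ $\nu_h,\Nm_h$. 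Diagonal condition: the diagonal $\Delta_\phi$ of $\phi$ in $\Y$ has $\ev_0\Delta_\phi = \Delta_h$ and $\ev_1\Delta_\phi = \Delta_f$, and the square of \Cref{thm:norm.square} for $\Delta_\phi$ and $F^{*}$ is precisely the third assumed square (with vertical functors the Cartesian lifts of $g_Y$ and $\wt Y\times_{\wt X}\wt Y \to Y\times_X Y$), so it satisfies the Beck-Chevalley condition.

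Applying \Cref{thm:norm.square} then yields commutativity of $\Nm_h g_X^{*}$, $BC_!$, $g_Y^{*}\Nm_f$, $BC_*$ for the square induced by $F^{*}$ and $\phi$, which is $\tau^{*}$. The main obstacle is the bookkeeping of identifications: confirming that the commuting square of fibers induced by $F^{*}$ along $\phi$ is canonically equivalent to $\tau^{*}$ (so its two Beck-Chevalley maps are $BC_![\tau]$, $BC_*[\tau]$), and that the identifications of norm maps from \Cref{thm:base.change} (2) are compatible with it. We would handle this via the straightening description in \Cref{sec:straightening}: $F^{*}$ is the whiskering of $\C_{(-)}$ with the evaluation transformation, whose value on $\phi$ is the image under $\C_{(-)}$ of the square $\tau$ viewed as a map $\Delta^1\times\Delta^1 \to \X$; all remaining identifications are the canonical ones for base change.
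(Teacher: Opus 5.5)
Your proposal is correct and is essentially the paper's own proof: you pass to $\Fun(\Delta^{1},\X)$, base change $q$ along $\ev_0,\ev_1$, and get the Cartesian-edge-preserving functor from \Cref{thm:bc.nat.trans}. You then transfer truncatedness, weak ambidexterity and the norm maps via \Cref{thm:base.change}, identify the diagonal square with the assumed one, and apply \Cref{thm:norm.square}. One small payoff of your careful bookkeeping: you use the correct orientation $\phi\colon g_Y\to g_X$, whereas the paper's proof writes $\tau\colon g_X\to g_Y$, and with components $g_Y^{*}$ and $g_X^{*}$ the resulting square actually reads $h_{!}g_Y^{*}\to g_X^{*}f_{!}$ and $g_X^{*}f_{*}\to h_{*}g_Y^{*}$, so the subscripts $g_X,g_Y$ in the displayed statement, which you copied when writing $\Nm_h g_X^{*}$ and $g_Y^{*}\Nm_f$, are swapped.
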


\begin{proof}
    Consider the $\infty$-category $\Fun(\Delta^{1}, \X)$ of morphisms in $\X$.
    This category comes with two maps $\ev_0, \ev_1 \colon \Fun(\Delta^{1}, \X)
    \to \X$ given by evaluation at the initial and final vertex respectively,
    and with a natural transformation $F \colon \ev_0 \to \ev_1$ given by
    $F_{g} = g$ for any $g \in \Fun(\Delta^{1}, \X)$. Since limits of functors
    are computed pointwise, $\Fun(\Delta^{1}, \X)$ admit pullbacks, and the
    functors $\ev_{0}$ and $\ev_1$ preserve pullbacks. Denote by $p_0 \colon
    \C_{\ev_0} \to \Fun(\Delta^{1}, \X)$ and $p_1 \colon \C_{\ev_1} \to
    \Fun(\Delta^{1}, \X)$ the base changes of $q$ along $\ev_0$ and $\ev_1$
    respectively. By \Cref{thm:bc.nat.trans}, we obtain the commuative diagram
  \[
    \begin{tikzcd}[column sep=tiny]
      \C_{\ev_1} \arrow[rr, "F^{*}"] \arrow[dr, "p_{1}"']
      && \C_{\ev_0} \arrow[dl, "p_0"]
   \\ & \Fun(\Delta^{1}, \X) &
    \end{tikzcd}
  \]
  where $F^{*}$ preserves Cartesian edges and which for any morphism $g \in
  \Fun(\Delta^{1}, \X)$ satisfies that the restriction functor $F_{g}^{*}
  \colon (\C_{\ev_1})_{g} \to (\C_{\ev_0})_{g}$ is canonically equivalent to
  Cartesian lift $g^{*} \colon \C_{\ev_1(g)} \to \C_{\ev_0(g)}$ of $g$ by $q$.
  Note that a morphism in $\Fun(\Delta^{1}, \X)$ is simply a square $\Delta^{1}
  \times \Delta^{1} \to \X$. We can thus identify the square $\tau$ with a
  morphism $\tau \colon g_{X} \to g_{Y}$ in $\Fun(\Delta^{1}, \X)$ with
  $\ev_0(\tau) = h$ and $\ev_1(\tau) = f$. As $h$ and $f$ both are weakly
  ambidextrous, these maps are by \Cref{thm:ambidex.truncated} both truncated,
  hence $\tau$ is truncated. Thus, by \Cref{thm:base.change} (1) $\tau$ is
  weakly ambidextrous with respect to both $p_{0}$ and $p_{1}$. Furthermore, by
  \Cref{thm:base.change} (2) the norm map $\Nm_{\tau}$ with respect to $p_{1}$
  exists and is equivalent to $\Nm_{f}$ and the norm map $\Nm_{\tau}$ with
  respect to $p_{0}$ exists, and is equivalent to $\Nm_{h}$. Finally, the
  square induced by the lifts of the diagonal map $\Delta_{\tau}$ which is
  required to satisfy the Beck-Chevalley condition for us to apply
  \Cref{thm:norm.square}, is equivalent to the lift of the square
\[
    \begin{tikzcd}
             \wt Y \arrow[r, "\Delta_{h}"]\arrow[d, "g_{Y}"]
             &\wt Y \times_{\wt X} \wt Y   \arrow[d, ""]
             \\ Y	\arrow[r, "\Delta_{f}"]
             & Y \times_{X} Y
         \end{tikzcd}
\]
along $q$. The result now follows from \Cref{thm:norm.square}.
\end{proof}

The actual version of \cite[Proposition 4.2.1 (1)]{ambidexLurie},
where $f^{*}$ and $h^{*}$ are not assumed to admit right adjoints, is the
following:
\begin{proposition}[{\cite[Proposition 4.2.1 (1)]{ambidexLurie}}]
  Let $q \colon \C \to \X$ be a Beck-Chevalley fibration, and let a pullback
  square $\tau$ be given in $\X$, depicted as
    \[
    \begin{tikzcd}
      \wt Y	\arrow[r, "h"] \arrow[d, "g_{Y}"] \arrow[dr, phantom, "\tau"] 	&
      \wt X \arrow[d,"g_{X}"] \\ Y	\arrow[r, "f"]							&		X
    \end{tikzcd}.
  \]
  If $f$ is weakly ambidextrous, then the diagram of
  natural transformations
   \[
    \begin{tikzcd}
      h^{*} h_{!} g_{X}^{*}	\arrow[r, "BC{[\tau]}"] \arrow[d, "\nu_{h}"]
      &h^{*} g_{Y}^{*}f_{!} \arrow[d,"\simeq"]
      \\
      g_{X}^{*}
      &g_{X}^{*}f^{*}f_{!} \arrow[l, "\nu_{f}"']
    \end{tikzcd}
  \]
  commutes.
\end{proposition}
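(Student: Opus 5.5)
The plan is to run the argument of the proof of \Cref{thm:4.2.1}, but to invoke \Cref{thm:nu.square} in place of \Cref{thm:norm.square}. That corollary needs no right adjoints, and it produces exactly the $\nu$-square in the statement. As before, consider $\Fun(\Delta^{1}, \X)$ with the pullback preserving evaluation functors $\ev_0, \ev_1$ and the natural transformation $F \colon \ev_0 \to \ev_1$. Let $p_0 \colon \C_{\ev_0} \to \Fun(\Delta^1,\X)$ and $p_1 \colon \C_{\ev_1} \to \Fun(\Delta^1,\X)$ be the base changes of $q$, which are Beck-Chevalley fibrations by \Cref{thm:bc.pullback}. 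By \Cref{thm:bc.nat.trans} we obtain a functor $F^{*} \colon \C_{\ev_1} \to \C_{\ev_0}$ over $\Fun(\Delta^1,\X)$ which preserves Cartesian edges. Its restriction to the fiber over an object $g$ is the Cartesian lift $g^{*}$.

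We regard $\tau$ as a morphism $\tau \colon g_X \to g_Y$ in $\Fun(\Delta^1, \X)$, with $\ev_0(\tau) = h$ and $\ev_1(\tau) = f$. We then apply \Cref{thm:nu.square} with $\C_{\ev_1}$ in the role of $\C$, $\C_{\ev_0}$ in the role of $\D$, and $F^{*}$ in the role of $F$. The hypotheses are checked as follows.
\begin{enumerate}
  \item Since $\tau$ is a pullback square, $h$ is a pullback of $f$, so $h$ is weakly ambidextrous by \Cref{thm:ambidex.properties} (4).
  \item Both $f$ and $h$ are truncated by \Cref{thm:ambidex.truncated}, hence $\tau$ is truncated in $\Fun(\Delta^1,\X)$.
  \item By \Cref{thm:base.change} (1), $\tau$ is weakly ambidextrous with respect to $p_1$ and $p_0$. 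Moreover, its wrong way counits are canonically $\nu_f$ and $\nu_h$ respectively.
  \item The remaining hypothesis concerns the square induced by the diagonal $\Delta_\tau$. Unwinding through the base change, this is the square of $q$-Cartesian lifts of the square with top $\Delta_h \colon \wt Y \to \wt Y\times_{\wt X}\wt Y$, bottom $\Delta_f$, and vertical maps $g_Y$ and the induced map. It has the same shape as in \Cref{thm:4.2.1}.
  \item Because $\tau$ is a pullback, this diagonal square is itself a pullback square. This is the same observation used in \Cref{thm:ambidex.properties} (4), and it follows from the pasting law by computing $\wt Y \times_{\wt X} \wt Y \simeq \wt X \times_X (Y \times_X Y)$. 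Hence it satisfies the Beck-Chevalley condition, since $q$ is a Beck-Chevalley fibration.
\end{enumerate}

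\Cref{thm:nu.square} then gives a commuting square relating $\nu$ with respect to $p_0$, $\nu$ with respect to $p_1$, and the Beck-Chevalley transformation of the square formed by $F^{*}$ and the lifts of $\tau$. The last step is to identify this output with the diagram in the statement.
\begin{itemize}
  \item On the relevant fibers, $F^{*}$ is $g_X^{*}$ over $g_X$ and $g_Y^{*}$ over $g_Y$.
  \item The Cartesian lift of $\tau$ along $p_1$ is $f^{*}$, and along $p_0$ it is $h^{*}$. Their coCartesian lifts are $f_!$ and $h_!$.
  \item The square of functors induced by $F^{*}$ is therefore the image of $\tau$ under $q$. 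So its $BC_!$ is $BC[\tau]$, and the equivalence $h^{*}g_Y^{*} \simeq g_X^{*}f^{*}$ is the one witnessing commutativity of $\tau$.
\end{itemize}
Substituting these identifications yields exactly the square in the statement.

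I expect the main obstacle to be this bookkeeping. One must make sure the identifications of fibers, of $F^{*}$ with $g^{*}$, and of the wrong way counits (via \Cref{thm:n-base.change}) are compatible with one another. In particular, the Beck-Chevalley transformation produced by the prism coming from \Cref{thm:bc.nat.trans} must agree with $BC[\tau]$ computed directly in $\C$. To handle this, I would use that all these identifications arise from the single straightened diagram $\Delta^1\times\Delta^1 \to \Cat_\infty$ obtained by applying $\C_{(-)}$ to $\tau$, in the same manner as in the proof of \Cref{thm:4.2.1}.
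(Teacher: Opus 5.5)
Your proposal is correct and is the paper's own argument: rerun the arrow-category proof of \Cref{thm:4.2.1}, obtain weak ambidexterity of $h$ from \Cref{thm:ambidex.properties}~(4), note that the diagonal square is again a pullback and hence satisfies the Beck--Chevalley condition, and invoke \Cref{thm:nu.square} in place of \Cref{thm:norm.square}. One slip, which the paper also makes and which does not affect the argument: with $\ev_0(\tau)=h$ and $\ev_1(\tau)=f$, the square $\tau$ is a morphism $g_Y \to g_X$ in $\Fun(\Delta^1,\X)$, not $g_X \to g_Y$.
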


\begin{proof}
    First note that $h$ is weakly ambidextrous by \Cref{thm:ambidex.properties}
    (4). Futhermore, as mentioned in \Cref{thm:ambidex.properties} (4), since
    $\tau$ is a pullback square, the square
\[
    \begin{tikzcd} \wt Y \arrow[r, "\Delta_{h}"]\arrow[d, "g_{Y}"] &\wt Y
        \times_{\wt X} \wt Y   \arrow[d, ""] \\ Y	\arrow[r, "\Delta_{f}"] & Y
        \times_{X} Y
         \end{tikzcd}
  \]
  is also a pullback square, and thus satisfies the Beck-Chevalley condition.
  The proof is now the same as the proof of \Cref{thm:4.2.1}, except we invoke
  \Cref{thm:nu.square} at the end rather than \Cref{thm:norm.square}.
\end{proof}

\begin{remark}[]
  \Cref{thm:4.2.1} does partially imply \Cref{thm:norm.square}. To see
  this, let a commutative diagram
  \[
    \begin{tikzcd}[column sep=normal]
      \C \arrow[rr, "F"]  \arrow[dr, "q"']
      && \D \arrow[dl, "p"] \\
      &\X&
    \end{tikzcd}
  \]
  of $\infty$-categories be given as in \Cref{thm:norm.square}, that is, $q$
  and $p$ are Beck-Chevalley fibrations, and $F$ is a functor that preserves
  Cartesian edges. Furthermore, let a morphism $f \colon Y \to X$ in $\X$ be
  given, which is weakly ambidextrous with respect to both $q$ and $p$, and
  such that $f^{*}$ in $\C$ and $\d{f^{*}}$ in $\D$ both admit right adjoints.
  By straightening, as described in \Cref{sec:straightening}, this data can
  equivalently be described by the diagram
  \[
    \begin{tikzcd}
      \X^{\op} \arrow[r, bend left, "\C_{(-)}" name=U]
      \arrow[r, bend right, "\D_{(-)}"' name=D]
      \arrow[Rightarrow, to path=(U)--(D)\tikztonodes, shorten=1.5ex, "F"]
      & \Cat_{\infty}
\end{tikzcd}
  \]
  of functors into $\Cat_{\infty}$, with a natural transformation between them.
  This information can be encoded as a single functor
  \[
      \begin{tikzcd}
        (\X \times \Delta^{1})^{\op} \arrow[r, "H"] & \Cat_{\infty}
      \end{tikzcd}
  \]
  which restricted to $\X^{\op} \times \{1\}$ is $\C_{(-)}$, restricted to
  $\X^{\op} \times \{0\}$ is $\D_{(-)}$, and sends any morphism of the form
  $(X, 1) \to (X, 0)$ to the functor $F_{X} \colon \C_{X} \to \D_{Y}$ provided
  by the natural transformation $F$. By unstraightening we thus obtain a
  Cartesian fibration over $\X \times \Delta^{1}$. By checking that the
  necessary squares satisfy the Beck-Chevalley condition, we can with an analog
  of \Cref{thm:less.conditions} apply \Cref{thm:4.2.1} to the pullback square
  \[
    \begin{tikzcd}
      (Y, 1)	\arrow[r, "{(f,1)}"] \arrow[d, ""]
      &(X, 1) \arrow[d,""]
      \\
      (Y, 0)	\arrow[r, "{(f,0)}"]
      &(X, 0)
    \end{tikzcd}
  \]
  in $\X \times \Delta^{1}$ to obtain \Cref{thm:norm.square}.
\end{remark}

\subsection{The Norm for Local Systems}
We show how \Cref{thm:norm.square} implies \cite[Theorem 3.2.3]{1811}, showing
naturality of the norm map with respect to Beck-Chevalley fibrations of local
systems. Recall from \Cref{thm:local.systems} that for $\C$ an
$\infty$-category, there is a Cartesian fibration $q \colon \LocSys(\C) \to \S$
from the $\infty$-category of $\C$-valued local systems to the
$\infty$-category of spaces. This Cartesian fibration is obtained by
unstraightening the functor $\Fun(-, \C) \colon \S^{\op} \to \Cat_{\infty}$,
and thus any fiber $\LocSys(\C)_{A}$ is canonically equivalent to the functor
category $\Fun(A, \C)$, and any Cartesian lift of a morphism $f \colon B \to A$
in $\S$ can be identified with the pre-composition functor $f^{*} \colon
\Fun(A, \C) \to \Fun(B, \C)$. Furthermore, if $\C$ admits colimits, then $q$ is
also a coCartesian fibration, as pre-composition has a left adjoint given by
left Kan extension. We have the following:

\begin{proposition}[4.3.3 Lurie] \label{thm:4.3.3.lurie}
  Let $\C$ be an $\infty$-category which admits colimits. In this situation, the
  forgetful functor
  \[
    \LocSys(\C) \to \S
  \]
  is a Beck-Chevalley fibration.
\end{proposition}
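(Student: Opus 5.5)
We need to verify the three conditions defining a Beck-Chevalley fibration for $q \colon \LocSys(\C) \to \S$. Condition (1) is already recorded in \Cref{thm:local.systems}: $q$ is the unstraightening of $\Fun(-,\C)$, hence a Cartesian fibration. Since $\C$ admits colimits, every restriction $f^{*}$ has a left adjoint $f_{!}$ given by left Kan extension, so $q$ is also coCartesian. Condition (2) holds because $\S$ admits all small limits, in particular pullbacks. The work is therefore in condition (3): for a pullback square
\[
  \begin{tikzcd}
    \wt Y \arrow[r, "h"] \arrow[d, "g_{Y}"] & \wt X \arrow[d, "g_{X}"] \\
    Y \arrow[r, "f"] & X
  \end{tikzcd}
\]
of spaces, we must show that $BC[\sigma] \colon h_{!}g_{Y}^{*} \to g_{X}^{*}f_{!}$ is an equivalence of functors $\Fun(Y,\C) \to \Fun(\wt X,\C)$.

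First, equivalences of functors into $\Fun(\wt X,\C)$ are detected pointwise, so it suffices to evaluate at each point $x \colon * \to \wt X$. I will paste the square $\sigma$ vertically (as in \Cref{thm:bc.pasting.v}) with the pullback square of $h$ along $x$. The right column of the pasted square is then $* \to \wt X \to X$, and by the pasting law the outer rectangle is again a pullback, namely the pullback of $f$ along the point $g_{X}x$. By \Cref{thm:bc.pasting.v}~(1), $x^{*}BC[\sigma]$ composed with $BC$ of the top square equals $BC$ of the outer rectangle. Hence, by two-out-of-three, it suffices to prove the Beck-Chevalley condition for pullback squares whose right-hand vertical map is the inclusion of a point $* \to X$. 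This handles both the top square and the outer rectangle.

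So consider $X' = * \times_{X} Y \to Y$, the fiber $Y_{x}$ over $x \in X$. We must show that the map $\colim_{Y_{x}} \L|_{Y_{x}} \to (f_{!}\L)(x)$ is an equivalence. Here I use the pointwise formula for left Kan extensions (\cite[Lemma 4.3.2.13]{htt}):
\[
  (f_{!}\L)(x) \simeq \colim_{Y \times_{X} X_{/x}} \L,
\]
where $X_{/x}$ is the slice. Since $X$ is a Kan complex, $X_{/x}$ is contractible, and $Y_{x} \to Y\times_{X} X_{/x}$ is an equivalence of $\infty$-groupoids because $Y \to X$ is a map of Kan complexes. (One may take $f$ to be a Kan fibration, so that the strict fiber computes the homotopy pullback.) Colimits are invariant under such equivalences, and $\C$ admits colimits, so we obtain the identification we want.

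The main obstacle is checking that the map produced by the Kan extension formula really is the Beck-Chevalley map $BC[\sigma] = \varepsilon_{i}g^{*}f_{!} \circ i_{!}h^{*}\eta_{f}$, and not merely some equivalence between the same objects. My approach is to observe that the canonical colimit comparison map is, by construction, the mate of the unit $\L \to f^{*}f_{!}\L$ restricted along $Y_{x} \to Y$, and to use \Cref{thm:bc.calculus}~(1b) to identify mates. I would first try working adjointly: show that $BC[\sigma]$ corresponds, under $h_{!} \dashv h^{*}$, to the map induced by the unit, and that this map is the universal cocone. That identification makes the comparison canonical.
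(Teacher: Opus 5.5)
Your proposal is correct and follows essentially the same route as the paper, which defers to \Cref{thm:bc.condition}: reduce pointwise to pullbacks along a point inclusion, then identify $\colim_{Y_x}\L|_{Y_x}$ with $(f_{!}\L)(x)$ using the pointwise Kan extension formula and the contractibility of $X_{/x}$. The paper leaves two steps implicit, and you make both explicit: you justify the reduction with \Cref{thm:bc.pasting.v} plus two-out-of-three, and you check, via \Cref{thm:bc.calculus}~(1b), that $BC[\sigma]$ is the adjunct of the restricted unit cocone (so it is the canonical comparison map, not just some equivalence).
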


\begin{proof}
  This is \cite[Proposition 4.3.3]{ambidexLurie} -- we will prove
  \Cref{thm:bc.condition} below, which is a more general statement.
\end{proof}

Given a functor $F \colon \C \to \D$, we obtain a functor $F_{*} \colon
\LocSys(\C) \to \LocSys(\D)$, induced by the natural transformation $\Fun(-,
\C) \to \Fun(-, \D)$ given by post-composition with $F$. As post-composition
commutes with pre-composition, it follows that $F_{*}$ preserves Cartesian
edges. We first show the following weaker version of \cite[Theorem
3.2.3]{1811}:

\begin{proposition} \label{thm:1811.1}
  Let $F \colon \C \to \D$ be a functor between $\infty$-categories that admit
  colimits and denote by $q$ and $p$ the Beck-Chevalley fibrations $q \colon
  \LocSys(\C) \to \S$ and $p \colon \LocSys(\D) \to \S$. Let $f \colon B \to A$
  be a map of spaces, denote the Cartesian lifts of $f$ by $f^{*} \colon
  \Fun(A, \C) \to \Fun(B, \C)$ and $\d{f^{*}} \colon \Fun(A, \D) \to \Fun(B,
  \D)$ respectively and denote the coCartesians lifts of $f$ by $f_{!} \colon
  \Fun(B, \C) \to \Fun(A, \C)$ and $\d{f_{!}} \colon \Fun(B, \D) \to \Fun(A,
  \D)$ respectively. Furthermore, letting $\Delta \colon B \to B \times_{A} B$
  denote the diagonal map induced by $f$, denote the Cartesian lifts of
  $\Delta$ by $\Delta^{*} \colon \Fun(B \times_{A} B, \C) \to \Fun(B, \C)$ and
  $\d{\Delta^{*}} \colon \Fun(B\times_{A}B , \D) \to \Fun(B, \D)$. Assume the
  following hold:
  \begin{itemize}
      \item $f^{*}$ and $\d{f^{*}}$ admit right adjoints $f_{*}$ and
          $\d{f_{*}}$ respectively.

      \item $f$ is weakly ambidextrous with respect to $q$ and $p$.

      \item The square
          \[
              \begin{tikzcd}
                  \Fun(B, \D) &\Fun(B \times_{A} B, \D) \arrow[l,
                  "\d{\Delta^{*}}"']
                  \\ \Fun(B, \C)	\arrow[u, "F_{*}"'] &\Fun(B\times_{A} B,
                  \C) \arrow[l, "\Delta^{*}"'] \arrow[u, "F_{*}"']
              \end{tikzcd}
          \]
          satisfies the Beck-Chevalley condition.

  \end{itemize}
  In this situation, the diagram
  \[
     \begin{tikzcd}
       \d{f_{!}}F_{*}	\arrow[r, "BC_{!}"] \arrow[d, "\Nm_{\d{f}}"]
       &F_{*}f_{!} \arrow[d,"\Nm_{f}"]
       \\
       \d{f_{*}}F_{*}
       &F_{*}f_{*}\arrow[l, "BC_{*}"']
     \end{tikzcd}
  \]
  commutes, where $BC_{!}$ and $BC_{*}$ are the two Beck-Chevalley
  transformations of the diagram
  \[
    \begin{tikzcd}
      \Fun(B, \D)
      &\Fun(A, \D) \arrow[l, "\d{f^{*}}"']
      \\\Fun(B, \C)	\arrow[u, "F_{*}"']
      &\Fun(A, \C) \arrow[l, "f^{*}"'] \arrow[u, "F_{*}"']
    \end{tikzcd}.
  \]
\end{proposition}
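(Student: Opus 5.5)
This is a direct specialisation of \Cref{thm:norm.square} to $\X = \S$, with $q\colon \LocSys(\C) \to \S$, $p\colon \LocSys(\D) \to \S$ and the induced functor $F_{*}\colon \LocSys(\C) \to \LocSys(\D)$. The plan is to check each hypothesis of that theorem in turn and then read off the conclusion.

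First, by \Cref{thm:4.3.3.lurie}, both $q$ and $p$ are Beck-Chevalley fibrations, since $\C$ and $\D$ admit colimits. Second, $F_{*}$ comes from unstraightening the natural transformation $\Fun(-,\C) \Rightarrow \Fun(-,\D)$ given by post-composition with $F$. By the last bullet point in \Cref{sec:straightening}, it therefore commutes over $\S$ and sends $q$-Cartesian edges to $p$-Cartesian edges. Under the identifications $\LocSys(\C)_{A} \simeq \Fun(A,\C)$, its restriction to fibres is post-composition $F_{*}\colon \Fun(A,\C) \to \Fun(A,\D)$.

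The remaining hypotheses of \Cref{thm:norm.square} are then exactly the listed assumptions:
\begin{itemize}
  \item existence of the right adjoints $f_{*}$ and $\d{f_{*}}$;
  \item weak ambidexterity of $f$ with respect to $q$ and $p$;
  \item the Beck-Chevalley condition for the square induced by $\Delta$.
\end{itemize}
Applying \Cref{thm:norm.square} yields commutativity of \cref{eq:norm.square}, with $F$ replaced by $F_{*}$.

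The one point needing care is that the objects in the statement agree with those in \Cref{thm:norm.square} under the fibre identifications. The functors $f^{*}$, $\Delta^{*}$ and their colored counterparts are the Cartesian lifts, identified with precomposition as in \Cref{thm:local.systems}. The functors $f_{!}$ and $\d{f_{!}}$ are the coCartesian lifts, which are left Kan extensions. The square defining $BC_{!}$ and $BC_{*}$ is the square $\tau$ of \Cref{thm:norm.square}, and it commutes because post-composition commutes with pre-composition. Since $\Nm_{f}$ and $\Nm_{\d{f}}$ are defined intrinsically from these adjunctions, they also match. I expect no real obstacle here: this identification step is only bookkeeping, and the main content is carried entirely by \Cref{thm:norm.square}.
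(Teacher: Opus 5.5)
Your proposal is correct and takes the same approach as the paper, which likewise uses \Cref{thm:4.3.3.lurie} to see that $q$ and $p$ are Beck-Chevalley fibrations and then applies \Cref{thm:norm.square} to $F_{*}$. Your extra bookkeeping — that $F_{*}$ preserves Cartesian edges and that the fibrewise functors match — is left implicit in the paper, which only remarks just before the statement that post-composition commutes with pre-composition.
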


\begin{proof}
  The maps $q \colon \LocSys(\C) \to \X$ and $p \colon \LocSys(\D) \to \X$ are
  in fact Beck-Chevalley fibrations, by \Cref{thm:4.3.3.lurie}. By applying
  \Cref{thm:norm.square}, the result now follows.
\end{proof}
We are interested in a slightly stronger version of this statement, where $\C$
and $\D$ are not assumed to admit all colimits, so $\LocSys(\C) \to \S$ and
$\LocSys(\D) \to \S)$ might not be Beck-Chevalley fibrations. One way of doing
this is by considering \Cref{thm:less.conditions} and keeping track of what
limits and colimits we need. We will do this another way, by constructing a
fully faithful functor $j \colon \C \to \wt \C$ into an $\infty$-category $\wt
\C$ which admits all colimits so $\LocSys(\wt \C) \to \S$ is a
Beck-Chevalley fibration. First, we introduce the following terminology from
\cite{1811}:

\begin{definition}[]
  Let $f \colon B \to A$ be a map of spaces, and let $\C$ be an
  $\infty$-category. We say that $\C$ admits \emph{$f$-limits}, respectively
  \emph{$f$-colimits}, if $\C$ admits limits, respectively colimits, of shape
  $f^{-1}\{a\}$ for all $a \in A$.
\end{definition}

This terminology is relevant in the following sense:

\begin{lemma}[] \label{thm:kan.extension}
Let $f\colon B \to A$ be a map of spaces, let $\C$ be an $\infty$-category, and
let $f^{*} \colon \Fun(A, \C) \to \Fun(B, \C)$ be the induced pre-composition
functor. If $\C$ admits $f$-colimits, then $f^{*}$ admits a left adjoint
$f_{!}$. Likewise, if $\C$ admits $f$-limits, then $f^{*}$ admits a right
adjoint $f_{*}$.
\end{lemma}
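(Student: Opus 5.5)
The plan is to realise $f_{!}$ and $f_{*}$ as pointwise left and right Kan extensions along $f$ and to compute them fibrewise. I will only treat the colimit case in detail, since the limit case is the formal dual: apply the colimit case to $\C^{\op}$, using $\Fun(A,\C)^{\op} \simeq \Fun(A^{\op},\C^{\op}) \simeq \Fun(A,\C^{\op})$ because $A$ is a space.

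By the pointwise criterion for left Kan extensions (\cite[Lemma 4.3.2.13]{htt}), $f^{*}$ admits a left adjoint as soon as, for every $L \in \Fun(B,\C)$ and every $a \in A$, the colimit of
\[
  B \times_{A} A_{/a} \to B \xrightarrow{L} \C
\]
exists in $\C$. The resulting functor $f_{!}$ has value at $a$ given by this colimit. Functoriality in $L$, together with the unit $\id \to f^{*}f_{!}$, then comes from \cite[Proposition 4.3.2.17]{htt}, which shows that the (partially defined) left Kan extension is left adjoint to restriction on the full subcategory of diagrams that admit a Kan extension. By assumption this subcategory is all of $\Fun(B,\C)$.

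It remains to compare the indexing category with $f^{-1}\{a\}$. Since $A$ is a Kan complex, $A_{/a}$ is contractible: the map $A_{/a} \to A$ is a right fibration with contractible total space, and its fibre over $a'$ is $\Map_{A}(a',a)$. The key step is that the inclusion
\[
  f^{-1}\{a\} \simeq B \times_{A} \{a\} \to B \times_{A} A_{/a}
\]
is cofinal, indeed an equivalence of spaces after replacing $f$ by a Kan fibration. Concretely, first factor $f$ up to equivalence as a Kan fibration, which changes nothing about $\Fun(B,\C)$ or $f^{*}$. Then $B \times_{A} A_{/a}$ is the homotopy fibre of $f$ over $a$. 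Since the strict fibre of a Kan fibration computes the homotopy fibre, the inclusion is a homotopy equivalence of Kan complexes and hence cofinal.

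I expect this step to be the main subtlety. One must make sure that the notion of ``colimits of shape $f^{-1}\{a\}$'' in the definition is read homotopy invariantly, that is, as the homotopy fibre, so that replacing $f$ by a fibration is legitimate. Once that is settled, the colimit over $B \times_{A} A_{/a}$ exists because $\C$ admits $f$-colimits, and we obtain $f_{!}$ with $(f_{!}L)(a) \simeq \colim_{f^{-1}\{a\}} L$.
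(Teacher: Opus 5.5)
Your proposal is correct and takes essentially the same approach as the paper. You compute $f_{!}$ as a pointwise left Kan extension with $(f_{!}L)(a) \simeq \colim_{B\times_{A}A_{/a}} L$, use contractibility of $A_{/a}$ to identify this indexing space with the fibre $f^{-1}\{a\}$, and dualize to get $f_{*}$. Your treatment is more careful than the paper's, which silently reads $f^{-1}\{a\}$ as the homotopy fibre. One nitpick: \cite[Lemma 4.3.2.13, Proposition 4.3.2.17]{htt} are stated for inclusions of full subcategories, so for a general $f$ you should also invoke the reduction in \cite[Section 4.3.3]{htt}.
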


\begin{proof}
If $\C$ has sufficient colimits, then we can compute the left adjoint $f_{!}$
of $f^{*}$ pointwise, by left Kan extension: Given a functor $F \in \Fun(B,
\C)$ computing the colimit
\[
  f_{!}F (a) \simeq \colim_{A_{/a} \times_{A} B} F,
\]
for each $a \in A$ computes $f_{!}F$, where $F \colon A_{/a} \times_{A} B \to
\C$ is the diagram given by $(f(b) \to a) \mapsto F(b)$. As $A$ is a space $A$
is an $\infty$-groupoid, so the space $A_{/a}$ is contractible. Thus, this
diagram is canonically equivalent to the diagram $F_{|f^{-1}\{a\}} \colon
f^{-1}\{a\} \to \C$ given by restriction of $F$. It follows that left Kan
extension produces a left adjoint $f_{!}$ to $f^{*}$ if $\C$ admits
$f$-colimits. A similar argument, using right Kan extension instead of
left, shows that that $f^{*}$ admits a right adjoint
$f_{*}$ if $\C$ admits $f$-limits.
\end{proof}

\begin{remark}[] \label{thm:projection.fiber}
  Given a map of spaces $f \colon B \to A$, consider the pasting of pullback
  diagrams,
  \[
    \begin{tikzcd}
      \pi_2^{-1}\{b\} \arrow[r, ""] \arrow[d, ""]
      & B \times_{A} B \arrow[r, "\pi_{1}"] \arrow[d, "\pi_{2}"]
      &  B \arrow[d, "f"]
      \\ \{b\} \arrow[r, hookrightarrow, ""]
      & B \arrow[r, "f"]
      & A
    \end{tikzcd}.
  \]
  From this it follows that the fibers of the projection maps $\pi_1$ and
  $\pi_2$ are canonically equivalent to the non-empty fibers of $f$. Thus,
  $\pi_{1}^{*}$ and $\pi_{2}^{*}$ also admit left and right adjoints if $\C$
  admits $f$-colimits and $f$-limits respectively.
\end{remark}

\Cref{thm:kan.extension} leads to the following generalization of
\Cref{thm:4.3.3.lurie}. Our proof is inspired by \cite[Lemma 6.1.6.3]{ha}:
\begin{proposition}[]\label{thm:bc.condition}
  Let $\C$ an $\infty$-category and let a pullback square
  \[
    \begin{tikzcd}
      \wt B	\arrow[r, "h"] \arrow[d, "g_{B}"]
      &\wt A \arrow[d,"g_{A}"]
      \\
      B	\arrow[r, "f"]
      &A
    \end{tikzcd}
  \]
  in $\S$ be given. Let $\sigma$ denote the induced diagram
  \[
    \begin{tikzcd}
      \Fun(\wt B, \C)	  	\arrow[dr, phantom, "\sigma"]
      &\Fun(\wt A, \C) \arrow[l, "h^{*}"']
      \\\Fun(B, \C)	\arrow[u, "g_{B}^{*}"']
      &\Fun(A, \C) \arrow[l, "f^{*}"'] \arrow[u, "g_{A}^{*}"']
    \end{tikzcd}
  \]
  of local systems. In this situation, the following hold:
  \begin{enumerate}
    \item If $\C$ admits $f$-colimits, then the Beck-Chevalley transformation
      $BC_{!}[\sigma] \colon h_{!} g_{B}^{*} \to g_{A}^{*}f_{!}$ exists and is
      an equivalence, that is, $\sigma$ satisfies the Beck-Chevalley condition.

    \item If $\C$ admits $f$-limits, then the Beck-Chevalley transformation
      $BC_{*}[\sigma] \colon \wt g_{A}^{*}f_{*} \to f_{*} g_{B}^{*}$ exists
      and is an equivalence.
  \end{enumerate}
\end{proposition}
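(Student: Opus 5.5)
We prove (1); (2) follows by the dual argument, or by applying (1) to $\C^{\op}$. Replacing $\C$ by $\C^{\op}$ exchanges $f$-limits with $f$-colimits, left Kan extensions with right Kan extensions, and $BC_{!}$ with $BC_{*}$.

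\textbf{Existence.} By \Cref{thm:kan.extension}, $f^{*}$ admits a left adjoint $f_{!}$, since $\C$ admits $f$-colimits. Since $\sigma$ is a pullback square, the fibers of $h$ are canonically equivalent to fibers of $f$: for $\tilde a \in \wt A$ we have $h^{-1}\{\tilde a\} \simeq f^{-1}\{g_{A}(\tilde a)\}$, by pasting pullbacks as in \Cref{thm:projection.fiber}. Hence $\C$ admits $h$-colimits, and $h_{!}$ exists by \Cref{thm:kan.extension} again. Therefore $BC_{!}[\sigma]$ is defined.

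\textbf{Pointwise reduction.} To show $BC_{!}[\sigma]$ is an equivalence, it suffices to check it objectwise. That is, for $\L \in \Fun(B,\C)$ and $\tilde a \in \wt A$, we check that
\[
  (h_{!}g_{B}^{*}\L)(\tilde a) \to (f_{!}\L)(g_{A}\tilde a)
\]
is an equivalence in $\C$. By the pointwise formula from the proof of \Cref{thm:kan.extension}, the source is $\colim_{h^{-1}\{\tilde a\}} (g_{B}^{*}\L)|_{h^{-1}\{\tilde a\}}$ and the target is $\colim_{f^{-1}\{g_A\tilde a\}} \L|_{f^{-1}\{g_A\tilde a\}}$. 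The map $g_{B}$ restricts to an equivalence $h^{-1}\{\tilde a\} \xrightarrow{\simeq} f^{-1}\{g_{A}\tilde a\}$, and the two diagrams agree along it. So the two colimits are abstractly equivalent. What remains is to show that $BC_{!}[\sigma]$ evaluated at $\tilde a$ \emph{is} the canonical comparison map of colimits induced by restriction along $g_B$.

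\textbf{Identifying the map (main obstacle).} This is the step I expect to be hardest. Following \cite[Lemma 6.1.6.3]{ha}, I would first reduce to $\wt A = \{\tilde a\}$ a point. To do this, paste $\sigma$ vertically with the pullback square obtained by restricting along $\{\tilde a\} \hookrightarrow \wt A$. Apply \Cref{thm:bc.pasting.v} (1), and use that evaluation at $\tilde a$, i.e.\ $\tilde a^{*}$, commutes with left Kan extension along $h$ when its top square is a pullback. This latter fact is the case of a square whose bottom edge is $h$ and whose right edge is a point inclusion. It is exactly the pointwise formula for $h_{!}$, so that Beck-Chevalley map is an equivalence.

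By this pasting, $\tilde a^{*}BC_{!}[\sigma]$ becomes the Beck-Chevalley map of the outer square with corner $\{\tilde a\}$. That square has the form
\[
  F \to B \xrightarrow{f} A \leftarrow \{a\},
\]
with $F = f^{-1}\{a\}$ and $a = g_{A}(\tilde a)$. Its Beck-Chevalley map is
\[
  \colim_{F} \L|_{F} \to (f_{!}\L)(a).
\]
Unwinding the definition via the unit $\eta_f$ and the counit of $\colim \dashv \mathrm{const}$, this map is the map induced by the components $\L(b) \to (f^{*}f_{!}\L)(b) = (f_{!}\L)(a)$ for $b \in F$. These components are exactly the colimit cocone of the pointwise left Kan extension. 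Hence the map is an equivalence, which proves (1).
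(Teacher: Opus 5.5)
Your proposal is correct and follows essentially the same route as the paper: reduce to the case $\wt A = \{*\}$, then compare both sides as colimits of $\L$ over the fiber $f^{-1}\{a\}$. Your version is more careful in two places the paper only asserts: you justify the reduction to a point by vertical pasting (\Cref{thm:bc.pasting.v} (1)), and you check that $BC_{!}[\sigma]$ actually is the canonical comparison map induced by the unit cocone.
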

\begin{proof}
  We show (1). Statement (2) can be shown by a similar argument using right Kan extension
  rather than left. The existence of a left adjoint $f_{!}$ to $f^{*}$ follows
  from \Cref{thm:kan.extension}. Furthermore, as $h$ is the pullback of $f$,
  $h^{*}$ also admits a left adjoint $f_{!}$. Thus, the Beck-Chevalley
  transformation $BC_{!}[\sigma] \colon h_{!} g_{B}^{*} \to g_{A}^{*}f_{!}$
  exists. As we already have a natural transformation, we just need to show it
  is pointwise an equivalence. That is, for each $F \in \Fun(B, \C)$ we must
  show that $BC_{!}[\sigma]_{F} \colon h_{!} g_{B}^{*}F \to g_{A}^{*}f_{!}F$ is
  an equivalence of functors in $\Fun(\wt A, \C)$. A natural transformation of
  functors being an equivalence just means that it is an equivalence at each point.
  We can thus reduce to the case where $\wt A = \{*\}$, the terminal space.
  That is, we have to show that for the pullback diagram
  \[
    \begin{tikzcd}
      B_{a}	\arrow[r, "p"] \arrow[d, hookrightarrow, "j"]
      &\{a\} \arrow[d, hookrightarrow, "i"]
      \\
      B	\arrow[r, "f"]
      &A
    \end{tikzcd},
  \]
  the Beck-Chevalley transformation $p_{!}j^{*} \to i^{*}f_{!}$ is an
  equivalence. Given a functor $F \in \Fun(B, \C)$, the left Kan extension
  $p_{!}F$ is given by computing the colimit of $F$, as $p$ is the projection
  to a point. So,
  \[
    p_{!}j^{*} (F) (a) \simeq p_{!} (Fj) (a) \simeq \colim_{B_{a}} F_{|B_{a}}.
  \]
  For the other composite, we have
  \[
    i^{*}f_{!}(F)(a) \simeq (f_{!}F) (a) \simeq \colim_{A_{/a} \times_{A} B} F,
  \]
  by the formula for pointwise left Kan extension. These two colimits both exist
  and are canonically equivalent, as is described in the proof of
  \Cref{thm:kan.extension}. Thus, $BC_{!}[\sigma]$ is an equivalence.
\end{proof}

We will need the following Lemma to prove \cite[Theorem 3.2.3]{1811}:
\begin{lemma}[]\label{thm:delta.bc}
    Let $F \colon \C \to \D$ be a functor of $\infty$-categories and $f \colon
    B \to A$ a map of spaces. If $\C$ and $\D$  admit, and $F$ preserves, all
    $f$-colimits, then the square
    \[
        \begin{tikzcd}
            \Fun(B, \D)
            &\Fun(A, \D) \arrow[l, "\d{f^{*}}"']
            \\\Fun(B, \C)	\arrow[u, "F_{*}"']
            &\Fun(A, \C) \arrow[l, "f^{*}"'] \arrow[u, "F_{*}"']
        \end{tikzcd}
    \]
    satisfies Beck-Chevalley condition.
\end{lemma}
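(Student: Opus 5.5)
The plan is to check the Beck-Chevalley condition pointwise, reducing it to the statement that $F$ preserves the colimits computing the left Kan extensions. First, by \Cref{thm:kan.extension}, the $f$-colimits in $\C$ and $\D$ give left adjoints $f_{!}$ and $\d{f_{!}}$ to $f^{*}$ and $\d{f^{*}}$. Hence the transformation
\[
BC_{!} \colon \d{f_{!}} F_{*} \to F_{*} f_{!}
\]
of functors $\Fun(B,\C) \to \Fun(A,\D)$ exists. (Here $BC_{!}$ goes from $\d{f_{!}}F_{*}$ to $F_{*}f_{!}$, matching the orientation in \Cref{thm:norm.square}.) An equivalence in $\Fun(A,\D)$ is detected pointwise. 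So it suffices to fix $G \in \Fun(B,\C)$ and a point $a \in A$, and to show that $\ev_a(BC_{!})_G$ is an equivalence in $\D$.

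Next, I evaluate at $a$ by pasting with the pullback square of $\{a\} \hookrightarrow A$ along $f$, whose top map is $p \colon B_a \to \{a\}$ and whose left map is $j \colon B_a \hookrightarrow B$. Concretely, I consider the $2\times 2$ grid formed by the two pullback squares of local systems, with $\Fun(-,\C)$ on one row and $\Fun(-,\D)$ on the other, stacked with $F_{*}$ squares. By \Cref{thm:bc.condition} (1), the squares coming from the pullback in $\S$ satisfy the Beck-Chevalley condition for both $\C$ and $\D$: they identify $i^{*}f_{!} \simeq p_{!}j^{*}$, and likewise for $\D$. The composite square commutes in two ways, so I can use \Cref{thm:bc.pasting} and \Cref{thm:bc.pasting.v}. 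Together with the facts that $i^{*}$ and $j^{*}$ commute with $F_{*}$ and that restriction squares of this type have trivial Beck-Chevalley maps on the $*$-side, these identify $i^{*}BC_{!}[F_{*},f]$, up to the Beck-Chevalley equivalences for the pullback, with $BC_{!}[F_{*},p]\,j^{*}$. The latter is the Beck-Chevalley transformation of the square for $p \colon B_a \to *$.

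Finally, for $p \colon B_a \to *$, the functor $p_{!}$ is $\colim_{B_a}$. The transformation $BC_{!}$ is then the canonical comparison map
\[
\colim_{B_a}(F\circ G|_{B_a}) \to F(\colim_{B_a} G|_{B_a}),
\]
obtained by applying the unit and then the counit. This map is an equivalence precisely because $F$ preserves colimits of shape $f^{-1}\{a\}$.

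I expect the main obstacle to be the middle step: identifying the restricted $BC_{!}$ with the Beck-Chevalley map of the fiber square coherently, rather than only objectwise. I would handle it through the cube-pasting argument used in part (1) of the proof of \Cref{thm:norm.square}. The cube has the $F_{*}$ squares on two faces and the pullback squares of local systems on the others, and applying \Cref{thm:bc.pasting.v} (1) along both decompositions shows that both composites are equivalent to the Beck-Chevalley transformation of the diagonal square. The last step also needs a short check that the abstract unit/counit map agrees with the canonical colimit-comparison map. This follows because $p^{*}$ is the constant-diagram functor and the unit and counit of $\colim \dashv \mathrm{const}$ are the colimit cocone and its induced map.
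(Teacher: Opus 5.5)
Your proof is correct and follows the route the paper indicates. The paper only cites \cite[Proposition 3.2.2]{1811} and says the result follows from the pointwise formula for left Kan extension, and your argument makes that precise: you restrict to a point $a$, use the base-change equivalences of \Cref{thm:bc.condition} (1) for both $\C$ and $\D$ together with the cube and \Cref{thm:bc.pasting.v} (1) to identify $\d{i^{*}}BC_{!}$ with the Beck-Chevalley map of the fiber square for $B_a \to \{a\}$, and recognize the latter as the comparison map $\colim_{B_a}(F\circ G|_{B_a}) \to F(\colim_{B_a} G|_{B_a})$. The only superfluous part is the remark about restriction squares having ``trivial Beck-Chevalley maps on the $*$-side''; the side faces of the cube are needed only to supply the identifications $F_{*}j^{*}\simeq \d{j^{*}}F_{*}$ and $F_{*}i^{*}\simeq \d{i^{*}}F_{*}$.
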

\begin{proof}
    See \cite[Proposition
    3.2.2]{1811} -- the proof follows from the
    point-wise formulas for left Kan extension.
\end{proof}

We now state the definition of ambidexterity of maps of spaces as it is given
in \cite[Definition 3.1.5]{1811}. Note that \Cref{thm:bc.condition} is needed
in this construction, in the same way that we needed certain squares to satisfy
the Beck-Chevalley condition in \Cref{def:n-ambidex}.
\begin{definition}[] \label{def:space.ambidex}
  Let $\C$ be an $\infty$-category and $n\geq -2$ an integer. A map of spaces
  $f \colon B \to A$ is called
  \begin{enumerate}
    \item \emph{weakly $n$-$\C$-ambidextrous} if $f$ is $n$-truncated, $\C$
      admits $f$-limits and $f$-colimits, and either of two holds:
      \begin{itemize}
        \item $n=-2$, in which case the inverse of $f^{*}$ is both a left and
          right adjoint of $f^{*}$. We define the \emph{canonical norm} map on
          $f^{*}$ to be the identity of some inverse of $f^{*}$.
        \item $n\geq-1$, and the diagonal $\Delta \colon B \to B\times_{A}B$ of
          $f$ is $(n-1)$-$\C$-ambidextrous. In this case we define the
          \emph{canonical norm} on $f^{*}$ to be the diagonally induced one
          from the canonical norm of $\Delta$ (in the sense of
          \Cref{def:n-ambidex}).
      \end{itemize}
    \item \emph{$n$-$\C$-ambidextrous}, if $f$ is weakly $n$-$\C$-ambidextrous
      and its canonical norm map is an equivalence.
  \end{enumerate}
A map $f\colon B \to A$ of spaces is called (weakly) $\C$-ambidextrous, if it
is (weakly) $n$-$\C$-ambidextrous for some $n\geq -2$.
\end{definition}

\begin{remark}[]
  For a map of spaces $f \colon B \to A$, \Cref{def:space.ambidex} differs from
  \Cref{def:ambidex} with respect to a Beck-Chevalley fibration $\LocSys(\C)
  \to \S$ in the following ways:
\begin{itemize}
  \item In \Cref{def:ambidex}, we did not assume that $f^{*}$ admitted a right
    adjoint -- in \Cref{def:space.ambidex}, $f$ being weakly ambidextrous
    includes $\C$ admitting $f$--limits, which by \Cref{thm:kan.extension}
    implies that $f^{*}$ admits a right adjoint $f_{*}$.

  \item In \Cref{def:ambidex} we needed that $\LocSys(\C) \to \S$ was a
    Beck-Chevalley fibration, which happens when $\C$ admits all colimits. In
    \Cref{def:space.ambidex}, $\C$ is only assumed to admit
    $f$-colimits, so $\LocSys(\C) \to \S$ is not necessarily
    a Beck-Chevalley fibration.
\end{itemize}
If $\C$ admits all limits and colimits, then
\Cref{def:ambidex} with respect to $\LocSys(\C) \to \S$  and
\Cref{def:space.ambidex} coincide.
\end{remark}

The following lemma shows how we can generalize from the situation where $\C$
only admits certain colimits to the situation where $\LocSys(\C) \to \S$ is
a Beck-Chevalley fibration.

\begin{lemma} \label{thm:ff.colimits}
  Let $\C$ be an $\infty$-category and let $f \colon B \to A$ be a weakly
  $\C$-ambidextrous map of spaces. In this situation, there exists an
  $\infty$-category $\wt \C$ and a fully faithful functor $j \colon \C \to \wt
  \C$ with the following properties:

  \begin{enumerate}
    \item The induced map $q \colon \LocSys(\wt \C) \to \S$ is a Beck-Chevalley
      fibration, and $f$ is weakly ambidextrous with respect to $q$.

    \item The Cartesian lift $\d{f^{*}}$ of $f$ in $\LocSys(\wt \C)$ admits a
      right adjoint $\d{f_{*}}$ in addition to the left adjoint $\d{f_{!}}$.

    \item Letting $j_{*} \colon \LocSys(\C) \to \LocSys(\wt \C)$ denote the
      functor induced by postcomposing, the functor $j_{*}$ sends the triple
      $f_{!} \dashv f^{*} \dashv f_{*}$ in $\LocSys(\C)$ to the triple
      $\d{f_{!}} \dashv \d{f^{*}} \dashv \d{f_{*}}$ in $\LocSys(\wt \C)$.

    \item The functor $j_{*}$ sends the wrong-way counit $\nu_{f}$ and norm map
      $\Nm_{f}$ in \smash{$\LocSys(\C)$} to the wrong-way counit $\nu_{\d{f}}$ and norm
      map $\Nm_{\d{f}}$ in \smash{$\LocSys(\wt \C)$.}
  \end{enumerate}
\end{lemma}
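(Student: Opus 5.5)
The natural choice for $\wt\C$ is a presheaf category, using the Yoneda embedding. Take $\wt \C = \P(\C) = \Fun(\C^{\op}, \S)$ (enlarging the universe if $\C$ is not small) and let $j \colon \C \to \P(\C)$ be the Yoneda embedding, which is fully faithful. The plan is to exploit three facts:
\begin{itemize}
  \item $\P(\C)$ admits all small limits and colimits;
  \item $j$ preserves all limits that exist in $\C$;
  \item $j$ does \emph{not} in general preserve colimits.
\end{itemize}
The last point is the main obstacle. To get around it, I would replace $\P(\C)$ by the full subcategory $\wt\C \subseteq \P(\C)$ of presheaves sending the relevant colimit cocones to limit cones. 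This is the standard construction of \cite[Proposition 5.3.6.2]{htt}, which freely adjoins colimits while preserving a chosen class $\mathcal{R}$ of existing colimits. Here $\mathcal{R}$ consists of the colimit diagrams in $\C$ indexed by the fibers of $f$ and of its iterated diagonals and projections. By \Cref{thm:ambidex.truncated} there are only finitely many such maps, and by \Cref{thm:projection.fiber} their fibers are fibers of $f$ or of the diagonals.

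Then $\wt\C$ is cocomplete, $j$ is fully faithful, and $j$ preserves the $\mathcal{R}$-colimits. I would next check that $\wt\C$ is closed in $\P(\C)$ under limits, since it is cut out by limit conditions. It then admits all small limits, and $j$ still preserves the existing limits. Part (1) now follows from \Cref{thm:bc.condition} (or \Cref{thm:4.3.3.lurie}): $q \colon \LocSys(\wt\C) \to \S$ is a Beck-Chevalley fibration. Part (2) follows from \Cref{thm:kan.extension}.

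For (3), I would use the pointwise Kan extension formula from the proof of \Cref{thm:kan.extension}. It gives $(f_!F)(a) \simeq \colim_{f^{-1}\{a\}} F$, and $j$ preserves these colimits. This is precisely the statement that the Beck-Chevalley square of \Cref{thm:delta.bc} for $j_*$ satisfies the Beck-Chevalley condition. Dually, $j_*$ commutes with $f_*$ because $j$ preserves limits. Since $j_*$ commutes with $f^*$, it carries units and counits to units and counits, so it respects the whole triple $f_! \dashv f^* \dashv f_*$. The same holds for the diagonal, its diagonals, and the projections $\pi_i$.

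For (4) and the weak ambidexterity in (1), I would induct on the truncation level $n$ of $f$. In the case $n=-2$, $j_*$ preserves the inverse equivalences. For the inductive step, $\nu_f$ is built from three ingredients:
\begin{itemize}
  \item $BC[\sigma]^{-1}$;
  \item the wrong way unit $\mu_\Delta$;
  \item the canonical equivalences $\pi_1\Delta \simeq \id$.
\end{itemize}
Each of these is preserved by $j_*$: the first and third by (3), and the second by the inductive hypothesis. The inductive hypothesis applies because $\mu_\Delta$ is the unit associated to the counit $\nu_\Delta$, and $\nu_\Delta$ is preserved. It also shows that $\nu_\Delta$ remains a counit of an adjunction after applying $j_*$, hence $\Delta$ remains ambidextrous. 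To justify this, I would use that $j_*$ is fully faithful on functor categories and preserves the triple, together with the triangle identities.

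Finally, $\Nm_f$ is the mate of $\nu_f$ under $f^* \dashv f_*$. Both $\nu_f$ and this adjunction are preserved by $j_*$, so $\Nm_f$ is preserved as well. The step needing the most care is checking that the subcategory $\wt\C$ is simultaneously cocomplete and closed under limits, with $j$ preserving both the needed colimits and all limits. If that fails, I would fall back on working directly in $\P(\C)$ and tracking only the colimits actually used, via \Cref{thm:less.conditions}.
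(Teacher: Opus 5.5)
Your construction is the paper's. You take $\wt\C=\P_{\R}(\C)$ from \cite[Proposition 5.3.6.2]{htt}, with $j$ preserving the $\R$-colimits and all existing limits, and then use \Cref{thm:bc.condition} and \Cref{thm:kan.extension} for (1) and (2); your smaller choice of $\R$ is fine. The gap is in the inductive step, where you pass from ``$j_*\nu_\Delta$ is a counit'' to ``$\Delta$ is ambidextrous with respect to $\LocSys(\wt\C)\to\S$''.

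The latter is a statement about \emph{all} $\wt\C$-valued local systems. For every pullback $h\colon\wt Y\to\wt X$ of $\Delta$ you need a unit $\mu_h\colon\id\to h_!h^*$ on all of $\Fun(\wt X,\wt\C)$ satisfying both triangle identities with $\nu_h$. Full faithfulness of $j_*$ together with the triangle identities in $\C$ only gives this on the essential image of $j_*$, i.e.\ on local systems that factor through $\C$. Full faithfulness lets you \emph{restrict} an adjunction from $\wt\C$ to $\C$; it does not let you extend one from $\C$ to $\wt\C$. On objects outside the image, $j_*(\mu_h)$ is not even defined. Nor can you get the norm equivalence by writing objects as colimits of objects from $\C$, since $h_*$ (limits over fibers) need not commute with colimits in $\wt\C$. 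So the weak ambidexterity in (1), and with it (4), is not established. The paper's proof uses the same construction and asserts this step without argument, so it cannot be borrowed from there.

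The missing idea is to use the universal property of $\P_{\R}(\C)$ rather than full faithfulness.
\begin{enumerate}
  \item First reduce to maps $h\colon K\to *$ with $K$ a fiber of $\Delta$ or of an iterated diagonal, using that ambidexterity of local systems can be tested fiberwise (cf.\ \cite{ambidexLurie}). This makes the path spaces $\Omega_{k,x}K$ lie among the shapes in $\R$.
  \item The data $\nu_h$, $\mu_h$ and both triangle identities involve only the colimit-preserving functors $h^*$ and $h_!$. Restriction to generators is fully faithful on colimit-preserving functors out of $\wt\C$, and also out of $\Fun(K,\wt\C)$, which is a localization of $\P(\C\times K)$.
  \item Hence $j\mu_h^{\C}$, a transformation between the $\R$-colimit-preserving functors $j$ and $j\,h_!h^*$ from $\C$ to $\wt\C$, extends uniquely to $\mu_h\colon\id_{\wt\C}\to h_!h^*$.
  \item The generators of $\Fun(K,\wt\C)$ are the objects $k_!(jc)$ for $k\in K$ and $c\in\C$. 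Since $j$ preserves colimits over the path spaces $\Omega_{k,x}K$, you have $k_!(jc)\simeq j_*(k_!c)$. So both triangle identities can be checked on the image of $j_*$, where they hold because they hold in $\C$.
\end{enumerate}

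Finally, the step you single out as delicate is routine. $\P_{\R}(\C)$ is an accessible localization of $\P(\C)$ cut out by limit conditions, so it is presentable and closed under limits in $\P(\C)$.
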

\begin{proof}
  By \cite[Proposition 5.3.6.2]{htt} it holds that for any collection $\mathcal
  R$ of colimit diagrams in $\C$, there is a fully faithful functor $j \colon
  \C \to \P_{\mathcal R}(\C)$ which preserves colimits in $\mathcal R$, and
  where $\P_{\mathcal R}(\C)$ admits all colimits. Furthemore, the functor $j$
  preserves all limits that exist in $\C$: The $\infty$-category $\P_{\R}(\C)$
  is defined as the full subcategory of $\P(\C)$ which sends diagrams in
  $\R^{\op}$ to limit diagrams in $\S$. The inclusion $i \colon \P_{\R} (\C)
  \to \P(\C)$ then preserves limits, as it is defined by requiring certain
  diagrams are send to limit diagrams, and limits commute with limits. The
  Yoneda embedding $\C \to \P(\C)$ also preserves limits, by \cite[Proposition
  5.1.3.2]{htt}, so $j$ preserves limits.

  Let $\R$ be the collection of all colimit diagrams in $\C$ indexed by small
  truncated spaces, and define $\wt \C := \P_{\R}(\C)$. In particular, for any
  weakly $\C$-ambidextrous map $f$, $\R$ includes all colimit diagrams indexed
  by fibers of $f$, as $f$ being weakly ambidextrous implies $f$ is truncated
  by \Cref{thm:ambidex.truncated}. Thus, for any weakly ambidextrous map $f$,
  the functor $j$ preserves all $f$-colimits.
  Therefore, the induced post-composition functor $j_{*} \colon \LocSys(\C) \to
  \LocSys(\wt \C)$ sends the triple $f_{!} \dashv f^{*} \dashv f_{*}$ to
  $\d{f_{!}} \dashv \d{f^{*}} \dashv \d{f_{*}}$, and likewise for $\Delta$ and
  $\pi$. This ensures that $f$ is weakly ambidextrous with respect to
  $\LocSys(\wt \C) \to \S$, and that the wrong-way counit $\nu_{f}$ and norm
  map $\Nm_{f}$ are send to $\nu_{\d{f}}$ and $\Nm_{\d{f}}$ respectively. As
  $j$ is fully faithful, any properties we show for the lifts of $f$ into
  $\LocSys({}\wt \C{})$ also hold for the lifts into $\LocSys(\C)$, by
  restricting the functor category $\Fun(X, \wt \C)$ to those functors that
  factor through $j$. Thus, (1)-(4) hold.
\end{proof}

We are now ready to prove \cite[Theorem 3.2.3]{1811}:
\begin{corollary}[{\cite[Theorem 3.2.3]{1811}}]\label{thm:1811.1.2}
  Let $f \colon B \to A$ be a map of spaces and denote by $\Delta \colon B \to
  B\times_{A}B$ the induced diagonal map. Let $F \colon \C \to \D$ be a
  functor of $\infty$-categories which preserves $\Delta$-colimits, and
  denote by $F_{*}$ the induced post-composition functor $F_{*} \colon
  \LocSys(\C) \to \LocSys(\D)$. If $f$ is weakly $\C$-ambidextrous and weakly
  $\D$-ambidextrous, then the diagram
  \[
    \begin{tikzcd}
      \d{f_{!}} F_{*} \arrow[r, "BC_{!}"] \arrow[d, "\Nm_{\d{f}}"]
      & F_{*}f_{!} \arrow[d, "\Nm_{f}"]\\
      \d{f_{*}}F_{*}
      & F_{*}f_{*} \arrow[l, "BC_{*}"']
    \end{tikzcd}
  \]
  commutes, where $BC_{!}$ and $BC_{*}$ are the Beck-Chevalley transformations
  of left and right adjoints, respectively, of the diagram
  \[
    \begin{tikzcd}
      \Fun(B, \D)
      &\Fun(A, \D) \arrow[l, "\d{f^{*}}"']
      \\\Fun(B, \C)	\arrow[u, "F_{*}"']
      &\Fun(A, \C) \arrow[l, "f^{*}"'] \arrow[u, "F_{*}"']
    \end{tikzcd}.
  \]
\end{corollary}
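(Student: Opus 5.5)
The plan is to reduce to \Cref{thm:1811.1} by enlarging both $\C$ and $\D$ with \Cref{thm:ff.colimits}. That lemma requires a fixed choice of the collection of colimit diagrams, and I will take the version with $\R$ the truncated-space-indexed colimits. This gives fully faithful functors $j_{\C} \colon \C \to \wt\C$ and $j_{\D} \colon \D \to \wt\D$ into $\infty$-categories admitting all colimits. Then $\LocSys(\wt\C)\to\S$ and $\LocSys(\wt\D)\to\S$ are Beck-Chevalley fibrations, $f$ is weakly ambidextrous for both, and the lifts of $f$ have right adjoints. Moreover, $j_{\C*}$ and $j_{\D*}$ carry $f_{!}\dashv f^{*}\dashv f_{*}$, $\nu_f$ and $\Nm_f$ to their counterparts.

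The next step is to extend $F$ to a functor $\wt F \colon \wt\C \to \wt\D$ with $\wt F j_{\C} \simeq j_{\D} F$. The natural choice is the colimit-preserving extension of $j_{\D}F$ along $j_{\C}$, using the universal property of $\P_{\R}(\C)$ (\cite[Proposition 5.3.6.2]{htt}). This universal property applies to functors carrying the diagrams in $\R$ to colimit diagrams. Here $j_{\D}$ preserves the relevant truncated colimits, and $F$ preserves $\Delta$-colimits.

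This is the step I expect to be the main obstacle. $F$ is only assumed to preserve $\Delta$-colimits, not all truncated colimits, so I would first shrink $\R$ to consist only of colimit diagrams indexed by fibers of $f$, $\Delta$, the projections, and the iterated diagonals. By \Cref{thm:less.conditions}, these are the only colimits needed. Fibers of $\Delta$ are path spaces in fibers of $f$, and the fibers of the iterated diagonals are loop spaces of those.

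I would then verify that this smaller collection still makes $f$ weakly ambidextrous for $\wt\C$. This holds because $j$ preserves exactly those colimits together with all existing limits, so the recursive construction of $\nu_f$ in \Cref{def:n-ambidex} is transported along $j_*$. Finally, I need $\wt F$ to preserve $\Delta$-colimits, and this follows since $\wt F$ preserves all colimits.

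With $\wt F$ in hand, \Cref{thm:delta.bc} applied to the map $\Delta$ shows that the square built from $\d{\Delta^{*}}$, $\Delta^{*}$ and $\wt F_{*}$ satisfies the Beck-Chevalley condition. \Cref{thm:1811.1} then gives the commutativity of the norm square for $\wt F_{*}$. It remains to restrict back along the fully faithful $j_{\C*}$ and $j_{\D*}$. The Beck-Chevalley transformations $BC_{!}$ and $BC_{*}$ for $\wt F_*$, precomposed with $j_{\C*}$, agree with $j_{\D*}$ applied to those for $F_*$, since $j_*$ preserves the units and counits of the relevant adjunctions by \Cref{thm:ff.colimits}(3). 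The norm maps are matched by \Cref{thm:ff.colimits}(4). A diagram in $\Fun(B,\D)$ whose image under the fully faithful $j_{\D*}$ commutes already commutes, which proves the corollary.
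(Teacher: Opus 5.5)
Your overall architecture matches the paper's. You enlarge $\C$ and $\D$ via \Cref{thm:ff.colimits}, extend $F$ to $\wt F$, apply \Cref{thm:delta.bc} and \Cref{thm:1811.1}, and restrict back along the fully faithful $j_{\C*}$, $j_{\D*}$.

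\textbf{The gap is in how you construct $\wt F$.} The universal property of $\P_{\R}(\C)$ (\cite[Proposition 5.3.6.2]{htt}) gives a colimit-preserving extension of $j_{\D}F$ only if $j_{\D}F$ carries \emph{every} diagram in $\R$ to a colimit diagram. Your shrunken $\R$ still contains the colimit diagrams indexed by fibers of $f$, since the fibers of $\pi_1,\pi_2$ are again fibers of $f$. It has to contain them. Without them, $j_{\C*}$ no longer carries $f_{!}$ and $\pi_{1!}$ to their counterparts in $\LocSys(\wt\C)$, and you lose parts (3)--(4) of \Cref{thm:ff.colimits}, on which your final restriction step rests. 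But $F$ is only assumed to preserve $\Delta$-colimits, so $j_{\D}F$ need not preserve $f$-colimits.

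No choice of $\R$ repairs this. Suppose $\wt F$ preserves colimits, $\wt F j_{\C}\simeq j_{\D}F$, and $j_{\C}$ preserves $f$-colimits. Then $j_{\D}F$ preserves $f$-colimits, and since the fully faithful $j_{\D}$ reflects colimits, $F$ itself would preserve $f$-colimits. That is exactly the hypothesis the statement is designed to avoid. In the motivating case ($F$ preserving $(m-1)$-finite colimits, $f$ $m$-finite), $F$ does not preserve $f$-colimits and $BC_{!}$ need not be invertible.

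\textbf{How the paper avoids this.} It does not ask for cocontinuity: $\wt F$ is simply the left Kan extension of $j_{\D}F$ along $j_{\C}$. This exists because $\wt\D$ is cocomplete, and it satisfies $\wt F j_{\C}\simeq j_{\D}F$ because $j_{\C}$ is fully faithful, with no preservation hypothesis on $F$.

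The price is that the Beck--Chevalley condition for $\Delta$ can no longer be deduced from ``$\wt F$ preserves all colimits.'' It has to come from the fact that $j_{\C}$, $F$ and $j_{\D}$ each preserve $\Delta$-colimits. That gives the condition on local systems coming from $\C$, which is all the argument of \Cref{thm:norm.square} uses once everything is restricted along $j_{\C*}$, as you intend.

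If you replace your second and third steps this way, the rest of your plan is the paper's argument. That rest consists of transporting $\nu_f$, $\Nm_f$ and the Beck--Chevalley maps along $j_{*}$, and the final reflection along the fully faithful $j_{\D*}$.
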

\begin{proof}
  By \Cref{thm:ff.colimits}, we obtain $\infty$-categories $\wt{\C}$ and
  $\wt{\D}$ which admit all colimits, and fully faithful inclusions $j_{\C}
  \colon \C \to \wt{\C}$ and $j_{\D} \colon \D \to \wt{\D}$. By left Kan
  extension of $j_{\D}\circ F$ along $j_{\C}$, which exists as $\wt \D$ admits
  colimits, we obtain a functor $\wt{F} \colon \wt{\C} \to \wt{\D}$ which fits
  into the commutative diagram
  \[
    \begin{tikzcd}
      \C	\arrow[r, "F"] \arrow[d, hookrightarrow, "j_{\C}"]
      &\D \arrow[d, hookrightarrow, "j_{\D}"]
      \\
      \wt{\C}	\arrow[r, "\wt F"]
      &\wt{\D}
    \end{tikzcd}.
  \]
  Applying \Cref{thm:delta.bc} to $\Delta$, the result now follows from
  \Cref{thm:1811.1} by replacing $\LocSys(\C)$ with $\LocSys(\wt \C)$,
  $\LocSys(\D)$ with $\LocSys(\wt \D)$, and $F$ with $\wt F$.
\end{proof}

\begin{remark}[]
    As presented in \cite[Theorem 3.2.3]{1811}, the assumptions differ from
    \Cref{thm:1811.1.2} in two ways: $F$ is assumed to preserve $(m-1)$-finite
    colimits rather than $\Delta$-colimits and $f$ is assumed to be an
    $m$-finite map of spaces in addition to being weakly ambidextrous. A
    generalization of \cite[Lemma 5.5.6.15]{htt} shows that $f$ being
    $m$-finite implies that $\Delta$ is $(m-1)$-finite. Thus, in this situation
    $F$ also preserves $\Delta$-colimits, so \cite[Theorem 3.2.3]{1811} does in
    fact follow from \Cref{thm:1811.1.2}.
\end{remark}

\subsection{The Norm for Equivariant Powers}
\label{ssub:prop_3_4_7}
We show how \Cref{thm:norm.square} implies another result from \cite{1811},
namely \cite[Proposition 3.4.7]{1811}. We start by introducing the relevant
terminology. In the following, for $p$ a prime, we denote the \emph{symmetric
group of degree $p$} by $\Sigma_{p}$, and the \emph{cyclic group of order $p$}
by $C_{p}$, which is a subgroup of $\Sigma_{p}$. Let $\C$ be a symmetric
monoidal $\infty$-category and $p$ a prime. In this situation, there is a
functor $\Theta^{p} \colon \C \to \Fun(BC_{p}, \C)$, with $BC_{p}$ the
classifying space of $C_{p}$. This functor has the property that
pre-composition of $\Theta^{p}$ with $e \colon \{*\} \to BC_{p}$ is homotopic
to the $p$-th power functor $(-)^{\otimes p} \colon \C \to \C$, which on
morphisms is given by sending $f \colon Y \to X$ to $f^{\otimes p} \colon
Y^{\otimes p} \to X^{\otimes p}$.

Given a space $A \in \S$, we define the $C_{p}$-equivariant $p$-power of $A$ as
the space $A^{p}_{hC_{p}} = (A^{p} \times EC_{p})/C_{p}$. This
category is a model of the $\infty$-categorical quotient of $A^{p}$ by $C_{p}$.
We will denote this quotient $(A^{p})_{hC_{p}}$ by $A \wr C_{p}$.

We start by stating \cite[Lemma 3.4.1]{1811}:
\begin{lemma}[]\label{thm:theta.pb.preserving}
  The functor $(-)\wr C_{p} \colon \S \to \S$ preserves pullbacks.
\end{lemma}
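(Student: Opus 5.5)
The plan is to write $(-)\wr C_{p}$ as a composite of two functors that each preserve pullbacks. The first is the $p$-fold power $(-)^{p} \colon \S \to \Fun(BC_{p}, \S)$, which sends $A$ to $A^{p}$ with the permutation action of $C_{p}$. The second is the homotopy orbits functor $(-)_{hC_{p}} \colon \Fun(BC_{p}, \S) \to \S$, which is left Kan extension along $BC_{p} \to *$. The point is that neither factor is a right adjoint into $\S$ in an obvious way, so pullback preservation has to be checked by hand for the second factor.

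The first factor preserves all limits. Indeed, $A \mapsto A^{p}$ is the right adjoint $\Fun(\{1,\dots,p\}, -)$-type functor; more precisely, it is the composite of the diagonal $\S \to \Fun(C_{p}, \S)$ with right Kan extension along $C_{p} \to BC_{p}$ (coinduction), where $C_{p}$ is viewed as a discrete space. Both are right adjoints, and limits in $\Fun(BC_{p}, \S)$ are computed pointwise. So $(A \times_{C} B)^{p} \simeq A^{p} \times_{C^{p}} B^{p}$ equivariantly.

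The main step is the second factor, since colimits do not commute with pullbacks in general. The key is \emph{descent} in $\S$. Pullbacks in $\Fun(BC_{p}, \S)$ are computed pointwise, and $(-)_{hC_{p}}$ identifies with the unstraightening functor $\Fun(BC_{p}, \S) \simeq \S_{/BC_{p}}$, $X \mapsto (X_{hC_{p}} \to BC_{p})$, followed by the forgetful functor $\S_{/BC_{p}} \to \S$. The straightening equivalence preserves limits, because it is an equivalence. The forgetful functor $\S_{/BC_{p}} \to \S$ preserves pullbacks, because it preserves contractible limits: a pullback in a slice is computed as a pullback in $\S$. Hence the composite preserves pullbacks.

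Combining the two factors gives $(A \times_{C} B)\wr C_{p} \simeq (A\wr C_{p}) \times_{C\wr C_{p}} (B \wr C_{p})$, canonically via the comparison map. I expect the only real obstacle to be justifying that the model $(A^{p}\times EC_{p})/C_{p}$ agrees with unstraightening. I would handle this by noting that the map $(A^{p}\times EC_{p})/C_{p} \to BC_{p}$ is a fiber bundle with fiber $A^{p}$, hence classifies the functor $A^{p}$ under straightening over the Kan complex $BC_{p}$.
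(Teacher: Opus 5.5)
Your proposal is correct and is essentially the paper's argument: the paper also writes $(-)\wr C_{p}$ as the composite of right Kan extension $e_{*}\colon \S \to \Fun(BC_{p},\S)$ along the basepoint $e\colon \{*\}\to BC_{p}$ (a right adjoint, sending $A$ to $A^{p}$ with the permutation action) with $\Fun(BC_{p},\S)\simeq \S_{/BC_{p}} \to \S$, and this last functor preserves pullbacks because limits of contractible shape in a slice are computed in $\S$. One small slip: right Kan extension along $C_{p}\to BC_{p}$ composed with the diagonal $\S\to\Fun(C_{p},\S)$ gives $p$ copies of the coinduced object (underlying space $A^{p^{2}}$), not $A^{p}$; the correct first factor is just $e_{*}$, and your pointwise argument (evaluation at the basepoint is conservative and preserves limits) already proves that step on its own.
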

\begin{proof}
  Letting $e \colon \{*\} \to BC_{p}$ denote a choice of base point, the
  functor $(-)\wr C_{p}$ can be identified with the composite
  \[
    \S \xrightarrow{e_{*}} \Fun(BC_{p}, \S) \simeq \S_{/BC_{p}}
    \xrightarrow{\pi} \S.
  \]
  The functor $e_{*}$ is right adjoint to the pre-composition functor $e^{*}$
  and hence preserves all limits. The canonical projection $\pi$ is a right
  fibration by the dual of \cite[Corollary 2.1.2.2]{htt}. Thus, by the dual of
  \cite[Proposition 4.4.2.9]{htt} it preserves limits of contractible shape
  which includes pullbacks.
\end{proof}

\begin{definition}[]
  Let $\C$ be a symmetric monoidal $\infty$-category. For each space $A \in \S$
  we define the functor
  \[
    \Theta^{p}_{A} \colon \Fun(A, \C) \to \Fun(A \wr C_{p}, \C)
  \]
  to be the composition of $(-)^{p}_{hC_{p}}$ with
  \[
    (\C^{p})_{hC_{p}} \to (\C^{p})_{h\Sigma_{p}} \xrightarrow{\otimes} \C.
  \]
  If the space $A$ is clear from the context we supress the subscript $A$. Note
  that this definition is natural in $A$, that is, $\Theta^{p}$ is a natural
  transformation from $\Fun(-, \C)$ to $\Fun( (-)\wr C_{p}, \C)$.
\end{definition}

To prove \Cref{thm:3.4.7}, we will need the following lemma.
\begin{lemma}[] \label{thm:theta.bc}
    Let $f \colon B \to A$ be a map of spaces and let $(\C, \otimes, \boldone)$
    be a symmetric monoidal category that admits all $f$-colimits. If $\otimes$
    distributes over all $f$-colimits, then the square
    \[
        \begin{tikzcd}
            \Fun(B \wr C_{p}, \C)
            &\Fun(A \wr \C_{p}, \C) \arrow[l, "(f\wr C_{p})^{*}"']
            \\\Fun(B, \C)	\arrow[u, "\Theta^{p}"']
            &\Fun(A, \C) \arrow[l, "f^{*}"'] \arrow[u, "\Theta^{p}"']
        \end{tikzcd}
    \]
    satisfies the Beck-Chevallley condition.
\end{lemma}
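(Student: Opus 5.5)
We have to show that the Beck-Chevalley transformation
\[
  BC_{!} \colon (f\wr C_{p})_{!}\,\Theta^{p} \to \Theta^{p} f_{!}
\]
is an equivalence of functors $\Fun(B,\C) \to \Fun(A\wr C_{p},\C)$. The left adjoints exist because $\C$ admits $f$-colimits, by \Cref{thm:kan.extension}. The fibers of $f\wr C_{p}$ are products of fibers of $f$, and $\otimes$ distributing over $f$-colimits lets us build the colimits we need out of those. Equivalences of natural transformations are detected pointwise, so it suffices to check $BC_{!}$ after evaluating at each point of $A\wr C_{p}$. Since $A\wr C_{p} = (A^{p})_{hC_{p}}$, every point comes from a point $\underline a = (a_1,\dots,a_p)$ of $A^{p}$ under the quotient map $q_A \colon A^{p} \to A\wr C_{p}$. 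So we may compose with $q_A^{*}$, or further with the inclusion of the point $\underline a$.

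\textbf{Step 1: reduce to the fiber of $f\wr C_{p}$.} The square
\[
  \begin{tikzcd}
    B^{p} \arrow[r,"f^{p}"] \arrow[d] & A^{p} \arrow[d,"q_A"]\\
    B\wr C_{p} \arrow[r,"f\wr C_{p}"] & A\wr C_{p}
  \end{tikzcd}
\]
is a pullback, since homotopy orbits by $C_{p}$ are computed fiberwise over $BC_{p}$ and $B^{p}\to B\wr C_p$ is the pullback of $\ast \to BC_{p}$. By \Cref{thm:bc.condition} this square satisfies the Beck-Chevalley condition, using that $\C$ admits $(f\wr C_{p})$-colimits, whose fibers are the $\prod_i f^{-1}\{a_i\}$; these colimits are iterated $f$-colimits.

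By vertical pasting, \Cref{thm:bc.pasting.v}, we can then replace the square in the statement by its composite with restriction along $q_A$ and $q_B$. Under these restrictions $\Theta^{p}$ becomes the external $p$-fold tensor power
\[
  \Fun(A,\C)\to\Fun(A^{p},\C), \qquad \L \mapsto \bigl(\underline a \mapsto \L(a_1)\otimes\cdots\otimes \L(a_p)\bigr),
\]
because $\Theta^{p}$ composed with the restriction to $e \colon \ast \to BC_{p}$ is the $p$-th power. The problem therefore reduces to showing that the Beck-Chevalley map of the square
\[
  \begin{tikzcd}
    \Fun(B^{p},\C) & \Fun(A^{p},\C) \arrow[l,"(f^{p})^{*}"']\\
    \Fun(B,\C) \arrow[u,"\boxtimes^{p}"'] & \Fun(A,\C) \arrow[l,"f^{*}"'] \arrow[u,"\boxtimes^{p}"']
  \end{tikzcd}
\]
is an equivalence.

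\textbf{Step 2: pointwise computation.} Evaluate at $\underline a$ using the pointwise Kan extension formula from the proof of \Cref{thm:kan.extension}. The map becomes the canonical comparison
\[
  \colim_{\prod_i f^{-1}\{a_i\}} \L(b_1)\otimes\cdots\otimes \L(b_p) \to \bigotimes_i \colim_{f^{-1}\{a_i\}} \L.
\]
Distributivity of $\otimes$ over $f$-colimits in each variable separately shows this is an equivalence: we commute the colimit past one tensor factor at a time, using that a colimit over a product of spaces is an iterated colimit.

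\textbf{Main obstacle.} The hard part is identifying the abstract $BC_{!}$, restricted via $q_A$, with this canonical comparison map. That identification requires the naturality of $\Theta^{p}$ to be compatible with the identification of $q_A^{*}\Theta^{p}$ with $\boxtimes^{p}$, coherently in $A$. I would obtain it by writing $\Theta^{p}$ as a composite of natural transformations, namely the $p$-fold product $\Fun(-,\C)\to\Fun((-)^{p},\C^{p})$, then passing to $C_{p}$-orbits, then applying $\otimes$. I would then paste the Beck-Chevalley squares for each factor via \Cref{thm:bc.pasting.v}. Each factor is a Beck-Chevalley square: the product step by a pointwise Kan extension argument, and the tensor step by the distributivity hypothesis.
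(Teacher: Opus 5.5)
Your reduction is sound. The paper gives no argument of its own here and only cites \cite[Lemma 3.4.6]{1811}, so there is nothing to compare against. Since $q_A$ is surjective on $\pi_0$, $q_A^{*}$ is conservative. The square $B^{p}\to A^{p}$ over $B\wr C_p\to A\wr C_p$ is a pullback, hence Beck--Chevalley by \Cref{thm:bc.condition}. Vertical pasting and two-out-of-three then reduce everything to the square for $\boxtimes^{p}$, where the pointwise comparison is an equivalence by distributivity, one variable at a time.

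The gap is in your plan for the step you yourself call the crux. You propose to factor $\Theta^{p}$ as $\L\mapsto\L^{\times p}\in\Fun((-)^{p},\C^{p})$, then $C_p$-orbits, then $\otimes$, and to paste Beck--Chevalley squares for the factors. This fails because neither the product step nor the tensor step satisfies the Beck--Chevalley condition. Take $p=2$, $f\colon S^{0}\to\ast$ and $\L=(L_0,L_1)$.
\begin{itemize}
\item Product step: each coordinate of $(f^{2})_{!}(\L\circ\mathrm{pr}_1,\L\circ\mathrm{pr}_2)$ is a colimit over $S^{0}\times S^{0}$ of a diagram depending on one coordinate only. It equals $(L_0\sqcup L_1)\sqcup(L_0\sqcup L_1)$, not $f_{!}\L=L_0\sqcup L_1$. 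In general you pick up a copower by $\prod_{j\neq i}f^{-1}\{a_j\}$.
\item Tensor step: post-composition with $\otimes\colon\C^{2}\to\C$ would require $\coprod_{s}G_1(s)\otimes G_2(s)\to\bigl(\coprod_{s}G_1(s)\bigr)\otimes\bigl(\coprod_{s}G_2(s)\bigr)$ to be an equivalence for $G\colon S^{0}\times S^{0}\to\C^{2}$. That is four summands against sixteen. Distributivity in each variable separately says nothing about such diagonal colimits.
\end{itemize}
The two defects cancel only in the composite, so no pasting argument over these factors can succeed.

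A factorization that does work is $\boxtimes^{p}=\boxtimes\circ\delta$. Here $\delta\colon\Fun(A,\C)\to\Fun(A,\C)^{p}$ is the diagonal, $\boxtimes\colon\Fun(A,\C)^{p}\to\Fun(A^{p},\C)$ is the external product, and $(f^{*})^{\times p}$ is the middle horizontal arrow.
\begin{itemize}
\item The diagonal square has Beck--Chevalley map $f_{!}\to f_{!}f^{*}f_{!}\to f_{!}$ in each component, which is the identity by a triangle identity.
\item For the external product, split $(f^{*})^{\times p}$ and $(f^{p})^{*}$ into $p$ steps that each change one coordinate, and apply \Cref{thm:bc.pasting}. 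Fix and curry away the other coordinates, and let $N$ be their external product. Each step is then post-composition with $X\mapsto X\otimes N$, a functor $\C\to\Fun(B^{k}\times A^{p-k-1},\C)$ that preserves $f$-colimits by distributivity. It is therefore Beck--Chevalley by \Cref{thm:delta.bc}.
\end{itemize}
Alternatively, invoke directly the standard fact that the Beck--Chevalley map of left Kan extensions is pointwise the canonical colimit comparison; that is exactly what your Step~2 uses. Either way, you still need the routine identification of the restricted square, together with its commutativity datum, with the $\boxtimes^{p}$-square. This follows from the definition of $\Theta^{p}$ and the naturality of $q$.
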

\begin{proof}
    Omitted, see \cite[Lemma 3.4.6]{1811}.
\end{proof}

We are now ready to prove \cite[Proposition 3.4.7]{1811}.
\begin{proposition}[{\cite[Proposition 3.4.7]{1811}}] \label{thm:3.4.7}
  Let a symmetric monoidal $\infty$-category $(\C, \otimes, \boldone)$ be
  given, and let $f \colon B \to A$ be a map of spaces. If $f$ is weakly
  $\C$-ambidextrous and $\otimes$ distributes over $\Delta$-colimits, then the
  the diagram
  \[
    \begin{tikzcd}
    (f \wr C_{p})_{!} \Theta^{p} \arrow[r, "BC_{!}"] \arrow[d, "\Nm_{(f \wr
    C_{p})}"]
    & \Theta^{p} f_{!} \arrow[d, "\Nm_{f}"]
    \\ (f \wr C_{p})_{*} \Theta^{p}
    & f_{*} \Theta^{p} \arrow[l, "BC_{*}"']
    \end{tikzcd}
  \]
  commutes, where $BC_{!}$ and $BC_{*}$ are the Beck-Chevalley transformations
  of left and right adjoints, respectively, of the diagram
  \[
    \begin{tikzcd}
      \Fun(B\wr C_{p}, \C)
      &\Fun(A \wr C_{p}, \C) \arrow[l, "(f\wr C_{p})^{*}"']
      \\\Fun(B, \C)	\arrow[u, "\Theta^{p}"']
      &\Fun(A, \C) \arrow[l, "f^{*}"'] \arrow[u, "\Theta^{p}"']
    \end{tikzcd}.
  \]
\end{proposition}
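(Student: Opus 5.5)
The plan is to realize the equivariant power construction as a Cartesian-edge-preserving functor between two Beck--Chevalley fibrations over $\S$, and then apply \Cref{thm:norm.square}. First we replace $\C$ by a cocomplete ambient category, as in the proof of \Cref{thm:1811.1.2}. Apply \Cref{thm:ff.colimits} to $f$ (and to $f\wr C_{p}$, enlarging the class $\R$ so that it contains the colimit diagrams indexed by fibers of both). This gives a fully faithful $j \colon \C \to \wt\C$ with $\wt\C$ cocomplete, $j$ preserving the relevant colimits and all existing limits, and $j_{*}$ carrying the adjoint triples, wrong way counits and norm maps to their counterparts. We also need a symmetric monoidal structure on $\wt\C$ for which $j$ is symmetric monoidal and $\otimes$ distributes over the relevant colimits. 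For this we take the Day-convolution-type structure on $\P_{\R}(\C)$ from \cite[Proposition 4.8.1.10]{ha}; it requires $\otimes$ on $\C$ to preserve the colimits in $\R$ separately in each variable, which is why we restrict $\R$ to colimits over which $\otimes$ distributes. This replacement is routine but fiddly, and I expect it to carry most of the technical weight.

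Next we set up the fibrations. Let $q \colon \LocSys(\wt\C) \to \S$, a Beck--Chevalley fibration by \Cref{thm:4.3.3.lurie}. By \Cref{thm:theta.pb.preserving}, $(-)\wr C_{p}$ preserves pullbacks, so \Cref{thm:bc.pullback} shows that the base change $p \colon \LocSys(\wt\C)_{(-)\wr C_{p}} \to \S$ is a Beck--Chevalley fibration. Its fiber over $A$ is $\Fun(A\wr C_{p},\wt\C)$, and its Cartesian lift of $f$ is $(f\wr C_{p})^{*}$. The natural transformation $\Theta^{p} \colon \Fun(-,\wt\C) \Rightarrow \Fun((-)\wr C_{p},\wt\C)$ unstraightens to a functor $\LocSys(\wt\C) \to \LocSys(\wt\C)_{(-)\wr C_{p}}$ over $\S$ that preserves Cartesian edges.

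We then check the hypotheses of \Cref{thm:norm.square} for this functor and $f$. Since $f$ is weakly ambidextrous for $q$, it is truncated by \Cref{thm:ambidex.truncated}. The map $f\wr C_{p}$ is then truncated as well (its fibers are products of fibers of $f$), and it is weakly $\C$-ambidextrous by the corresponding statement in \cite{1811}. By \Cref{thm:base.change} (1), $f$ is therefore weakly ambidextrous for $p$, with wrong way counit $\nu_{f\wr C_{p}}$. By \Cref{thm:base.change} (2) the right adjoints exist and the norm for $p$ is $\Nm_{f\wr C_{p}}$. The Beck--Chevalley condition for the $\Delta$-square
\[
\begin{tikzcd}
\Fun(B\wr C_{p},\wt\C) & \Fun((B\times_{A}B)\wr C_{p},\wt\C) \arrow[l, "(\Delta\wr C_{p})^{*}"'] \\
\Fun(B,\wt\C) \arrow[u, "\Theta^{p}"'] & \Fun(B\times_{A}B,\wt\C) \arrow[l, "\Delta^{*}"'] \arrow[u, "\Theta^{p}"']
\end{tikzcd}
\]
follows from \Cref{thm:theta.bc} applied to $\Delta$, since $\otimes$ distributes over $\Delta$-colimits by hypothesis (and still does after passing to $\wt\C$).

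Finally, \Cref{thm:norm.square} yields the commutative norm square in $\wt\C$. Restricting along $j$ via \Cref{thm:ff.colimits} (3)--(4), and using that $j$ is symmetric monoidal so that $j\Theta^{p}\simeq\Theta^{p}j$, gives the asserted square for $\C$. The main obstacle is the compatibility of the enlargement $\wt\C$ with the symmetric monoidal structure; if it becomes awkward, the fallback is \Cref{thm:less.conditions}, running \Cref{thm:norm.square} directly with $\LocSys(\C)$ by checking only the finitely many Beck--Chevalley squares it actually uses.
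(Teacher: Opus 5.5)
Your skeleton matches the paper's: reduce to a cocomplete target via \Cref{thm:ff.colimits}, base change along $(-)\wr C_{p}$, unstraighten $\Theta^{p}$, use \Cref{thm:base.change} and \Cref{thm:theta.bc} for $\Delta$, and apply \Cref{thm:norm.square}. The paper handles the reduction in one line (``by \Cref{thm:ff.colimits} we can assume that $\C$ admits all colimits''). It says nothing about a symmetric monoidal structure on $\wt\C$, nor about why $f\wr C_{p}$ is weakly ambidextrous, which its use of \Cref{thm:base.change} (1) requires. You are right to raise both points.

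The gap is in your fix for the first one. To restrict the square back to $\C$ you need \Cref{thm:ff.colimits} (3)--(4). So $j$ must preserve $f$-colimits and $(f\wr C_{p})$-colimits, and $\R$ must contain them. But the Day-convolution structure on $\P_{\R}(\C)$ requires $\otimes$ to preserve every colimit in $\R$ in each variable. With a target whose $\otimes$ is cocontinuous in each variable, no such $j$ can exist unless $\otimes$ already distributes over $f$-colimits. Indeed, if $j$ is fully faithful, symmetric monoidal and preserves $f$-colimits, then
$j(X\otimes\colim D)\simeq jX\otimes\colim jD\simeq\colim(jX\otimes jD)\simeq j(\colim(X\otimes D))$.
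The hypothesis gives distributivity only over $\Delta$-colimits, and this is genuinely weaker. Take $\C=\Sp^{\op}$ with the smash product and $f\colon BC_{p}\to *$. Then $f$ is weakly $\C$-ambidextrous, and the $\Delta$-colimits are finite biproducts, over which $\otimes$ distributes. However, the $f$-colimits are homotopy fixed points in $\Sp$, which do not commute with smashing. So you cannot in general choose $\R$ both to contain the fibers of $f$ and $f\wr C_{p}$ and to consist only of colimits over which $\otimes$ distributes.

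Your fallback is the correct repair, and it makes the cocompletion unnecessary. Work with $\LocSys(\C)$ and its base change along $(-)\wr C_{p}$ via \Cref{thm:less.conditions}:
\begin{itemize}
  \item Since $f$ is weakly $\C$-ambidextrous, $\C$ has $f$-, $\Delta$- and iterated-diagonal (co)limits.
  \item Since $(-)\wr C_{p}$ preserves pullbacks, the fibers of $f\wr C_{p}$ and of its iterated diagonals are $p$-fold products of the corresponding fibers for $f$. Those (co)limits therefore exist by iterating.
  \item So all needed adjoints exist by \Cref{thm:kan.extension}, and the relevant pullback squares satisfy the Beck--Chevalley condition by \Cref{thm:bc.condition}, which only uses these colimits.
  \item \Cref{thm:theta.bc} applies to $\Delta$ directly in $\C$ under the stated hypothesis.
\end{itemize}
What remains is weak ambidexterity of $f\wr C_{p}$ with respect to $\LocSys(\C)$. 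Prove this rather than cite it. One route is to check it fiberwise, since the diagonal of $f\wr C_{p}$ is $\Delta\wr C_{p}$, whose fibers are $p$-fold products of fibers of $\Delta$.
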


\begin{proof}
  By \Cref{thm:ff.colimits}, we can assume that $\C$ admits all colimits, so $q
  \colon \LocSys(\C) \to \S$ is a Beck-Chevalley fibration, and $f$ is weakly
  ambidextrous with respect to $q$. By \Cref{thm:theta.pb.preserving}, the
  functor $(-)\wr C_{p} \colon \S \to \S$ preserves pullbacks.  Denote by $p
  \colon \LocSys(\C)_{(-)\wr C_{p}} \to \S$ the base change of $\LocSys(\C) \to
  \S$ along $(-)\wr C_{p}$, so $p$ has the property that the (co)Cartesian
  lifts of $f$ by $p$ are canonically equivalent to the (co)Cartesian lifts of
  the image $f\wr C_{p}$ by $q$. As $f$ is truncated, $f$ is weakly
  ambidextrous with respect to $p$ by \Cref{thm:base.change} (1). As described
  in \Cref{sec:straightening}, by unstraightening,
  the natural transformation $\Theta^{p} \colon \Fun(-, \C) \Longrightarrow
  \Fun((-)\wr C_{p}, \C)$ corresponds to a functor $\Theta^{p} \colon \LocSys(\C)
  \to \LocSys(\C)_{(-) \wr C_{p}}$ which preserves Cartesian edges. By applying
  \Cref{thm:theta.bc} to the diagonal map $\Delta \colon B \to B \times_{A}B$
  induced by $f$, we obtain that the square
  \[
      \begin{tikzcd}
          \Fun(B\wr C_{p}, \C)
          &\Fun( \left(B\times_{A}B \right) \wr C_{p}, \C) \arrow[l, "(\Delta \wr C_{p})^{*}"']
          \\\Fun(B, \C)	\arrow[u, "\Theta^{p}"']
          &\Fun(B\times_{A}B, \C) \arrow[l, "\Delta^{*}"'] \arrow[u, "\Theta^{p}"']
      \end{tikzcd}
  \]
  satisfies the Beck-Chevalley condition.
  The result now follows from \Cref{thm:norm.square}.
\end{proof}

\begin{remark}[]
  As presented in \cite[Proposition 3.4.7]{1811}, the assumptions differ from
  \Cref{thm:3.4.7} in two ways: $(\C,\otimes, \boldone)$ was assumed to be an
  $m$-semiadditively symmetric monoidal $\infty$-category, meaning that all
  $m$-finite maps are weakly $\C$-ambidextrous and that $\otimes$ distributes
  over $m$-finite colimits, and $f\colon B \to A$ was assumed to be an
  $m$-finite map of spaces. As $f$ being $m$-finite implies that the diagonal
  $\Delta$ is $(m-1)$-finite (this follows from \cite[Lemma 5.5.6.15]{htt}),
  $\otimes$ does in fact distribute over $\Delta$-colimits, so
  \cite[Proposition 3.4.7]{1811} follows from \Cref{thm:3.4.7}.
\end{remark}